\newtheorem{theorem}{Theorem}[section]
\newtheorem{lemma}[theorem]{Lemma}
\newtheorem{propo}[theorem]{Proposition}
\newtheorem{defin}[theorem]{Definition}
\newtheorem{corol}[theorem]{Corollary}
\newtheorem{remark}[theorem]{Remark}
\begin{document}

\author{Bingyu Xia}
\address{The Ohio State University, Department of Mathematics, 231 W 18th Avenue, Columbus, OH 43210-1174, USA}
\email{xia.128@osu.edu}

\keywords{Bridgeland stability conditions, Derived categories, Moduli Spaces, Twisted cubics}

\subjclass[2010]{14F05 (Primary); 14H45, 14J60, 18E30 (Secondary)}

\title{Hilbert scheme of twisted cubics as simple wall-crossing}

\begin{abstract}
We study the Hilbert scheme of twisted cubics in the three-dimensional projective space by using Bridgeland stability conditions. We use wall-crossing techniques to describe its geometric structure and singularities, which reproves the classical result of Piene and Schlessinger.
\end{abstract}

\maketitle

\section{Introduction}
In this paper, we study the birational transformations induced by simple wall-crossings in the space $\mathrm{Stab}(\mathbb{P}^{3})$ of Bridgeland stability conditions on $\mathbb{P}^{3}$ and show how they naturally lead to a new proof of the main result of \cite{PS85,EPS87}.
The notion of stability condition was introduced by Bridgeland in \cite{Bri07}.
It provides a new viewpoint on the study of moduli spaces of sheaves and complexes.
Simple wall-crossings are the most well-behaved wall-crossings in the space of stability conditions.
They are controlled by the extensions of a family of pairs of stable destabilizing objects: they contract a locus of extensions in the moduli of one side of the wall, then produce a new locus of reverse extensions in the moduli of the other side of the wall. The precise definition of a simple wall-crossing is given in Definition \ref{lihai}.
In some examples, the expectation is that a simple wall-crossing will blow up the old moduli space and add a new component that intersects the blow-up transversely along the exceptional locus.
In this paper, we will prove this is indeed the case for the Hilbert scheme of twisted cubics. The main theorem is the following:

\medskip
\noindent\textbf{Main Theorem.} (See also Theorem \ref{ritian}, Theorem \ref{zhu1} and Theorem \ref{zhu2}) \textit{There is a path $\gamma$ in $\mathrm{Stab}(\mathbb{P}^{3})$ that crosses three walls and four chambers for a fixed Chern character $v=\mathrm{ch}(\mathcal{I}_{C})$, where $\mathcal{I_{C}}$ is the ideal sheaf of a twisted cubic $C$. If we list the moduli space of semistable objects in each chamber with respect to the path $\gamma$, we have:}

\textit{$(1)$ The empty space $\emptyset$;}

\textit{$(2)$ A smooth projective integral variety $\mathbf{M}_{1}$ of dimension $12$;}

\textit{$(3)$ A projective variety $\mathbf{M}_{2}$ with two irreducible components $\mathbf{B}$ and $\mathbf{P}$, where $\mathbf{P}$ is a $\mathbb{P}^{9}$-bundle over $\mathbb{P}^{3}\times(\mathbb{P}^{3})^{*}$ and $\mathbf{B}$ is the blow-up of $\mathbf{M}_{1}$ along a $5$-dimensional smooth center. The two components of $\mathbf{M}_{2}$ intersect transversally along the exceptional divisor of $\mathbf{B}$;}

\textit{$(4)$ The Hilbert scheme of twisted cubics $\mathbf{M}_{3}$. $\mathbf{M}_{3}$ is a blow-up of $\mathbf{M}_{2}$ along a $5$-dimensional smooth center contained in $\mathbf{P}\setminus\mathbf{B}$.}
\medskip

Among the above three wall-crossings, the second one and the third one are simple. We are going to study them in great details in Section $4$ and $5$.

In \cite{SchB15}, Schmidt also studied certain wall-crossings on $\mathbb{P}^{3}$. We followed his construction of the path $\gamma$ in the Main Theorem. We will also follow his construction of moduli space $\mathbf{M}_{1}$ by using quiver representations in Section $3$. For the second wall-crossing and the third wall-crossing, Schmidt reinterpreted the main result of \cite{PS85,EPS87} in the new setting of Bridgeland stability. The method of Piene and Schlessinger to study the geometric structure of the Hilbert scheme of twisted cubics is based on deformation theory of ideals. They first used a comparison theorem to show that the Hilbert scheme of twisted cubics is isomorphic to the moduli space of ideals of twisted cubics, and then they use the $\mathbf{P}\mathrm{GL}(4)$-action to reduce tangent space computations to some special ideals. Finally, they exhibited a basis of deformations of these special ideals and computed the versal deformations.

We will use a different method to directly study the second wall-crossing and the third wall-crossing without referring to \cite{PS85,EPS87}. In Section $4$, we first identify the locus $H$ in $\mathbf{M}_{1}$ that is going to be modified after the second wall-crossing. This is Proposition \ref{zhongyao} $(1)$. Then we construct two embeddings of the irreducible components into $\mathbf{M}_{2}$: one is from the projective bundle parametrizing reverse extensions of the family of pairs of destabilizing objects, the other is from the blow-up of $\mathbf{M}_{1}$ along $H$. This is the content of Proposition \ref{zhongyao} $(2)$ and Proposition \ref{ruyao} $(2)$. By definition of a simple wall-crossing, the union of the images of the two embeddings is $\mathbf{M}_{2}$, so $\mathbf{M}_{2}$ only has two irreducible components. With the help of some $\mathrm{Ext}$ computations, we show that the intersection of the two images is the exceptional divisor of the blow-up, and the two embeddings are isomorphisms outside it. This is Remark \ref{fadian}, Remark \ref{mafuyu} (1) and Proposition \ref{ruyao} $(1)$. Finally we study the deformation theory of complexes on the intersection and prove that the two irreducible components of $\mathbf{M}_{2}$ intersect transversely. This is Proposition \ref{gan}. In Section $5$, again we first identify the locus $H'$ that is going to be modified after the third wall-crossing and find that it is solely contained in one irreducible component of $\mathbf{M}_{2}$. Then we construct an isomorphism between the blow-up of $\mathbf{M}_{2}$ along $H'$ and $\mathbf{M}_{3}$, where the latter is the Hilbert scheme of twisted cubics. This is Theorem \ref{haoxiang}. As a consequence, this reproves the main result of \cite{PS85, EPS87} on the geometric structures of the Hilbert scheme of twisted cubics by using stability and wall-crossing techniques. The advantages of this is that we can get rid of using the equations of special ideals. It will be easier sometimes to generalize our approach, especially when the equations are complicated or unavailable.

The Hilbert scheme of twisted cubics is a first nontrivial example where our wall-crossing method applies, and we hope it could be applied in more general cases. Some related works in which our method may apply are: \cite{GHS16} about the moduli of elliptic quartics in $\mathbb{P}^{3}$, \cite{LLMS16} about the moduli of twisted cubics in a cubic fourfold and \cite{Tra16} about the moduli space of certain point-like objects on a surface.

\medskip
\noindent\textbf{Notations.}\begin{align}
\mathrm{Coh}(\mathbb{P}^{3}) &\quad \text{abelian category of coherent sheaves on $\mathbb{P}^{3}$},\nonumber\\
\mathrm{D}^{\mathrm{b}}(\mathbb{P}^{3}) &\quad \text{bounded derived category of $\mathrm{Coh}(\mathbb{P}^{3})$},\nonumber\\
\mathcal{T}_{X} &\quad \text{tangent bundle of a smooth projective variety $X$}\nonumber\\
T_{X,x} &\quad \text{tangent space of $X$ at a point $x$},\nonumber\\
T_{f,x} &\quad \text{tangent map $T_{X,x}\longrightarrow T_{Z,f(x)}$ of a morphism $f:X\longrightarrow Z$},\nonumber\\
\mathcal{N}_{Y/X} &\quad \text{normal bundle of a smooth subvariety $Y$ in $X$},\nonumber\\
N_{Y/X,y} &\quad \text{normal space of $Y$ in $X$ at a point $y$},\nonumber\\
\mathscr{E}xt_{f}^{1}(\mathcal{F},\mathcal{G}) &\quad \text{relative $\mathrm{Ext}^{1}$ sheaf of $\mathcal{F}$ and $\mathcal{G}$ with respect to a morphism $f$},\nonumber\\
\mathscr{T}or^{1}(\mathcal{F},\mathcal{G}) &\quad \text{$\mathrm{Tor}^{1}$ sheaf of $\mathcal{F}$ and $\mathcal{G}$}.\nonumber\\
\mathrm{ch}(E) &\quad \text{Chern charater of an object $E\in\mathrm{D}^{\mathrm{b}}(\mathbb{P}^{3})$}\nonumber\\
c_{i}(E) &\quad \text{$i$-th Chern class of an object $E\in\mathrm{D}^{\mathrm{b}}(\mathbb{P}^{3})$}\nonumber
\end{align}
\noindent\textbf{Acknowledgements.} I would like to thank David Anderson, Yinbang Lin, Giulia Sacc\`{a}, Benjamin Schmidt and Xiaolei Zhao for useful discussions and suggestions. I am in great debt to my advisor Emanuele Macr\`{i}, who introduces me to this topic and gives me advice. I express my  gratitude to the mathematics department of Ohio State University for helping me handle the situation after my advisor moved. I would also like to thank Northeastern University at which most of this paper has been written for their hospitality. The research is partially supported by NSF grants DMS-1302730 and DMS-1523496 (PI Emanuele Macr\`{i}) and a Graduate Special Assignment of the mathematics department of Ohio State University.

\section{A Brief Review on Bridgeland Stability Conditions}
In this section, we review how to construct Bridgeland stability conditions on $\mathbb{P}^{3}$ and define the notion of a simple wall-crossing.
\begin{defin}
A stability condition $(Z,\mathcal{P})$ on $\mathrm{D}^{\mathrm{b}}(\mathbb{P}^{3})$ consists of a group homomorphism $Z:K(\mathrm{D}^{\mathrm{b}}(\mathbb{P}^{3}))\longrightarrow\mathbb{C}$ called central charge, and full additive subcategories $\mathcal{P}(\phi)\subset\mathrm{D}^{\mathrm{b}}(\mathbb{P}^{3})$ for each $\phi\in\mathbb{R}$, satisfying the following axioms:

$(1)$ if $E\in\mathcal{P}(\phi)$ then $Z(E)=m(E)\mathrm{exp}(i\pi\phi)$ for some $m(E)\in\mathbb{R}_{>0}$,

$(2)$ for all $\phi\in\mathbb{R}$, $\mathcal{P}(\phi+1)=\mathcal{P}(\phi)[1]$,

$(3)$ if $\phi_{1}>\phi_{2}$ and $A_{j}\in\mathcal{P}_{j}$, then $\mathrm{Hom}_{\mathrm{D}^{\mathrm{b}}(\mathbb{P}^{3})}(A_{1}, A_{2})=0$,

$(4)$ for each nonzero object $E\in\mathrm{D}^{\mathrm{b}}(\mathbb{P}^{3})$ there are a finite sequence of real numbers
\begin{equation*}
\phi_{1}>\phi_{2}>\cdots>\phi_{n}
\end{equation*}
and a collection of triangles
\begin{displaymath}
\xymatrix {0=E_{0} \ar[r] &E_{1} \ar[d] \ar[r] &E_{2} \ar[d] \ar[r] &\cdots \ar[r] &E_{n-1} \ar[r] &E_{n}=E,\ar[d]\\
& \ar@{.>}[ul] A_{1} & \ar@{.>}[ul] A_{2} & & & \ar@{.>}[ul] A_{n}}
\end{displaymath}
with $A_{j}\in\mathcal{P}(\phi_{j})$ for all $j$.
\end{defin}
If we denote the set of all locally-finite stability conditions by $\mathrm{Stab}(\mathbb{P}^{3})$, then [Theorem 1.2, Bri07] tells us that there is a natural topology on $\mathrm{Stab}(\mathbb{P}^{3})$ making it a complex manifold.

By [Bri07, Proposition $5.3$], to give a stability condition on the bounded derived category of $\mathbb{P}^{3}$, it is equivalent to giving a stability function on a heart of a bounded $t$-structure satisfying the Harder-Narasimhan property. [Tod09, Lemma 2.7] shows this is not possible for the standard heart $\mathrm{Coh}(\mathbb{P}^{3})$. In \cite{BMT14}, stability conditions are constructed on a so-called double tilt $\mathscr{A}^{\alpha,\beta}$ of the standard heart.

We identify the cohomology $\mathrm{H}^{*}(\mathbb{P}^{3},\mathbb{Q})$ with $\mathbb{Q}^{4}$ with respect to the obvious chose of basis. Let $(\alpha,\beta)\in\mathbb{R}_{>0}\times\mathbb{R}$. We define the twisted slope function for $E\in\mathrm{Coh}(\mathbb{P}^{3})$ to be
\begin{equation}
\mu_{\beta}\left(E\right)=\frac{c_{1}\left(E\right)-\beta c_{0}\left(E\right)}{c_{0}\left(E\right)}\nonumber
\end{equation} if $c_{0}(E)\neq0$, and otherwise we let $\mu_{\beta}=+\infty$. Then we set
\begin{align}
\mathcal{T}_{\beta}&=\{E\in\mathrm{Coh}(\mathbb{P}^{3}):\text{any quotient sheaf $G$ of $E$ satisfies }\mu_{\beta}\left(G\right)>0\}\nonumber\\
\mathcal{F}_{\beta}&=\{E\in\mathrm{Coh}(\mathbb{P}^{3}):\text{any subsheaf $F$ of $E$ satisfies }\mu_{\beta}\left(F\right)\leqslant0\}.\nonumber
\end{align}
$(\mathcal{F}_{\beta},\mathcal{T}_{\beta})$ forms a torsion pair in the bounded derived category of $\mathbb{P}^{3}$, because Harder-Narasimhan filtrations exist for the twisted slope $\mu_{\beta}$.
\begin{defin}
Let $\mathrm{Coh}^{\beta}(\mathbb{P}^{3})\subset\mathrm{D}^{\mathrm{b}}(\mathbb{P}^{3})$ be the extension-closure $\langle\mathcal{T}_{\beta}, \mathcal{F}_{\beta}[1]\rangle$. We define the following two functions on $\mathrm{Coh}^{\beta}(\mathbb{P}^{3})$:
\begin{align}
Z_{\alpha,\beta}&=-\left(\mathrm{ch}_{2}-\beta\mathrm{ch}_{1}+\left(\frac{\beta^{2}}{2}-\frac{\alpha^{2}}{2}\right)\mathrm{ch}_{0}\right)+i\left(\mathrm{ch}_{1}-\beta\mathrm{ch}_{0}\right),\nonumber\\
\nu_{\alpha,\beta}&=-\frac{\mathrm{Re}\left(Z_{{\alpha,\beta}}\right)}{\mathrm{Im}\left(Z_{{\alpha,\beta}}\right)}\nonumber
\end{align}
if $\mathrm{Im}(Z_{\alpha,\beta})\neq0$, and we let $\nu_{\alpha,\beta}=+\infty$ otherwise. An object $E\in\mathrm{Coh}^{\beta}(\mathbb{P}^{3})$ is called $\nu_{\alpha,\beta}$-(semi)stable if for all nontrivial subobjects $F$ of $E$, we have $\nu_{\alpha,\beta}(F)<(\leqslant)\nu_{\alpha,\beta}(E/F)$
\end{defin}
An important inequality introduced in \cite{BMT14} and proved in \cite{Mac14} for $\nu_{\alpha,\beta}$-semistable objects is the following:

\begin{theorem}(Generalized Bogomolov-Gieseker inequality) For any $\nu_{\alpha,\beta}$-semistable object $E\in\mathrm{Coh}^{\beta}(\mathbb{P}^{3})$ satisfying $\nu_{\alpha,\beta}(E)=0$, we have the following inequality
\begin{equation}
\mathrm{ch}_{3}\left(E\right)-\beta\mathrm{ch}_{2}\left(E\right)+\frac{\beta^{2}}{2}\mathrm{ch}_{1}\left(E\right)-\frac{\beta^{3}}{6}\mathrm{ch}_{0}\left(E\right)\leqslant\frac{\alpha^{2}}{6}\left(\mathrm{ch}_{1}\left(E\right)-\beta\mathrm{ch}_{0}\left(E\right)\right).\nonumber
\end{equation}
\label{GBG}
\end{theorem}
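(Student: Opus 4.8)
The plan is to follow the strategy of reducing tilt-stability on $\mathbb{P}^{3}$ to a stability notion on quiver representations, where the desired bound becomes a tractable numerical statement. The starting point is the classical Bogomolov--Gieseker inequality, which I would first invoke to confirm that $\nu_{\alpha,\beta}$-stability is a genuine slope-like stability on the heart $\mathrm{Coh}^{\beta}(\mathbb{P}^{3})$; the genuinely new content is only the third-Chern-character bound. Observe first that the combination on the left-hand side is exactly the twisted character $\mathrm{ch}_{3}^{\beta}(E)$ (where $\mathrm{ch}^{\beta}=e^{-\beta H}\mathrm{ch}$), the right-hand side is $\frac{\alpha^{2}}{6}\mathrm{ch}_{1}^{\beta}(E)$, and the hypothesis $\nu_{\alpha,\beta}(E)=0$ is the constraint $\mathrm{ch}_{2}^{\beta}(E)=\frac{\alpha^{2}}{2}\mathrm{ch}_{0}(E)$. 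Because the statement is invariant under tensoring by line bundles, under the shift $[1]$, and under the $\widetilde{\mathrm{GL}}^{+}(2,\mathbb{R})$-action on the space of stability conditions, I would first normalize $(\alpha,\beta)$ and the Chern character of $E$ to lie in a convenient range, reducing the problem to objects whose invariants sit in a bounded region of the $(\alpha,\beta)$-plane.

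The geometric input specific to $\mathbb{P}^{3}$ is the full strong exceptional collection $\langle\mathcal{O},\mathcal{O}(1),\mathcal{O}(2),\mathcal{O}(3)\rangle$, which yields an equivalence $\mathrm{D}^{\mathrm{b}}(\mathbb{P}^{3})\simeq\mathrm{D}^{\mathrm{b}}(\mathrm{mod}\text{-}A)$ with $A$ the associated Beilinson algebra. The key step is to show that, for $(\alpha,\beta)$ in a suitable chamber along the $\nu_{\alpha,\beta}(E)=0$ locus, the tilted heart $\mathrm{Coh}^{\beta}(\mathbb{P}^{3})$ can be compared with the heart of (shifted) $A$-modules, so that a tilt-semistable object $E$ with $\nu_{\alpha,\beta}(E)=0$ is identified with a representation of the quiver whose dimension vector is prescribed by $\mathrm{ch}(E)$. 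Under this identification the target inequality $\mathrm{ch}_{3}^{\beta}(E)\leqslant\frac{\alpha^{2}}{6}\mathrm{ch}_{1}^{\beta}(E)$ translates into a quadratic inequality among the entries of the dimension vector, which I would verify directly from the relations of $A$ together with the vanishing constraint $\mathrm{ch}_{2}^{\beta}(E)=\frac{\alpha^{2}}{2}\mathrm{ch}_{0}(E)$ forced by $\nu_{\alpha,\beta}(E)=0$.

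Finally, to pass from the single quiver chamber to the whole $\nu_{\alpha,\beta}(E)=0$ locus I would run a wall-crossing and deformation argument: tilt-semistability is governed by a locally finite wall-and-chamber structure, and the quantity to be bounded varies continuously, so it suffices to control the inequality at one endpoint of each arc of the semicircle $\nu_{\alpha,\beta}(E)=0$ and to check that crossing an actual wall, where $E$ becomes strictly semistable, only preserves or improves the inequality, by induction on the Jordan--Hölder factors, which have strictly smaller mass. I expect the main obstacle to be precisely the comparison between tilt-semistable objects and quiver representations: a priori a tilt-semistable complex can have nonzero cohomology in two degrees, so one must pin down the chamber in which the two hearts agree, bound the possible cohomology sheaves, and ensure the numerical reduction on the quiver side survives when an object degenerates on a wall. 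Handling these boundary and wall contributions rigorously, rather than the final quadratic estimate itself, is where the real difficulty lies.
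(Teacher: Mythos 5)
The paper does not prove this theorem at all: it is quoted as a known result, introduced in [BMT14] and proved in [Mac14], and is used here purely as an input (via [BMS14, Lemma 8.8] and [PT16]) to get existence of the stability conditions and properness of the moduli spaces. So there is no in-paper proof to compare against; the relevant comparison is with [Mac14], and your outline is essentially the strategy carried out there: normalize $\beta$ using $-\otimes\mathcal{O}(1)$, identify the tilt-semistable objects on the locus $\nu_{\alpha,\beta}(E)=0$ near a suitable limit point with representations of a quiver coming from an exceptional collection of line bundles, verify the resulting numerical inequality on dimension vectors, and propagate along the semicircle $\nu_{\alpha,\beta}(E)=0$ by a deformation argument with induction on Jordan--H\"older factors. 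Two caveats: the invariance you invoke comes from tensoring by line bundles and derived duality, not from the $\widetilde{\mathrm{GL}}^{+}(2,\mathbb{R})$-action (which rescales the central charge but does not move $(\alpha,\beta)$ within the family of tilt stability conditions); and the quiver region actually used in [Mac14] is the extension closure of three shifted line bundles $\mathcal{O}(k-1)[2]$, $\mathcal{O}(k)[1]$, $\mathcal{O}(k+1)$ adapted to the tilted heart, rather than the full Beilinson algebra of $\langle\mathcal{O},\mathcal{O}(1),\mathcal{O}(2),\mathcal{O}(3)\rangle$. As you acknowledge, the genuinely hard steps --- pinning down the chamber where the hearts agree and controlling wall contributions --- are deferred in your sketch, so this is a correct plan rather than a proof.
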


On the other hand, for the new slope function $\nu_{\alpha,\beta}$, Harder-Narasimhan filtrations also exist. If we repeat the above construction, and define
\begin{align}
\mathcal{T}'_{\alpha,\beta}&=\{E\in\mathrm{Coh}(\mathbb{P}^{3}):\text{any quotient object $G$ of $E$ satisfies }\nu_{\alpha, \beta}(G)>0\}\nonumber\\
\mathcal{F}'_{\alpha,\beta}&=\{E\in\mathrm{Coh}(\mathbb{P}^{3}):\text{any subobject $F$ of $E$ satisfies }\nu_{\alpha, \beta}(F)\leqslant0\}.\nonumber
\end{align}
Then $(\mathcal{F}'_{\alpha,\beta},\mathcal{T}'_{\alpha,\beta})$ forms a torsion pair of $\mathrm{Coh}^{\beta}(\mathbb{P}^{3})$.

\begin{defin}
Let $\mathscr{A}^{\alpha,\beta}\subset\mathrm{D}^{\mathrm{b}}(\mathbb{P}^{3})$ be the extension-closure $\langle\mathcal{T}'_{\alpha,\beta},\mathcal{F}_{\alpha,\beta}[1]\rangle$. We define the following two functions on $\mathscr{A}^{\alpha,\beta}$, for $s>0$:
\begin{align*}
Z_{{\alpha,\beta},s}&=-\left(\mathrm{ch}_{3}-\beta\mathrm{ch}_{2}-\left(\left(s+\frac{1}{6}\right)\alpha^{2}-\frac{\beta^{2}}{2}\right)\mathrm{ch}_{1}-\left(\frac{\beta^{3}}{6}-\left(s+\frac{1}{6}\right)\alpha^{2}\beta\right)\mathrm{ch}_{0}\right)\\&\qquad+i\left(\mathrm{ch}_{2}-\beta\mathrm{ch}_{1}+\left(\frac{\beta^{2}}{2}-\frac{\alpha^{2}}{2}\right)\mathrm{ch}_{0}\right)\\\lambda_{{\alpha,\beta},s}&=-\frac{\mathrm{Re}\left(Z_{{\alpha,\beta},s}\right)}{\mathrm{Im}\left(Z_{{\alpha,\beta},s}\right)}
\end{align*}
if $\mathrm{Im}(Z_{{\alpha,\beta},s})\neq0$, and we let $\lambda_{\alpha,\beta,s}=+\infty$ otherwise. An object $E\in\mathscr{A}^{\alpha,\beta}$ is called $\lambda_{\alpha,\beta,s}$-(semi)stable if for all nontrivial subobjects $F$ of $E$, we have $\lambda_{\alpha,\beta,s}(F)<(\leqslant)\lambda_{\alpha,\beta,s}(E/F)$.
\end{defin}
By [BMT14, Corollary 5.2.4] and [BMS14, Lemma 8.8], Theorem \ref{GBG} implies 
\begin{propo}
The pair ($\mathscr{A}^{\alpha,\beta},Z_{{\alpha,\beta},s}$) is a Bridgeland stability condition on $\mathrm{D}^{\mathrm{b}}(\mathbb{P}^{3})$ for all $(\alpha,\beta,s)\in\mathbb{R}_{>0}\times\mathbb{R}\times\mathbb{R}_{>0}$. The function $(\alpha,\beta,s)\mapsto(\mathscr{A}^{\alpha,\beta},Z_{{\alpha,\beta},s})$ is continuous.
\end{propo}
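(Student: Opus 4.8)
The plan is to invoke Bridgeland's reconstruction criterion \cite{Bri07} (Proposition 5.3, recalled above), which reduces the assertion to three checks: that $Z_{\alpha,\beta,s}$ is a stability function on the heart $\mathscr{A}^{\alpha,\beta}$, that it satisfies the Harder--Narasimhan property, and that it satisfies a support property forcing local finiteness. All of the genuine content sits in the first check, and there it is carried entirely by the generalized Bogomolov--Gieseker inequality of Theorem \ref{GBG}; the other two checks are formal.

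First I would verify that $Z_{\alpha,\beta,s}$ is a stability function, i.e. that $Z_{\alpha,\beta,s}(E)$ lies in the semi-closed upper half plane $\{re^{i\pi\phi}:r>0,\ \phi\in(0,1]\}$ for every nonzero $E\in\mathscr{A}^{\alpha,\beta}$. By construction the imaginary part $\mathrm{Im}(Z_{\alpha,\beta,s})=\mathrm{ch}_{2}-\beta\mathrm{ch}_{1}+(\tfrac{\beta^{2}}{2}-\tfrac{\alpha^{2}}{2})\mathrm{ch}_{0}$ is exactly the numerator of the slope $\nu_{\alpha,\beta}$. Since $\mathscr{A}^{\alpha,\beta}$ is the tilt of $\mathrm{Coh}^{\beta}(\mathbb{P}^{3})$ at the torsion pair $(\mathcal{F}'_{\alpha,\beta},\mathcal{T}'_{\alpha,\beta})$, an object of $\mathcal{T}'_{\alpha,\beta}$ has $\mathrm{Im}(Z_{\alpha,\beta,s})>0$ and an object of $\mathcal{F}'_{\alpha,\beta}[1]$ has $\mathrm{Im}(Z_{\alpha,\beta,s})\geqslant 0$, so the imaginary part is nonnegative on the whole heart. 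The delicate case is $\mathrm{Im}(Z_{\alpha,\beta,s}(E))=0$, where one must establish the strict inequality $\mathrm{Re}(Z_{\alpha,\beta,s}(E))<0$.

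This boundary positivity is precisely where Theorem \ref{GBG} enters, and it is the heart of [BMT14, Corollary 5.2.4]. The objects $E$ with $\mathrm{Im}(Z_{\alpha,\beta,s}(E))=0$ reduce, via the Harder--Narasimhan filtration for $\nu_{\alpha,\beta}$, to shifts $E=F[1]$ of $\nu_{\alpha,\beta}$-semistable objects $F$ with $\nu_{\alpha,\beta}(F)=0$ and $\mathrm{ch}_{1}(F)-\beta\mathrm{ch}_{0}(F)>0$. Regrouping the real part and applying Theorem \ref{GBG} to $F$ gives
\begin{align*}
\mathrm{Re}\big(Z_{\alpha,\beta,s}(F)\big)
&=-\Big(\mathrm{ch}_{3}-\beta\mathrm{ch}_{2}+\tfrac{\beta^{2}}{2}\mathrm{ch}_{1}-\tfrac{\beta^{3}}{6}\mathrm{ch}_{0}\Big)
+\big(s+\tfrac{1}{6}\big)\alpha^{2}\big(\mathrm{ch}_{1}-\beta\mathrm{ch}_{0}\big)\\
&\geqslant -\tfrac{\alpha^{2}}{6}\big(\mathrm{ch}_{1}-\beta\mathrm{ch}_{0}\big)
+\big(s+\tfrac{1}{6}\big)\alpha^{2}\big(\mathrm{ch}_{1}-\beta\mathrm{ch}_{0}\big)
= s\,\alpha^{2}\big(\mathrm{ch}_{1}(F)-\beta\mathrm{ch}_{0}(F)\big)>0,
\end{align*}
using $s>0$ in the final step. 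Since $\mathrm{ch}(F[1])=-\mathrm{ch}(F)$, this yields $\mathrm{Re}(Z_{\alpha,\beta,s}(F[1]))<0$, which is exactly what the stability-function axiom demands at the boundary.

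It then remains to produce the Harder--Narasimhan filtrations and the support property. The former follows from the standard Noetherian argument of Bridgeland, since the imaginary part takes values in a discrete subset of $\mathbb{R}$ by integrality of the Chern character; the latter, together with the local finiteness needed to land in $\mathrm{Stab}(\mathbb{P}^{3})$ and with the asserted continuity of $(\alpha,\beta,s)\mapsto(\mathscr{A}^{\alpha,\beta},Z_{\alpha,\beta,s})$, is supplied by the explicit quadratic form of [BMS14, Lemma 8.8], whose definiteness on semistable classes is again a consequence of Theorem \ref{GBG}. The main obstacle is precisely the boundary positivity above: it is the only place where the inequality of Theorem \ref{GBG} is genuinely indispensable, and once that theorem is granted the remaining assembly into a continuous family of stability conditions is routine.
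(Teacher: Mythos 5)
Your proposal is correct and takes essentially the same route as the paper, whose entire proof of this proposition is the citation of [BMT14, Corollary 5.2.4] and [BMS14, Lemma 8.8] in the preceding sentence — precisely the two results you are unpacking — and your regrouping of $\mathrm{Re}(Z_{\alpha,\beta,s})$ to extract the bound $s\,\alpha^{2}(\mathrm{ch}_{1}-\beta\mathrm{ch}_{0})>0$ from Theorem \ref{GBG} is exactly the content of the first citation. One small imprecision: an object of $\mathcal{T}'_{\alpha,\beta}$ need not have $\mathrm{Im}(Z_{\alpha,\beta,s})>0$ (a zero-dimensional sheaf $T$ lies in $\mathcal{T}'_{\alpha,\beta}$ with $\mathrm{Im}(Z_{\alpha,\beta,s}(T))=0$), so your boundary analysis must also cover such objects in addition to the shifts $F[1]$, which is immediate since there $\mathrm{Re}(Z_{\alpha,\beta,s}(T))=-\mathrm{ch}_{3}(T)<0$.
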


Once the existence problem is solved, we want to study the moduli space $M_{\lambda_{\alpha,\beta,s}}(v)$ of $\lambda_{\alpha,\beta,s}$-semistable objects $E\in\mathscr{A}^{\alpha,\beta}$ with a fixed Chern character $\mathrm{ch}(E)=v$, and the wall-crossing phenomena in the space of stability conditions when varying $(\alpha,\beta,s)\in\mathbb{R}_{>0}\times\mathbb{R}\times\mathbb{R}_{>0}$. For the wall-crossing phenomena, the expectation here is something similar to [Bri08, Section9]: we have a collection of codimension $1$ submanifolds in $(\alpha,\beta,s)\in\mathbb{R}_{>0}\times\mathbb{R}\times\mathbb{R}_{>0}$ called walls, the complement of all walls is a disjoint union of open subset called chambers. If we move stability conditions in a chamber, there is no strictly semistable object and the set of semistable objects does not change. The set of semistable objects changes only when we cross a wall. For the moduli space of semistable objects, we have two technical difficulties according to \cite{AP06} when we construct it: generic flatness and boundedness. In the case of 3-folds, assuming the generalized Bogomolov-Gieseker inequality, we have the following result from [PT16, Theorem 4.2; Corollary 4.23]:
\begin{theorem}
Assume $X$ is a smooth projective 3-fold on which the generalized Bogomolov-Gieseker inequality holds for tilt-semistable objects, then the moduli functor of Brideland semistable objects $\mathcal{M}_{\sigma}(v)$ for a fixed Chern character $v$ is a quasi-proper algebraic stack of finite-type over $\mathbb{C}$. If there is no strictly semistable object, then $\mathcal{M}_{\sigma}(v)$ is a $\mathbb{C}^{*}$-gerbe over a proper algebraic space $M_{\sigma}(v)$.
\end{theorem}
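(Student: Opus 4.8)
The plan is to follow the strategy of Abramovich--Polishchuk \cite{AP06} for moduli of objects in the heart of a t-structure, combined with the algebraicity of the stack of all objects in $\mathrm{D}^{\mathrm{b}}(X)$ due to Lieblich and Inaba. The construction falls into four stages: first, spread the heart $\mathscr{A}^{\alpha,\beta}$ out to a Noetherian family of hearts $\mathscr{A}_{S}$ on $\mathrm{D}^{\mathrm{b}}(X\times S)$ for every finite-type base $S$; second, realize $\mathcal{M}_{\sigma}(v)$ as an open substack of the stack of objects, which is already known to be algebraic; third, establish boundedness and the valuative criterion; fourth, pass to the gerbe structure. I expect the third stage, and specifically boundedness, to be the main obstacle, and this is precisely where the generalized Bogomolov--Gieseker inequality (Theorem \ref{GBG}) is used.

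First I would address generic flatness: for a base $S$ and an object $E\in\mathrm{D}^{\mathrm{b}}(X\times S)$, one shows that over a dense open $U\subseteq S$ the restriction of $E$ lies in the relative heart, so that the notion of an $S$-flat family of objects in $\mathscr{A}^{\alpha,\beta}$ makes sense. Together with the Noetherian property of $\mathscr{A}^{\alpha,\beta}$, which follows because it is obtained by tilting a Noetherian heart and because $\mathrm{Im}(Z_{\alpha,\beta,s})$ and $\mathrm{Re}(Z_{\alpha,\beta,s})$ take discrete values, the Abramovich--Polishchuk machine then produces a t-structure on $\mathrm{D}^{\mathrm{b}}(X\times S)$ with heart $\mathscr{A}_{S}$, functorially in $S$. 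This turns $\mathcal{M}_{\sigma}(v)$ into a genuine moduli functor. By the results of Lieblich and Inaba, the stack of objects $E$ with $\mathrm{Ext}^{<0}(E,E)=0$ is an algebraic stack locally of finite type over $\mathbb{C}$; the condition that $E$ lie in $\mathscr{A}^{\alpha,\beta}$ is open by generic flatness, the condition $\mathrm{ch}(E)=v$ is open and closed, and $\lambda_{\alpha,\beta,s}$-semistability is open because Harder--Narasimhan filtrations are constructible in families. Hence $\mathcal{M}_{\sigma}(v)$ is an algebraic stack.

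To obtain finite type I must bound the family of $\lambda_{\alpha,\beta,s}$-semistable objects of class $v$. Here I would reduce double-tilt stability to tilt stability $\nu_{\alpha,\beta}$ and then to ordinary slope and Gieseker stability through a chain of elementary inequalities on the cohomology objects and their Harder--Narasimhan factors; at each step Theorem \ref{GBG} bounds $\mathrm{ch}_{3}$ in terms of the lower Chern characters, which is exactly what prevents the cohomology sheaves from acquiring subquotients of unbounded class. Boundedness of the resulting slope- and Gieseker-semistable sheaves is classical, and one transports it back up the chain. With boundedness in hand, quasi-properness is the existence part of the valuative criterion: given a family over the punctured spectrum of a discrete valuation ring, I extend it by taking a limit inside the relative heart $\mathscr{A}_{S}$, using its Noetherian and Artinian properties to produce a maximal destabilizing-free extension, with boundedness guaranteeing that the central fiber has class $v$ and is semistable.

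Finally, the gerbe structure. If there is no strictly semistable object, then every semistable object is stable, hence simple, so $\mathrm{Hom}(E,E)=\mathbb{C}$ and the only automorphisms are the scalars $\mathbb{C}^{*}$. This $\mathbb{C}^{*}$ lies in the center of every automorphism group and acts trivially, so one may rigidify: the rigidification $M_{\sigma}(v)$ of $\mathcal{M}_{\sigma}(v)$ by $\mathbb{C}^{*}$ is an algebraic space, and $\mathcal{M}_{\sigma}(v)\longrightarrow M_{\sigma}(v)$ is a $\mathbb{C}^{*}$-gerbe. Separatedness of $M_{\sigma}(v)$ follows from uniqueness of limits in the valuative criterion, since the absence of strictly semistable objects forbids two non-isomorphic limits, and combined with the quasi-properness already established this upgrades separatedness plus existence of limits to properness of $M_{\sigma}(v)$.
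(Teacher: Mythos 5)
This statement is not proved in the paper at all: it is imported verbatim from Piyaratne--Toda \cite{PT16} (their Theorem 4.2 and Corollary 4.23), so there is no internal proof to compare against. Your outline is, in broad strokes, the strategy that the cited source actually follows — Abramovich--Polishchuk's relative t-structures \cite{AP06} for the definition of the moduli functor, Lieblich--Inaba for algebraicity of the ambient stack of complexes, the generalized Bogomolov--Gieseker inequality (Theorem \ref{GBG}) for boundedness, and rigidification by $\mathbb{C}^{*}$ in the stable case. As a record of where the difficulties lie and which tool addresses each one, it is accurate.

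As a proof, however, it is incomplete precisely at the steps the paper flags as the two technical obstacles, namely generic flatness and boundedness. The sentence ``reduce double-tilt stability to tilt stability and then to ordinary slope and Gieseker stability through a chain of elementary inequalities'' is a placeholder for what is, in \cite{PT16}, the bulk of the work: one must actually produce, for each $\lambda_{\alpha,\beta,s}$-semistable $E$ of class $v$, uniform bounds on the Harder--Narasimhan factors of the cohomology sheaves of $E$, and it is not at all formal that Theorem \ref{GBG} yields these. Two smaller points. First, your claim that openness of semistability follows ``because Harder--Narasimhan filtrations are constructible in families'' has the logic backwards: openness of semistability itself requires boundedness of the potential destabilizers, so it cannot be established before the boundedness stage. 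Second, the heart $\mathscr{A}^{\alpha,\beta}$ is Noetherian but not Artinian (already $\mathrm{Coh}(X)$ fails to be Artinian), so the valuative-criterion argument cannot invoke ``Noetherian and Artinian properties''; the Abramovich--Polishchuk limit construction uses only Noetherianity together with the structure of the central charge. None of this makes the outline wrong in direction, but the boundedness step in particular cannot be accepted as proved from what is written.
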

\noindent There is also an important point in $\mathrm{Stab}(\mathbb{P}^{3})$ called the large volume limit of Bridgeland stability. Roughly speaking, it means when the polarization is large enough (taking $\alpha\rightarrow+\infty$ in Proposition $2.5$), the moduli space of semistable objects will become the same as the moduli space of Gieseker semistable sheaves. [Bri08, Section 14] illustrates this picture in the case of K3 surfaces.

Now we are ready to define the notion of a simple wall-crossing. Fix a wall $W$ and two adjacent chambers $C_{1}$, $C_{2}$ in $\mathrm{Stab}(\mathbb{P}^{3})$, we denote the stability conditions in the chambers $C_{1}$, $C_{2}$ by $\lambda_{1}$, $\lambda_{2}$ respectively.
\begin{defin} A wall-crossing is simple if there exists two nonempty moduli spaces $\mathbf{M}_{A}$ and $\mathbf{M}_{B}$ of semistable objects in $\mathscr{A}^{\alpha,\beta}$ with Chern character $v_{A}$ and $v_{B}$ for stability conditions in a neighborhood of a point on $W$ meeting $C_{1}$ and $C_{2}$ such that:

$(1)$ $v_{A}+v_{B}=v$ and any $A\in\mathbf{M}_{A}$ and $B\in\mathbf{M}_{B}$ is stable;

$(2)$ if $E$ is $\lambda_{1}$-stable but not $\lambda_{2}$-stable, then there exists a unique pair $(A,B)$ in $\mathbf{M}_{A}\times\mathbf{M}_{B}$ such that $0\longrightarrow B\longrightarrow E\longrightarrow A\longrightarrow0$ is a nontrivial extension. Conversely, all nontrivial extensions of $A$ by $B$ are $\lambda_{1}$-stable but not $\lambda_{2}$-stable;

$(3)$ if $F$ is $\lambda_{1}$-stable but not $\lambda_{2}$-stable, then there exists a unique pair $(A,B)$ in $\mathbf{M}_{A}\times\mathbf{M}_{B}$ such that $0\longrightarrow A\longrightarrow F\longrightarrow B\longrightarrow0$ is a nontrivial extension. Conversely, all nontrivial extensions of $B$ by $A$ are $\lambda_{1}$-stable but not $\lambda_{2}$-stable.
\label{lihai}
\end{defin}

Now we fix $v=\mathrm{ch}(\mathcal{I}_{C})$, where $C$ is a twisted cubic in $\mathbb{P}^{3}$. We briefly recall the main ideas of finding the wall-crossings in the Main Theorem without using \cite{PS85,EPS87} as follows: First, we can formally use numerical properties of a wall together with the usual Bogomolov inequality to find the Chern characters $v_{A}$ and $v_{B}$ (Actually, this procedure can be made into a computer algorithm, see [SchB15, Theorem 5.3; Theorem 6.1; Section 5.3] for more details). For the first wall-crossing, we have $v_{A}=\mathrm{ch}(\mathcal{O}(-2)^{3})$ and $v_{B}=\mathrm{ch}(\mathcal{O}(-3)[1]^{2})$. In [SchB15, Proposition 4.5], Schmidt showed that $\mathcal{O}(-2)^{3}$ and $\mathcal{O}(-3)[1]^{2}$ are the only semistable object with those Chern characters. Since these two objects are only strictly semistable, the first wall-crossing is not simple. But it is still not hard to construct the moduli space in this case via quiver representations. For the second wall-crossing, we have $v_{A}=\mathrm{ch}(\mathcal{I}_{p}(-1))$ and $v_{B}=\mathrm{ch}(\mathcal{O}_{V}(-3))$, where $p$ is a point in $\mathbb{P}^{3}$ and $V$ is a plane in $\mathbb{P}^{3}$. In [SchB15, Theorem 5.3], Schmidt showed that $\mathcal{I}_{p}(-1)$ and $\mathcal{O}_{V}(-3)$ are all the semistable objects with those Chern characters. It is also easy to check that in this case $\mathcal{I}_{p}(-1)$ and $\mathcal{O}_{V}(-3)$ are stable, so the second wall-crossing is simple, and the moduli spaces $\mathbf{M}_{A}$ and $\mathbf{M}_{B}$ in Definition \ref{lihai} are $\mathbb{P}^{3}$ and $(\mathbb{P}^{3})^{*}$ respectively. The third wall-crossing is similar to the second wall-crossing. We have $v_{A}=\mathrm{ch}(\mathcal{O}(-1))$ and $v_{B}=\mathrm{ch}(\mathcal{I}_{q/V}(-3))$, where $V$ is a plane in $\mathbb{P}^{3}$ and $q$ is a point on $V$. $\mathcal{O}(-1)$ and $\mathcal{I}_{q/V}(-3)$ are all the semistable objects with those Chern character and they are stable. The third wall-crossing is also simple, with $\mathbf{M}_{A}$ being a point and $\mathbf{M}_{B}$ being the incidence hyperplane $H$ contained in $\mathbb{P}^{3}\times(\mathbb{P}^{3})^{*}$. The statement that $\mathbf{M}_{3}$ is the Hilbert scheme is due to the facts that the large volume limits of Bridgeland stability conditions coincides with Gieseker stability conditions, and the moduli space of Gieseker semistable ideal sheaves is the same with the Hilbert scheme.

We will study the three wall-crossings of the Main Theorem in details in the next three sections.

\section{The First Wall-crossing}
In this section, we construct the moduli space $\mathbf{M}_{1}$ and prove that it is a smooth, projective and integral variety. This part first appears in [SchB15, Theorem 7.1], we will give more details here.

We start with a quiver $Q=(V,A):V=\{v_{1},v_{2}\},A=\{e_{i}|i=1,2,3,4\}$, where $s(e_{i})=v_{1}$ and $t(e_{i})=v_{2}$ (Actually $Q$ is just $\bullet\overset{4}{\longrightarrow}\bullet$). We set a dimension vector to be (2,3) and define $\theta:\mathbb{Z}\oplus\mathbb{Z}\longrightarrow\mathbb{Z}$ to be $\theta(m,n)=-3m+2n$. A representation $V$ with dimension vector $(2,3)$ is $\theta$-(semi)stable if for any proper nontrivial subrepresentation $W$ we have $\theta(\mathrm{\underline{dim}}W)>(\geqslant)0$, where $\mathrm{\underline{dim}}W$ is the dimension vector of $W$. If $S$ is a scheme, we define a family of $\theta$-semistable representations of $Q$ over $S$ with dimension vector $(2,3)$ to be four homomorphisms $f_{0},f_{1},f_{2},f_{3}:V\longrightarrow W$, where $V$ and $W$ are locally free on $S$ with $\mathrm{rk}(V)=2$ and $\mathrm{rk}(W)=3$, such that the representation $f_{0s},f_{1s},f_{2s},f_{3s}:V_{s}\longrightarrow W_{s}$ is $\theta$-semistable for any closed point $s\in S$. We define $\mathcal{K}_{\theta}:\mathbf{Sch}_{\mathbb{C}}\longrightarrow\mathbf{Sets}$ to be the moduli functor sending a scheme $S$ to the set of isomorphism classes of families of $\theta$-semistable representations with dimension vector $(2,3)$ over $S$.
\begin{propo}
The functor $\mathcal{K}_{\theta}$ is represented by a smooth projective integral variety $K_{\theta}$.
\label{hulai}
\end{propo}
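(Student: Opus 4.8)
The plan is to realize $K_\theta$ as a GIT quotient in the sense of King and to read off all the stated properties from the special numerics of the dimension vector $(2,3)$ and the weight $\theta$. First I would fix vector spaces $V_1, V_2$ of dimensions $2$ and $3$ and form the parameter space $R = \mathrm{Hom}(V_1, V_2)^{\oplus 4}$, an affine space isomorphic to $\mathbb{A}^{24}$, on which $G = GL(V_1)\times GL(V_2)$ acts by change of basis. The weight $\theta$ determines a character $\chi$ of $G$ (well defined modulo the diagonal scalars, which act trivially), and King's reformulation of the Hilbert--Mumford criterion identifies $\theta$-(semi)stability of a representation with GIT (semi)stability of the corresponding point of $R$ for the linearization given by $\chi$. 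The moduli space is then $K_\theta = R /\!/_\chi G$, which is projective, and the functor $\mathcal{K}_\theta$ is modeled on this quotient.

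The decisive point is that for the dimension vector $(2,3)$ semistability coincides with stability. A proper nontrivial subrepresentation has dimension vector $(a,b)$ with $0 \le a \le 2$, $0 \le b \le 3$, and $(a,b) \notin \{(0,0),(2,3)\}$, and it fails to enforce strict stability only if $\theta(a,b) = -3a + 2b = 0$, i.e. $3a = 2b$. The only solutions in the allowed range are $(0,0)$ and $(2,3)$, both excluded; hence no $\theta$-semistable representation carries a subrepresentation of slope $0$, so every $\theta$-semistable representation is in fact $\theta$-stable. Consequently the strictly semistable locus is empty, every GIT-semistable point is stable, and $R^s /\!/_\chi G = R^s/PG$ is a geometric quotient by the free action of $PG = G/\mathbb{C}^*$, the stabilizer of a stable point being exactly the scalars. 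Because $\gcd(2,3) = 1$, one can choose a character of $G$ restricting to the identity on the central $\mathbb{C}^*$; this trivializes the residual gerbe and produces a universal family over $K_\theta$, so that $K_\theta$ genuinely represents $\mathcal{K}_\theta$ rather than only corepresenting it.

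For smoothness I would argue in either of two equivalent ways. The clean route uses that the path algebra of $Q$ is hereditary (the quiver has no relations), so $\mathrm{Ext}^{\ge 2}$ between representations vanishes and all deformation obstructions die, whence the moduli space is smooth of expected dimension. Alternatively, $R^s$ is an open, hence smooth, subscheme of the affine space $R$, and $PG$ acts freely on it, so the geometric quotient $K_\theta = R^s/PG$ is smooth. The dimension is $\dim R - \dim PG = 24 - 12 = 12$, matching the Main Theorem. Integrality then follows formally: $R$ is irreducible, being an affine space, so its nonempty open subscheme $R^s$ is irreducible, and therefore so is the quotient $K_\theta$, while smoothness yields reducedness.

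The main obstacle I anticipate lies not in the numerics but in two existence statements that must be nailed down: the nonemptiness of $R^s$ and the construction of the universal family. For nonemptiness I would exhibit a stable representation directly, choosing four maps $V_1 \to V_2$ in sufficiently general position so that none of the configurations $(a,b) \in \{(1,0),(2,0),(1,1),(2,1),(2,2)\}$ --- the only dimension vectors with $\theta \le 0$ that can occur for an honest subrepresentation --- actually arises; since each such degeneration (a common kernel line, all maps vanishing, all images confined to a common line or plane) is a closed condition, a general point of $R$ is stable and $R^s \neq \emptyset$. For the universal family I would invoke King's Proposition $5.3$ together with the coprimality $\gcd(2,3)=1$, which is precisely what upgrades the coarse moduli space to a fine one and hence delivers full representability of $\mathcal{K}_\theta$ by the smooth projective integral variety $K_\theta$.
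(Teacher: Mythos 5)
Your proof is correct and takes essentially the same route as the paper, which simply cites King's GIT construction for projectivity and the absence of strictly $\theta$-semistable representations, and the heredity of the path algebra for smoothness and irreducibility. Your explicit verification that $\theta(a,b)=-3a+2b$ vanishes at no proper nontrivial sub-dimension-vector, together with the nonemptiness and fine-moduli checks via $\gcd(2,3)=1$, merely fills in the details the paper leaves to the reference \cite{Kin94}.
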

\begin{proof}
By \cite{Kin94}, since the dimension vector $(2,3)$ is indivisible, $\mathcal{K}_{\theta}$ is represented by a projective variety $K_{\theta}$ and there is no strictly $\theta$-semistable representation. The path algebra of $Q$ is hereditary since there is no relation between arrows, this means $K_{\theta}$ is smooth and irreducible.
\end{proof}
\begin{theorem}
The two moduli spaces $K_{\theta}$ and $\mathbf{M}_{1}$ are isomorphic.
\label{ritian}
\end{theorem}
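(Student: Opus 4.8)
The plan is to realize the heart-theoretic incarnation of $\mathbf{M}_{1}$ as the abelian category of representations of $Q$, match the two stability notions first on points and then in families, and conclude by representability. First I would identify the correct abelian category. Let $\mathcal{A}$ denote the extension-closure $\langle\mathcal{O}(-2),\mathcal{O}(-3)[1]\rangle$ inside $\mathrm{D}^{\mathrm{b}}(\mathbb{P}^{3})$. A computation of the cohomology of line bundles shows that $\mathrm{Hom}(\mathcal{O}(-2),\mathcal{O}(-3)[k])=\mathrm{H}^{k}(\mathbb{P}^{3},\mathcal{O}(-1))=0$ for all $k$, while $\mathrm{Ext}^{\bullet}(\mathcal{O}(-3),\mathcal{O}(-2))$ is concentrated in degree $0$ with $\mathrm{Hom}(\mathcal{O}(-3),\mathcal{O}(-2))=\mathrm{H}^{0}(\mathbb{P}^{3},\mathcal{O}(1))\cong\mathbb{C}^{4}$. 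Hence $\mathcal{A}$ is the heart of a bounded $t$-structure whose only simple objects are $S_{1}=\mathcal{O}(-3)[1]$ and $S_{2}=\mathcal{O}(-2)$, with $\mathrm{Hom}(S_{i},S_{j})=\delta_{ij}\mathbb{C}$, $\mathrm{Ext}^{1}(S_{1},S_{2})\cong\mathbb{C}^{4}$, $\mathrm{Ext}^{1}(S_{2},S_{1})=0$, and all higher $\mathrm{Ext}$-groups between the simples vanishing. Therefore $\mathcal{A}$ is equivalent to the category of finite-dimensional representations of $Q=\bullet\overset{4}{\to}\bullet$, with $S_{1},S_{2}$ corresponding to the two vertices; the absence of relations recovers the hereditary property used in Proposition \ref{hulai}. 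Under this equivalence the dimension vector $(2,3)$ corresponds to $2[\mathcal{O}(-3)[1]]+3[\mathcal{O}(-2)]=3\,\mathrm{ch}(\mathcal{O}(-2))-2\,\mathrm{ch}(\mathcal{O}(-3))=v$, and a representation $(f_{0},\dots,f_{3}\colon\mathbb{C}^{2}\to\mathbb{C}^{3})$ is sent to the complex $[\mathcal{O}(-3)^{2}\xrightarrow{\sum_{i}x_{i}f_{i}}\mathcal{O}(-2)^{3}]$, whose general member is the ideal sheaf $\mathcal{I}_{C}$ of a twisted cubic.

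Next I would prove that this equivalence carries $\theta$-semistable representations to $\lambda_{2}$-semistable objects and conversely. For containment I use the analysis of the first wall recalled before Theorem \ref{ritian}: since $\mathcal{O}(-2)$ and $\mathcal{O}(-3)[1]$ are the only $\lambda$-stable objects of the two relevant classes on the wall and have equal phase there, every $\lambda_{2}$-semistable object of class $v$ is semistable on the wall and so admits a filtration with factors among these two objects, hence lies in $\mathcal{A}$; conversely every object of $\mathcal{A}$ of class $v$ is of the stated complex form. For the numerical matching I would evaluate $Z_{\alpha,\beta,s}$ on $S_{1}$ and $S_{2}$ at a stability condition in chamber $(2)$ and check that the induced King character on the Grothendieck group of $\mathcal{A}$ is a positive multiple of $\theta(m,n)=-3m+2n$; note $\theta(2,3)=0$, consistent with $\lambda_{2}$ being a slope. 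Since subobjects in $\mathcal{A}$ correspond to subrepresentations, the inequality $\lambda_{2}(E')<\lambda_{2}(E/E')$ becomes exactly $\theta(\underline{\dim}\,E')>0$, so the two (semi)stability notions agree pointwise.

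Finally I would upgrade the equivalence to families to obtain an isomorphism of moduli functors. Given a family of representations $f_{0},\dots,f_{3}\colon V\to W$ over a scheme $S$, I form the complex $[\,p^{*}\mathcal{O}(-3)\otimes V\to p^{*}\mathcal{O}(-2)\otimes W\,]$ on $\mathbb{P}^{3}\times S$ with differential $\sum_{i}x_{i}\otimes f_{i}$, and conversely reconstruct $(V,W,f_{i})$ from the relative hypercohomology of an object against the exceptional pair $(\mathcal{O}(-3),\mathcal{O}(-2))$. Checking that these operations are mutually inverse, that the resulting families are flat over $S$, and that fiberwise $\theta$-semistability matches fiberwise $\lambda_{2}$-semistability yields an isomorphism of functors $\mathcal{K}_{\theta}\cong\mathcal{M}_{\lambda_{2}}(v)$. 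Because both $(2,3)$ and $v$ are primitive, there are no strictly semistable objects on either side (by \cite{Kin94} and the simplicity of the wall), so both functors are represented by fine moduli spaces and we conclude $K_{\theta}\cong\mathbf{M}_{1}$.

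I expect the main obstacle to be the family-level statement of the last paragraph rather than the pointwise bijection: one must promote the tilting identification $\mathcal{A}\simeq\mathrm{Rep}(Q)$ to a relative statement over an arbitrary base, with the requisite flatness and base-change control guaranteeing that the two universal families correspond, and that semistability is preserved in families. The point-wise content (the $\mathrm{Ext}$-computation identifying $\mathcal{A}$ with $\mathrm{Rep}(Q)$ and the central-charge matching with $\theta$) is comparatively routine once the first wall is understood.
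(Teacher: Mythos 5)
Your proposal is correct in substance and follows the same backbone as the paper: identify the extension closure $\langle\mathcal{O}(-2),\mathcal{O}(-3)[1]\rangle$ with $\mathrm{Rep}(Q)$ via the $\mathrm{Ext}$-computation between the two simples, use the wall analysis to show every semistable object of class $v$ in chamber $(2)$ lies in that heart, and match the central charge against $(\theta,\dim)$ so that the two stability notions coincide (the paper phrases this as the two charges differing by an element of $\mathrm{GL}^{+}(2;\mathbb{R})$, after first reducing $\lambda$-stability to tilt stability via Schmidt; your ``positive multiple of $\theta$'' check is the same computation). Where you diverge is the endgame, and it is worth noting because the difficulty you flag in your last paragraph is precisely what the paper engineers around. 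You aim for an isomorphism of moduli \emph{functors}, which forces you to construct the family-level equivalence in both directions with flatness and base-change control. The paper instead builds a morphism in only one direction: it uses the absence of strictly semistable objects to obtain a universal family on $\mathbf{M}_{\sigma}\times\mathbb{P}^{3}$, pushes it through a relative version of the tilting functor to get $\varphi:\mathbf{M}_{\sigma}\longrightarrow K_{\theta}$, observes $\varphi$ is bijective on points, and then closes the argument with a smoothness trick: the triangle $\mathcal{O}(-2)^{3}\longrightarrow E\longrightarrow\mathcal{O}(-3)[1]^{2}$ gives $\mathrm{Ext}^{2}(E,E)=0$, so $\mathbf{M}_{\sigma}$ is smooth, $K_{\theta}$ is smooth by Proposition \ref{hulai}, and a bijective morphism between smooth varieties is an isomorphism. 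That $\mathrm{Ext}^{2}$ vanishing is the one computation missing from your outline, and adding it would let you discharge the family-level obstacle entirely rather than having to resolve it. Two small points: the paper's $\lambda_{2}$ denotes the chamber of $\mathbf{M}_{2}$, not chamber $(2)$ of the Main Theorem, so your notation should be adjusted; and the gerbe issue means the semistable moduli functor is not literally represented by $M_{\sigma}(v)$, which is another reason the paper works with the coarse space and a one-directional morphism rather than a functor isomorphism.
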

\begin{proof}
Fix $(\alpha_{0},\beta_{0})=(\frac{1}{2}+\varepsilon,-\frac{5}{2})$, where $\varepsilon>0$ is small. By [SchB15, Theorem 5.3; Theorem 6.1], $\mathbf{M}_{1}$ is isomorphic to the moduli space $\mathbf{M}^{\mathrm{tilt}}_{\alpha_{0},\beta_{0}}(v)$ of $\nu_{\alpha_{0},\beta_{0}}$-semistable objects in $\mathrm{Coh}^{\beta_{0}}(\mathbb{P}^{3})$. Since $(\alpha_{0},\beta_{0})$ is in the interior of a chamber, there is no strictly semistable objects. Notice that $-3<\beta_{0}<-2$, so by definition $\mathcal{O}(-2)$ and $\mathcal{O}(-3)[1]$ are in $\mathrm{Coh}^{\beta_{0}}(\mathbb{P}^{3})$, and we have
\begin{align}
Z_{\alpha_{0},\beta_{0}}\left(\mathcal{O}(-2)\right)&=-\frac{1}{8}+\frac{\alpha_{0}^{2}}{2}+\frac{1}{2}i,\nonumber\\
Z_{\alpha_{0},\beta_{0}}\left(\mathcal{O}(-3)[1]\right)&=\frac{1}{8}-\frac{\alpha_{0}^{2}}{2}+\frac{1}{2}i.\nonumber
\end{align}
On the other hand, We denote $\mathrm{Rep}(Q)$ to be the abelian category of quiver representations of $Q$, and denote $\mathscr{B}$ to be the extension closure of $\mathcal{O}(-2)$ and $\mathcal{O}(-3)[1]$ in $\mathrm{Coh}^{\beta_{0}}(\mathbb{P}^{3})$. By [SchB15, Theorem 5.1], all $\nu_{\alpha_{0},\beta_{0}}$-semistable objects are in $\mathscr{B}$. By [Bon89, Theorem 6.2], there is an equivalence $F:\mathrm{D}^{\mathrm{b}}(\mathscr{B})\longrightarrow\mathrm{D}^{\mathrm{b}}(\mathrm{Rep}(Q))$. This functor $F$ sends $\mathcal{O}(-3)[1]$ and $\mathcal{O}(-2)$ to the two simple representations $\mathbb{C}\longrightarrow0$ and $0\longrightarrow\mathbb{C}$. On $\mathscr{B}$, we can define a central charge $Z$ and a slope function $\eta$ by
\begin{align*}
Z\left(E\right)&=\theta\left(F^{-1}\left(E\right)\right)+i\mathrm{dim}\left(F^{-1}\left(E\right)\right),\\
\eta\left(E\right)&=-\frac{\mathrm{Re}\left(Z\left(E\right)\right)}{\mathrm{Im}\left(Z\left(E\right)\right)}=-\frac{\theta\left(F^{-1}\left(E\right)\right)}{\mathrm{dim}\left(F^{-1}\left(E\right)\right)},
\end{align*}
where $\mathrm{dim}$ is the sum of the two components of a dimension vector. This will make $\sigma:=(Z,\mathscr{B})$ a stability condition on $\mathrm{D}^{\mathrm{b}}(\mathscr{B})$ by [Bri07, Example 5.5], and $F$ sends $\sigma$-semistable objects with Chern character $v$ to $\theta$-semistable represetations with dimension vector $(2,3)$. If we denote $\mathbf{M}_{\sigma}$ to be the moduli of $\sigma$-semistable objects in $\mathscr{B}$ with Chern character $v$, then actually $F$ defines a bijection map of sets between $\mathbf{M}_{\sigma}$ and $K_{\theta}$. We will globalize this construction later and get a bijective morphism by using the existence of a universal family. Now we compute that
\begin{align}
Z\left(\mathcal{O}(-2)\right)&=2+i,\nonumber\\
Z\left(\mathcal{O}(-3)[1]\right)&=-3+i.\nonumber
\end{align}
If we view $Z$ and $Z_{\alpha_{0},\beta_{0}}|_{\mathrm{D}^{\mathrm{b}}(\mathscr{B})}$ as linear maps from $\mathbb{Z}^{2}$ to $\mathbb{R}^{2}$, then an easy computation shows they differ from each other by composing a linear map in $\mathrm{GL}^{+}(2;\mathbb{R})$. This means they define the same stability condition and hence have the same moduli of semistable objects with Chern character $v$, so $\mathbf{M}_{\sigma}=\mathbf{M}^{\mathrm{tilt}}_{\alpha_{0},\beta_{0}}(v)$.

It only remains to show that $K_{\theta}$ is isomorphic to $\mathbf{M}_{\sigma}$. For any $\sigma$-semistable object $E\in\mathrm{D}^{\mathrm{b}}(\mathscr{B})$ with Chern character $v$, $F(E)$ is a $\theta$-semistable representation $f_{1}, f_{2}, f_{3}, f_{4}:\mathbb{C}^{3}\longrightarrow\mathbb{C}^{2}$. We have an obvious exact sequence
\begin{center}
$\begin{CD}
0 @>>> \mathbb{C}^{3} @>>> \mathbb{C}^{3}\\
@VVV @V f_{i}VV @VVV \\
\mathbb{C}^{2} @>>> \mathbb{C}^{2} @>>> 0
\end{CD}$
\end{center}
in $\mathrm{Rep}(Q)$ which corresponds to an exact sequence $\mathcal{O}(-2)^{3}\longrightarrow E\longrightarrow\mathcal{O}(-3)[1]^{2}$ in $\mathscr{B}$. By applying the long exact sequence for $\mathrm{Hom}$ functor to it, we can see that $\mathrm{Ext}^{2}(E,E)=0$. But $\mathrm{Ext}^{2}(E,E)$ computes the obstruction space of $\mathbf{M}_{\sigma}$ at $E$ by \cite{Ina02} and \cite{Lie06}, so $\mathbf{M}_{\sigma}$ is smooth and hence a complex manifold. Since there is no strictly $\sigma$-semistable object, a universal family $\mathcal{U}$ of $\sigma$-semistable objects with Chern character $v$ exists on $\mathbf{M}_{\sigma}\times\mathbb{P}^{3}$, and $\mathcal{U}$ is an extension of $p^{*}\mathcal{O}(-3)^{\oplus2}[1]$ by $p^{*}\mathcal{O}(-2)^{\oplus3}$. If we denote $\mathscr{B}'$ to be the extension closure of $p^{*}\mathcal{O}(-3)^{\oplus2}[1]$ and $p^{*}\mathcal{O}(-2)^{\oplus3}$ in $\mathrm{D}^{\mathrm{b}}(\mathbf{M}_{\sigma}\times\mathbb{P}^{3})$, and denote $\mathrm{Rep}_{K_{\theta}}(Q)$ to be the category of families of quiver representations over $K_{\theta}$. Then there exists an equivalence $F_{K_{\theta}}:\mathscr{B}'\longrightarrow\mathrm{D}^{\mathrm{b}}(\mathrm{Rep}_{K_{\theta}}(Q))$ such that when restricted to a fiber $x\times\mathbb{P}^{3}$, $F_{K_{\theta}}$ is the same with $F$. Because $F_{K_{\theta}}(\mathcal{U})|_{x\times\mathbb{P}^{3}}=F(\mathcal{U}|_{x\times\mathbb{P}^{3}})$ and $\mathcal{U}|_{x\times\mathbb{P}^{3}}$ is a $\sigma$-semistable object with Chern character $v$, $F_{K_{\theta}}(\mathcal{U})|_{x\times\mathbb{P}^{3}}$ is $\theta$-semistable with dimension vector $(2,3)$. This means $F_{K_{\theta}}(\mathcal{U})$ is a family of $\theta$-semistable objects with dimension vector $(2,3)$, so it induces a morphism $\varphi:\mathbf{M}_{\sigma}\longrightarrow K_{\theta}$. As $\mathcal{U}$ is a universal family of $\sigma$-semistable objects with Chern character $v$, and $F$ is a bijection between $\sigma$-semistable objects with Chern character $v$ in $\mathscr{B}$ and $\theta$-semistable representations with dimension vector $(2,3)$, $\varphi$ is a bijective morphism. We proved that $K_{\theta}$ is smooth in Proposition $3.1$, and any bijiective morphism between complex manifolds is an isomorphism, so $\varphi$ is an isomorphism. Therefore $K_{\theta}$ is isomorphic to $\mathbf{M}_{1}$.
\end{proof}

\section{The Second Wall-crossing}
In this section, we study the second wall-crossing and prove $(3)$ in the Main Theorem. To be more precise, we will prove the following theorem. Let $V$ be a plane in $\mathbb{P}^{3}$ and $p$ be a point in $\mathbb{P}^{3}$.
\begin{theorem}
The second wall-crossing is simple with a family of pairs of destabilizing objects $(\mathcal{I}_{p}(-1)$, $\mathcal{O}_{V}(-3))$. The moduli space of semistable objects after the wall-crossing is a projective variety $\mathbf{M}_{2}$. $\mathbf{M}_{2}$ has two irreducible components $\mathbf{B}$ and $\mathbf{P}$, where $\mathbf{P}$ is a $\mathbb{P}^{9}$-bundle over $\mathbb{P}^{3}\times(\mathbb{P}^{3})^{*}$ and $\mathbf{B}$ is the blow-up of $\mathbf{M}_{1}$ along a $5$-dimensional smooth center. The two components of $\mathbf{M}_{2}$ intersect transversally along the exceptional divisor of $\mathbf{B}$.
\label{zhu1}
\end{theorem}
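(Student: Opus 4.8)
The plan is to exhibit $\mathbf{M}_2$ as the union of two explicitly constructed families and then analyze their overlap by deformation theory. First I would check that the wall is simple in the sense of Definition \ref{lihai}, with $\mathbf{M}_A=\mathbb{P}^3$ parametrizing the objects $\mathcal{I}_p(-1)$ and $\mathbf{M}_B=(\mathbb{P}^3)^{*}$ parametrizing the $\mathcal{O}_V(-3)$; that these are the only stable objects with the relevant Chern characters and that $v_A+v_B=v$ are taken from \cite{SchB15}. The homological input I need is the two extension groups, which I would compute from the resolutions $0\to\mathcal{I}_p(-1)\to\mathcal{O}(-1)\to\mathcal{O}_p\to0$ and $0\to\mathcal{O}(-4)\to\mathcal{O}(-3)\to\mathcal{O}_V(-3)\to0$, reducing everything to line-bundle cohomology on $\mathbb{P}^3$ and on $V\cong\mathbb{P}^2$ together with a local computation of $\mathrm{Ext}^{\bullet}(\mathcal{O}_p,\mathcal{O}_V(-3))$. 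I expect this to give
\[
\dim\mathrm{Ext}^{1}(\mathcal{I}_p(-1),\mathcal{O}_V(-3))=\begin{cases}1,&p\in V,\\0,&p\notin V,\end{cases}\qquad \dim\mathrm{Ext}^{1}(\mathcal{O}_V(-3),\mathcal{I}_p(-1))=10.
\]
The first value identifies the modified locus $H\subset\mathbf{M}_1$ of Proposition \ref{zhongyao}(1): a nontrivial forward extension $0\to\mathcal{O}_V(-3)\to E\to\mathcal{I}_p(-1)\to0$ exists and is unique precisely when $p\in V$, so $H$ is the incidence variety $\{(p,V):p\in V\}$, a smooth $5$-fold. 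The second value shows that the relative $\mathscr{E}xt^{1}$-sheaf of the reverse extensions is locally free of rank $10$, so its projectivization is a $\mathbb{P}^9$-bundle $\mathbf{P}$ over $\mathbb{P}^3\times(\mathbb{P}^3)^{*}$.

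Next I would build the two maps to $\mathbf{M}_2$. The universal reverse extension on $\mathbf{P}$ is a family of $\lambda_2$-stable objects by Definition \ref{lihai}(3), so it induces $\Phi_{\mathbf{P}}:\mathbf{P}\to\mathbf{M}_2$, which I would prove is a closed embedding by recovering $(p,V)$ and the spanned extension line from the object. On the other side, the objects of $\mathbf{M}_1\setminus H$ stay $\lambda_2$-stable and embed, and I would extend the universal family of $\mathbf{M}_1$ across $H$ by an elementary modification supported on the blow-up $\mathbf{B}=\mathrm{Bl}_H\mathbf{M}_1$; this produces $\Phi_{\mathbf{B}}:\mathbf{B}\to\mathbf{M}_2$, to be shown a closed embedding (Proposition \ref{zhongyao}(2) and Proposition \ref{ruyao}(2)). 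Because the wall is simple, every $\lambda_2$-stable object lies in $\mathbf{M}_1\setminus H$ or is a reverse extension, so $\mathbf{M}_2=\mathrm{Im}\,\Phi_{\mathbf{B}}\cup\mathrm{Im}\,\Phi_{\mathbf{P}}$; this forces exactly the two asserted components, with $\mathbf{P}$ integral of dimension $15$ and $\mathbf{B}$ integral of dimension $12$ by Theorem \ref{ritian}.

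It then remains to locate the intersection and prove transversality. Since $H$ has codimension $7$ in the smooth $12$-fold $\mathbf{M}_1$, the exceptional divisor $\mathbf{E}$ of $\mathbf{B}$ is a $\mathbb{P}^6$-bundle over $H$ of dimension $11$. I would show $\mathbf{B}\cap\mathbf{P}=\mathbf{E}$ by identifying, over each $(p,V)$ with $p\in V$, the sub-$\mathbb{P}^6\subset\mathbb{P}^9$ consisting of those reverse extensions that arise as limits of objects of $\mathbf{M}_1\setminus H$, and matching it with $\mathbb{P}(N_{H/\mathbf{M}_1})$ through a natural isomorphism of $N_{H/\mathbf{M}_1,E}$ with a $7$-dimensional subspace of $\mathrm{Ext}^{1}(\mathcal{O}_V(-3),\mathcal{I}_p(-1))$; this is the $\mathrm{Ext}$-bookkeeping of Remark \ref{fadian}, Remark \ref{mafuyu}(1), and Proposition \ref{ruyao}(1), which simultaneously shows both embeddings are isomorphisms away from $\mathbf{E}$.

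Finally, for transversality (Proposition \ref{gan}) I would take $F\in\mathbf{E}$, sitting in $0\to\mathcal{I}_p(-1)\to F\to\mathcal{O}_V(-3)\to0$ with $p\in V$, and compute $T_{[F]}\mathbf{M}_2=\mathrm{Ext}^{1}(F,F)$ by running the four groups $\mathrm{Ext}^{\bullet}(\mathcal{I}_p(-1),\mathcal{I}_p(-1))$, $\mathrm{Ext}^{\bullet}(\mathcal{O}_V(-3),\mathcal{O}_V(-3))$, $\mathrm{Ext}^{\bullet}(\mathcal{I}_p(-1),\mathcal{O}_V(-3))$ and $\mathrm{Ext}^{\bullet}(\mathcal{O}_V(-3),\mathcal{I}_p(-1))$ through the long exact sequences of the defining triangle. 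This should give $\dim\mathrm{Ext}^{1}(F,F)=16=\dim\mathbf{B}+\dim\mathbf{P}-\dim\mathbf{E}$, after which transversality amounts to showing that the geometrically defined subspaces $T_{[F]}\mathbf{B}$ and $T_{[F]}\mathbf{P}$ together span $\mathrm{Ext}^{1}(F,F)$ and meet exactly in $T_{[F]}\mathbf{E}$. The hard part, I expect, is controlling the connecting homomorphism $\mathrm{Ext}^{1}(F,\mathcal{O}_V(-3))\to\mathrm{Ext}^{2}(F,\mathcal{I}_p(-1))$ in this sequence: a nonzero map here would pull $\dim\mathrm{Ext}^{1}(F,F)$ below $16$ and break transversality, so proving its vanishing and then matching the resulting pieces of $\mathrm{Ext}^{1}(F,F)$ with the tangent spaces of the two components is the technical core of the whole theorem.
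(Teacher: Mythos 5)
Your architecture matches the paper's almost step for step: the $\mathrm{Ext}$ computations of Lemma \ref{diao} identifying $H$ and the rank-$10$ relative $\mathscr{E}xt^{1}$ sheaf, the two families (universal reverse extension on $\mathbf{P}$, elementary modification on $\mathbf{B}=\mathrm{Bl}_{H}\mathbf{M}_{1}$), the covering of $\mathbf{M}_{2}$ by their images via simplicity of the wall, and the identification of the intersection with the exceptional divisor through the embedding $\mathcal{N}_{H/\mathbf{M}_{1}}\hookrightarrow\mathscr{E}xt^{1}_{\pi_{H}}(\mathcal{G}_{H},\mathcal{F}_{H})$ are exactly Propositions \ref{dadiao}--\ref{ruyao}. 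The dimension counts ($\dim\mathbf{E}=11$, $\dim\mathrm{Ext}^{1}(F,F)=16$ on the exceptional divisor, $15$ off it) are also correct, and you have correctly located where the difficulty sits in computing $\mathrm{Ext}^{1}(F,F)$; the paper's actual mechanism is the map $\theta_{F}:\mathrm{Ext}^{1}(F,F)\to\mathrm{Ext}^{1}(A,B)$, whose nonvanishing characterizes $F\in\mathbb{P}(\mathcal{N}_{H/\mathbf{M}_{1}}^{*})$ and whose kernel is $T_{\mathbf{P},F}$ (Corollary \ref{ciyao} and Proposition \ref{cao}), rather than a direct attack on a connecting homomorphism.

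The genuine gap is in the last step. You reduce transversality to a statement about Zariski tangent spaces: that $T_{F}\mathbf{B}$ and $T_{F}\mathbf{P}$ span $\mathrm{Ext}^{1}(F,F)=\mathbb{C}^{16}$ and meet in $T_{F}\mathbf{E}=\mathbb{C}^{11}$. That linear-algebra condition is necessary but not sufficient: it constrains only the first-order neighborhood of $F$ in $\mathbf{M}_{2}$, and is compatible with $\mathbf{M}_{2}$ being non-reduced or carrying embedded structure along $\mathbf{E}$, in which case $\mathbf{M}_{2}$ would not be a \emph{variety} and the two components would not meet scheme-theoretically transversally inside it. What the paper actually does (Propositions \ref{mua}--\ref{gan} and Corollary 4.21) is compute the quadratic part of the Kuranishi map, $\kappa_{2}(\zeta)=\zeta\cup\zeta$, against an explicit decomposition of $\mathrm{Ext}^{1}(F,F)$ adapted to the geometry (Proposition \ref{miao}, Lemmas \ref{kao} and \ref{ri}), showing that the locus of first-order deformations lifting to second order is cut out by the four quadrics $u_{1}u_{2},u_{1}u_{3},u_{1}u_{4},u_{1}u_{5}$ and that the four classes $(v_{F}+s_{i})\cup(v_{F}+s_{i})$ are linearly independent in $\mathrm{Ext}^{2}(F,F)$. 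This second-order analysis is what pins down the local ring of $\mathbf{M}_{2}$ at $F$ as the reduced transversal union $\mathbb{C}^{12}\cup_{\mathbb{C}^{11}}\mathbb{C}^{15}\subset\mathbb{C}^{16}$, and it is also the input for the claim that $\mathbf{M}_{2}$ is a projective variety (reducedness in Theorem 4.22). Your proposal omits this Yoneda-pairing computation entirely, and without it the transversality and reducedness assertions of the theorem are not established.
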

Throughout this section, we fix the family of pairs of destabilizing objects to be 
\begin{equation}
\left(A, B\right)=\left(\mathcal{I}_{p}(-1),\mathcal{O}_{V}(-3)\right),\nonumber
\end{equation}and denote the stability conditions in the chamber of $\mathbf{M}_{1}$ (resp. $\mathbf{M}_{2}$) by $\lambda_{1}$ (resp. $\lambda_{2}$). Whenever we take an extension of $A$ and $B$, we always mean a nontrivial extension class modulo scalar multiplications. The following Hom and Ext group computations are straightforward.
\begin{lemma}
$\mathrm{Hom}(A,B)=\mathrm{Hom}(B,A)=0$, $\mathrm{Hom}(A,A)=\mathrm{Hom}(B,B)=\mathbb{C}$;

$\mathrm{Ext}^{1}(A,B)=\mathbb{C}$ if $p\in V$, and $0$ otherwise,
 
$\mathrm{Ext}^{1}(A,A)=\mathrm{Ext}^{1}(B,B)=\mathbb{C}^{3}$, $\mathrm{Ext}^{1}(B,A)=\mathbb{C}^{10}$; 
 
$\mathrm{Ext}^{2}(A,B)=\mathbb{C}$, $\mathrm{Ext}^{2}(B,B)=0$, $\mathrm{Ext}^{2}(A,A)=\mathbb{C}^{3}$, $\mathrm{Ext}^{2}(B,A)=0$;
 
$\mathrm{Ext}^{3}(A,B)=\mathrm{Ext}^{3}(A,A)=\mathrm{Ext}^{3}(B,B)=\mathrm{Ext}^{3}(B,A)=0$.
\label{diao}
\end{lemma}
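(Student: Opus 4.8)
The plan is to compute the four graded groups $\mathrm{Ext}^\bullet(A,A)$, $\mathrm{Ext}^\bullet(B,B)$, $\mathrm{Ext}^\bullet(A,B)$, $\mathrm{Ext}^\bullet(B,A)$ by dévissage, reducing everything to the cohomology of line bundles on $\mathbb{P}^3$ and $\mathbb{P}^2$ together with one local computation at the point $p$. First I would strip the twists: since tensoring by a line bundle is an exact autoequivalence, $\mathrm{Ext}^\bullet(A,A)=\mathrm{Ext}^\bullet(\mathcal{I}_p,\mathcal{I}_p)$ and $\mathrm{Ext}^\bullet(B,B)=\mathrm{Ext}^\bullet(\mathcal{O}_V,\mathcal{O}_V)$, while for the mixed groups I keep a single twist, namely $\mathrm{Ext}^\bullet(A,B)=\mathrm{Ext}^\bullet(\mathcal{I}_p,\mathcal{O}_V(-2))$ and $\mathrm{Ext}^\bullet(B,A)=\mathrm{Ext}^\bullet(\mathcal{O}_V,\mathcal{I}_p(2))$. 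The two structural short exact sequences
\begin{equation*}
0\to\mathcal{I}_p\to\mathcal{O}_{\mathbb{P}^3}\to\mathcal{O}_p\to0,\qquad 0\to\mathcal{O}_{\mathbb{P}^3}(-1)\to\mathcal{O}_{\mathbb{P}^3}\to\mathcal{O}_V\to0
\end{equation*}
will drive every long exact sequence. Throughout, Serre duality on $\mathbb{P}^3$ (with $\omega=\mathcal{O}(-4)$) and the Euler characteristics $\chi(-,-)$ from Riemann--Roch serve both to halve the work and to pin down the groups that the long exact sequences leave ambiguous.

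For $\mathrm{Ext}^\bullet(B,B)$ I would use the local-to-global spectral sequence: since $V$ is a smooth divisor, $\mathscr{E}xt^0(\mathcal{O}_V,\mathcal{O}_V)=\mathcal{O}_V$, $\mathscr{E}xt^1(\mathcal{O}_V,\mathcal{O}_V)=\mathcal{N}_{V/\mathbb{P}^3}=\mathcal{O}_V(1)$, and higher local Ext vanish, so everything reduces to $H^\bullet(\mathbb{P}^2,\mathcal{O})$ and $H^\bullet(\mathbb{P}^2,\mathcal{O}(1))$, giving $(\mathbb{C},\mathbb{C}^3,0,0)$. For $\mathrm{Ext}^\bullet(A,A)$ I would feed the point sequence into $\mathrm{Hom}(\mathcal{I}_p,-)$, after computing $\mathrm{Ext}^\bullet(\mathcal{I}_p,\mathcal{O})$ and $\mathrm{Ext}^\bullet(\mathcal{I}_p,\mathcal{O}_p)$; the essential local input is $\mathrm{Ext}^\bullet(\mathcal{O}_p,\mathcal{O}_p)=\Lambda^\bullet\mathbb{C}^3$ of dimensions $1,3,3,1$. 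Serre duality gives $\mathrm{Ext}^3(\mathcal{I}_p,\mathcal{I}_p)=\mathrm{Hom}(\mathcal{I}_p,\mathcal{I}_p(-4))^\ast=0$ (using $\mathscr{H}om(\mathcal{I}_p,\mathcal{I}_p)=\mathcal{O}$), and the connecting map $\mathrm{Hom}(\mathcal{I}_p,\mathcal{O})\to\mathrm{Hom}(\mathcal{I}_p,\mathcal{O}_p)$ vanishes because the composite $\mathcal{I}_p\hookrightarrow\mathcal{O}\to\mathcal{O}_p$ is zero; together with $\chi(\mathcal{I}_p,\mathcal{I}_p)=1$ this forces $(\mathbb{C},\mathbb{C}^3,\mathbb{C}^3,0)$.

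The mixed groups are the heart of the matter. For $\mathrm{Ext}^\bullet(B,A)=\mathrm{Ext}^\bullet(\mathcal{O}_V,\mathcal{I}_p(2))$ I would apply $\mathrm{Hom}(-,\mathcal{I}_p(2))$ to the plane sequence, reducing to $H^\bullet(\mathcal{I}_p(2))=(\mathbb{C}^9,0,0,0)$ and $H^\bullet(\mathcal{I}_p(3))=(\mathbb{C}^{19},0,0,0)$, read off from the point sequence and $\dim H^0(\mathcal{O}(d))=\binom{d+3}{3}$. The only nonzero connecting map is multiplication by a linear form $s$ cutting out $V$; as a map of spaces of polynomials it is injective, so $\mathrm{Hom}(B,A)=0$ and $\mathrm{Ext}^1(B,A)=\mathrm{coker}=\mathbb{C}^{19-9}=\mathbb{C}^{10}$, with the remaining groups vanishing, independently of the incidence of $p$ and $V$. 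For $\mathrm{Ext}^\bullet(A,B)=\mathrm{Ext}^\bullet(\mathcal{I}_p,\mathcal{O}_V(-2))$ the cohomology $H^\bullet(\mathcal{O}_V(-2))=H^\bullet(\mathbb{P}^2,\mathcal{O}(-2))$ vanishes identically, so the point sequence gives $\mathrm{Ext}^i(\mathcal{I}_p,\mathcal{O}_V(-2))\cong\mathrm{Ext}^{i+1}(\mathcal{O}_p,\mathcal{O}_V(-2))$. This last group is supported at $p$ and is computed locally: in coordinates with $V=\{z=0\}$, resolving $\mathcal{O}_p$ by the Koszul complex on $(x,y,z)$ and a short Tor/Künneth computation give $\mathrm{Ext}^\bullet(\mathcal{O}_p,\mathcal{O}_V(-2))=(0,0,\mathbb{C},\mathbb{C})$ when $p\in V$ and $0$ when $p\notin V$, the dichotomy being exactly whether the relevant connecting map, multiplication by $s(p)$, is an isomorphism. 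This yields $\mathrm{Ext}^1(A,B)$ as stated, and the same local computation gives $\mathrm{Ext}^2(A,B)=\mathbb{C}$ in the geometrically relevant case $p\in V$, with $\mathrm{Hom}(A,B)$ and $\mathrm{Ext}^3(A,B)$ vanishing.

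The main obstacle is the local computation of $\mathrm{Ext}^\bullet(\mathcal{O}_p,\mathcal{O}_V(-2))$ and, more generally, the honest identification of which connecting homomorphisms vanish and which are the incidence-dependent multiplication maps by $s$ and $s(p)$. These maps are what make the $(A,B)$ column sensitive to whether $p\in V$, whereas the $(B,A)$, $(A,A)$, and $(B,B)$ groups come out unconditionally; I expect tracking them carefully, rather than the individual cohomology calculations, to be the delicate point. Everywhere the Euler characteristics from Riemann--Roch and Serre duality act as a safety net, letting me determine a group from its neighbors in a long exact sequence without computing every differential by hand.
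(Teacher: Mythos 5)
Your proposal is correct, and there is nothing in the paper to compare it against: the paper's entire ``proof'' of Lemma \ref{diao} is the sentence preceding it, declaring the computations straightforward. Your d\'evissage --- stripping twists, running the two structural sequences for $\mathcal{I}_p$ and $\mathcal{O}_V$ through the long exact sequences, using the local-to-global spectral sequence for $\mathrm{Ext}^{\bullet}(\mathcal{O}_V,\mathcal{O}_V)$, the Koszul resolution of $\mathcal{O}_p$ for the local groups $\mathrm{Ext}^{\bullet}(\mathcal{O}_p,\mathcal{O}_V(-2))$, and Serre duality plus $\chi$ to pin down the ambiguous terms --- reproduces every entry of the lemma. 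In particular you correctly identify the only subtlety the lemma glosses over: the stated $\mathrm{Ext}^2(A,B)=\mathbb{C}$ holds only when $p\in V$ (it vanishes for disjoint supports), which is harmless since the paper only invokes it over points of the incidence variety $H$ in Proposition \ref{dadiao}.
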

\smallskip

\noindent\textbf{Moduli space of nontrivial extensions.} In this subsection, we construct two moduli spaces $H$ and $\mathbf{P}$, where $H$ parametrizes nontrivial extensions of $A$ by $B$ and $\mathbf{P}$ parametrizing the reverse nontrivial extensions. We show that with the universal extensions on those moduli spaces, $H$ is embedded into $\mathbf{M}_{1}$ and $\mathbf{P}$ is embedded into $\mathbf{M}_{2}$. Then we do some detailed comutations on $\mathrm{Ext}$ groups for later uses.

We recall the comments after Definition \ref{lihai}: the second wall-crossing is simple and we have $\mathbf{M}_{A}=\mathbb{P}^{3}$ parametrizing $\mathcal{I}_{p}(-1)$ and $\mathbf{M}_{B}=(\mathbb{P}^{3})^{*}$ parametrizing $\mathcal{O}_{V}(-3)$. We denote the universal family of semistable objects with Chern character $v_{A}$ on $\mathbf{M}_{A}\times\mathbb{P}^{3}$ by $\mathcal{U}_{A}$, and the universal family of semistable objects with Chern character $v_{B}$ on $\mathbf{M}_{B}\times\mathbb{P}^{3}$ by $\mathcal{U}_{B}$. Denote two projections by
\begin{equation*}
\mathbf{M}_{A}\times\mathbb{P}^{3}\overset{\pi_{A}}{\longleftarrow}\mathbf{M}_{A}\times\mathbf{M}_{B}\times\mathbb{P}^{3}\overset{\pi_{B}}{\longrightarrow}\mathbf{M}_{B}\times\mathbb{P}^{3}.
\end{equation*}
We also denote the projection onto the first two factors by $\mathbf{M}_{A}\times\mathbf{M}_{B}\times\mathbb{P}^{3}\overset{\pi}{\longrightarrow}\mathbf{M}_{A}\times\mathbf{M}_{B}$. Let $H$ be the incidence hyperplane $\{(p,V)\in\mathbb{P}^{3}\times(\mathbb{P}^{3})^{*}|p\in V\}$, and denote the restriction of the above three projections to $H\times\mathbb{P}^{3}$ by $\pi_{A}^{H}$, $\pi_{B}^{H}$ and $\pi_{H}$. Define $\mathcal{F}$ to be $\pi_{A}^{*}\mathcal{U}_{A}$ and $\mathcal{G}$ to be $\pi_{B}^{*}\mathcal{U}_{B}$, and define $\mathcal{F}_{H}$ to be $\left(\pi_{A}^{H}\right)^{*}\mathcal{U}_{A}$ and $\mathcal{G}_{H}$ to be $\left(\pi_{B}^{H}\right)^{*}\mathcal{U}_{B}$. Let $S\longrightarrow\mathbf{M}_{A}\times\mathbf{M}_{B}$ and $S_{H}\longrightarrow H$ be any morphisms of schemes, and denote the pullbacks of these two morphisms with respect to $\pi$ and $\pi_{H}$ by $q^{S}$ and $q_{H}^{S}$.

\begin{propo}
There exists an extension on $H\times\mathbb{P}^{3}$
\begin{equation}
0\longrightarrow\mathcal{G}_{H}\otimes \pi_{H}^{*}\mathcal{L}\longrightarrow\mathcal{U}_{E}\longrightarrow\mathcal{F}_{H}\longrightarrow0,
\end{equation}
$\mathcal{L}=\mathscr{E}xt^{1}_{\pi_{H}}(\mathcal{F}_{H},\mathcal{G}_{H})^{*}$ is a line bundle, which is universal on the category of noetherian $H$-schemes for the classes of nontrivial extensions of $\left(q_{H}^{S}\right)^{*}\mathcal{F}_{H}$ by $\left(q_{H}^{S}\right)^{*}\mathcal{G}_{H}$ on $\left(H\times\mathbb{P}^{3}\right)\times_{H}S_{H}$, modulo the scalar mutiplication of $H^{0}(S_{H},\mathcal{O}_{S_{H}}^{*})$.
\label{dadiao}
\end{propo}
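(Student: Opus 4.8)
The plan is to build $\mathcal{U}_E$ as the tautological extension attached to the relative $\mathrm{Ext}$-sheaf, in the spirit of the classical theory of universal families of extensions (Lange). Since $A=\mathcal{I}_p(-1)$ and $B=\mathcal{O}_V(-3)$ are honest coherent sheaves and $\mathcal{U}_A,\mathcal{U}_B$ are flat over their respective projective-space factors, the pullbacks $\mathcal{F}_H$ and $\mathcal{G}_H$ are coherent sheaves on $H\times\mathbb{P}^3$, flat over $H$, and $\pi_H$ is smooth and projective. The first step is to control the relative sheaves $\mathscr{E}xt^q_{\pi_H}(\mathcal{F}_H,\mathcal{G}_H)$. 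By Lemma \ref{diao}, for every $(p,V)\in H$ (so $p\in V$) one has $\mathrm{Hom}(A,B)=0$ and $\mathrm{Ext}^1(A,B)=\mathbb{C}$. Working with a perfect complex on $H$ representing $R\pi_{H*}R\mathcal{H}om(\mathcal{F}_H,\mathcal{G}_H)$ and applying cohomology and base change, the fiberwise vanishing of $\mathrm{Hom}$ forces $\mathscr{E}xt^0_{\pi_H}(\mathcal{F}_H,\mathcal{G}_H)=0$, while the constancy of $\dim\mathrm{Ext}^1=1$ forces $\mathcal{L}^{*}:=\mathscr{E}xt^1_{\pi_H}(\mathcal{F}_H,\mathcal{G}_H)$ to be a line bundle whose formation commutes with arbitrary base change. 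This already shows $\mathcal{L}$ is a line bundle.

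Next I would produce the universal class. Applying the local-to-global (Grothendieck--Leray) spectral sequence for the relative $\mathrm{Ext}$ of the twisted target $\mathcal{G}_H\otimes\pi_H^{*}\mathcal{L}$, and using the projection formula $\mathscr{E}xt^q_{\pi_H}(\mathcal{F}_H,\mathcal{G}_H\otimes\pi_H^{*}\mathcal{L})=\mathscr{E}xt^q_{\pi_H}(\mathcal{F}_H,\mathcal{G}_H)\otimes\mathcal{L}$, the $E_2$-page satisfies $E_2^{p,0}=H^p(H,\mathscr{E}xt^0_{\pi_H}(\mathcal{F}_H,\mathcal{G}_H)\otimes\mathcal{L})=0$ and
\[
E_2^{0,1}=H^0\bigl(H,\mathscr{E}xt^1_{\pi_H}(\mathcal{F}_H,\mathcal{G}_H)\otimes\mathcal{L}\bigr)=H^0(H,\mathcal{L}^{*}\otimes\mathcal{L})=H^0(H,\mathcal{O}_H)=\mathbb{C},
\]
the last equality because $H$ is integral and proper. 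The vanishing of the $E_2^{p,0}$ terms makes the sequence degenerate in the relevant range and yields a canonical isomorphism $\mathrm{Ext}^1_{H\times\mathbb{P}^3}(\mathcal{F}_H,\mathcal{G}_H\otimes\pi_H^{*}\mathcal{L})\cong H^0(H,\mathcal{O}_H)$. I define $\mathcal{U}_E$ to be the extension corresponding to $1\in H^0(H,\mathcal{O}_H)$, equivalently to $\mathrm{id}_{\mathcal{L}}$ under the evaluation isomorphism $\mathcal{L}^{*}\otimes\mathcal{L}\cong\mathcal{O}_H$. Restricting along a point $(p,V)\in H$, and using that $\mathcal{L}^{*}$ commutes with base change, this class restricts to a generator of $\mathrm{Ext}^1(A,B)\otimes\mathcal{L}_{(p,V)}$, hence to a nontrivial extension on each fiber.

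Finally I would verify the universal property. Let $g:S_H\to H$ be a morphism of noetherian schemes and let $0\to (q^S_H)^{*}\mathcal{G}_H\to\mathcal{E}\to(q^S_H)^{*}\mathcal{F}_H\to0$ be a fiberwise nontrivial extension on $(H\times\mathbb{P}^3)\times_H S_H$. Because $\mathscr{E}xt^0_{\pi_H}=0$ and $\mathscr{E}xt^1_{\pi_H}=\mathcal{L}^{*}$ is locally free, their formation commutes with $g$, so the same spectral sequence over $S_H$ identifies the class of $\mathcal{E}$ with a section $s\in H^0(S_H,g^{*}\mathcal{L}^{*})$. Fiberwise nontriviality means $s$ is nowhere vanishing, i.e. a trivialization $g^{*}\mathcal{L}\xrightarrow{\sim}\mathcal{O}_{S_H}$; twisting the pullback $g^{*}\mathcal{U}_E$ by $s$ identifies it with $\mathcal{E}$ as an extension of $(q^S_H)^{*}\mathcal{F}_H$ by $(q^S_H)^{*}\mathcal{G}_H$, since the two have the same class. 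Two trivializations differ by an element of $H^0(S_H,\mathcal{O}^{*}_{S_H})$, which accounts precisely for the scalar ambiguity in the statement. The \emph{main obstacle} is exactly this base-change step over an arbitrary, possibly non-reduced, noetherian $H$-scheme: the entire argument rests on $\mathscr{E}xt^0_{\pi_H}(\mathcal{F}_H,\mathcal{G}_H)=0$ together with local freeness of $\mathscr{E}xt^1_{\pi_H}(\mathcal{F}_H,\mathcal{G}_H)$, which are what guarantee that these relative sheaves commute with base change and that the local-to-global spectral sequence degenerates uniformly in families; once this is secured through the perfect complex representing $R\pi_{H*}R\mathcal{H}om(\mathcal{F}_H,\mathcal{G}_H)$ and the fiberwise dimension count of Lemma \ref{diao}, both the construction and its universality follow formally.
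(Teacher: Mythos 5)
Your construction is in substance the same as the paper's, except that where the paper simply cites Lange's results on universal families of extensions ([Lan85, Proposition 4.2; Corollary 4.5]) for both the existence of $\mathcal{U}_{E}$ and its universal property, you unpack that black box: the identification $\mathrm{Ext}^{1}_{H\times\mathbb{P}^{3}}(\mathcal{F}_{H},\mathcal{G}_{H}\otimes\pi_{H}^{*}\mathcal{L})\cong H^{0}(H,\mathcal{L}^{*}\otimes\mathcal{L})$ via the relative local-to-global spectral sequence, the choice of the tautological class corresponding to $\mathrm{id}_{\mathcal{L}}$, and the twist-by-a-nowhere-vanishing-section argument for universality are precisely the content of Lange's proof. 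Both routes therefore reduce to the same two hypotheses: $\mathscr{E}xt^{0}_{\pi_{H}}(\mathcal{F}_{H},\mathcal{G}_{H})=0$ and $\mathscr{E}xt^{1}_{\pi_{H}}(\mathcal{F}_{H},\mathcal{G}_{H})$ a line bundle whose formation commutes with base change. Your extra detail is harmless and even clarifying, but it buys nothing the citation does not already provide.

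There is, however, one step where your justification is stated too strongly, and it is exactly the step you single out as the main obstacle. The constancy of $\dim\mathrm{Ext}^{1}(A_{0},B_{0})=1$ does \emph{not} by itself force $\mathscr{E}xt^{1}_{\pi_{H}}(\mathcal{F}_{H},\mathcal{G}_{H})$ to be locally free and compatible with base change: for a single cohomology sheaf of a perfect complex this implication fails, because the base-change map in degree $1$ is controlled by the local freeness of the sheaves in degrees $2$ and $3$. The paper is careful here and runs the descending induction of [Lan85, Theorem 1.4]: first $\mathrm{Ext}^{3}(A_{0},B_{0})=0$ kills $\mathscr{E}xt^{3}_{\pi_{H}}$, then the constancy $\mathrm{Ext}^{2}(A_{0},B_{0})=\mathbb{C}$ (which is nonzero, so this is a genuine check, not a vanishing) makes $\mathscr{E}xt^{2}_{\pi_{H}}$ a line bundle, and only then does one descend to conclude that $\mathscr{E}xt^{1}_{\pi_{H}}$ is a line bundle and finally that $\mathscr{E}xt^{0}_{\pi_{H}}=0$. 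Lemma \ref{diao} supplies exactly the two higher-degree computations you omitted, so the gap is easily filled; but as written, your derivation of the key local-freeness and base-change statement skips the part of the argument that actually requires input.
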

\begin{proof}
We apply [Lan85, Proposition 4.2; Corollary 4.5] to $\mathcal{F}_{H}$, $\mathcal{G}_{H}$ and $\pi_{H}$. We only need to check that $\mathscr{E}xt^{0}_{\pi_{H}}(\mathcal{F}_{H},\mathcal{G}_{H})=0$ and $\mathscr{E}xt^{1}_{\pi_{H}}(\mathcal{F}_{H},\mathcal{G}_{H})$ commutes with base change in the sense that over any point $(p_{0},V_{0})\in H$, $\mathscr{E}xt^{1}_{\pi_{H}}(\mathcal{F}_{H},\mathcal{G}_{H})$ restricts to $\mathrm{Ext}^{1}(A_{0},B_{0})$. First notice that $\mathscr{E}xt^{3}_{\pi_{H}}(\mathcal{F}_{H},\mathcal{G}_{H})$ restricts to $\mathrm{Ext}^{3}(A_{0}, B_{0})$ over $(p_{0},V_{0})$, where the latter is $0$ by Lemma $4.1$. Then [Lan85, Theorem 1.4] tells us $\mathscr{E}xt^{2}_{\pi_{H}}(\mathcal{F}_{H},\mathcal{G}_{H})$ restricts to $\mathrm{Ext}^{2}(A_{0}, B_{0})$ over $(p_{0},V_{0})$, where the latter is $\mathbb{C}$ for all points in $H$. Hence $\mathscr{E}xt^{2}_{\pi_{H}}(\mathcal{F}_{H},\mathcal{G}_{H})$ is a line bundle. Again [Lan85, Theorem 1.4] tells us $\mathscr{E}xt^{1}_{\pi_{H}}(\mathcal{F}_{H},\mathcal{G}_{H})$ restricts to $\mathrm{Ext}^{1}(A_{0}, B_{0})$ over $(p_{0},V_{0})$. By Lemma $4.1$ we have $\mathrm{Ext}^{1}(A_{0}, B_{0})=\mathbb{C}$ for all points in $H$, so $\mathscr{E}xt^{1}_{\pi_{H}}(\mathcal{F}_{H},\mathcal{G}_{H})$ is a line bundle. Applying [Lan85, Theorem 1.4] a third time, $\mathscr{E}xt^{0}_{\pi_{H}}(\mathcal{F}_{H},\mathcal{G}_{H})$ will restrict to $\mathrm{Hom}(A_{0},B_{0})$, where the latter is $0$ by Lemma $4.1$. Hence $\mathscr{E}xt^{0}_{\pi_{H}}(\mathcal{F}_{H},\mathcal{G}_{H})=0$.
\end{proof}
\begin{propo}
The relative Ext sheaf $\mathscr{E}xt^{1}_{\pi}(\mathcal{G},\mathcal{F})$ is locally free of rank $10$ on $\mathbf{M}_{A}\times\mathbf{M}_{B}$. If we denote its projectivization $\mathbb{P}(\mathscr{E}xt^{1}_{\pi}(\mathcal{G},\mathcal{F})^{*})$ by $\mathbf{P}$, then there exists an extension on $\mathbf{P}\times\mathbb{P}^{3}$
\begin{equation}
0\longrightarrow h^{*}\mathcal{F}\otimes\pi_{\mathbf{P}}^{*}\mathcal{O}_{\mathbf{P}}(1)\longrightarrow\mathcal{U}_{F}\longrightarrow h^{*}\mathcal{G}\longrightarrow0,
\end{equation}
$h$ is the projection $\mathbf{P}\times\mathbb{P}^{3}\longrightarrow\mathbf{M}_{A}\times\mathbf{M}_{B}\times\mathbb{P}^{3}$, $\pi_{\mathbf{P}}$ is the projection $\mathbf{P}\times\mathbb{P}^{3}\longrightarrow \mathbf{P}$ and $\mathcal{O}_{\mathbf{P}}(1)$ is the relative $\mathcal{O}(1)$ on $\mathbf{P}$, which is universal on the category of noetherian $\mathbf{M}_{A}\times\mathbf{M}_{B}$-schemes for the classes of nontrivial extensions of $\left(q^{S}\right)^{*}\mathcal{F}$ by $\left(q^{S}\right)^{*}\mathcal{G}$ on $\left(\mathbf{M}_{A}\times\mathbf{M}_{B}\times\mathbb{P}^{3}\right)\times_{\mathbf{M}_{A}\times\mathbf{M}_{B}}S$, modulo the scalar mutiplication of $H^{0}(S,\mathcal{O}_{S}^{*})$.
\label{juru}
\end{propo}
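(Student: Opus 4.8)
The plan is to mirror the proof of Proposition \ref{dadiao}, replacing the pair $(\mathcal{F}_H,\mathcal{G}_H)$ over $H$ and the projection $\pi_H$ by the pair $(\mathcal{G},\mathcal{F})$ over $\mathbf{M}_A\times\mathbf{M}_B$ and the projection $\pi$, and to invoke Lange's universal extension machinery [Lan85, Proposition 4.2; Corollary 4.5]. Concretely, once we know that $\mathscr{E}xt^0_{\pi}(\mathcal{G},\mathcal{F})=0$ and that $\mathscr{E}xt^1_{\pi}(\mathcal{G},\mathcal{F})$ is locally free (so that in particular it commutes with base change), [Lan85] produces a tautological extension over the projectivization $\mathbf{P}=\mathbb{P}(\mathscr{E}xt^1_{\pi}(\mathcal{G},\mathcal{F})^*)$ of exactly the shape stated, with the sub-object twisted by the relative hyperplane bundle $\mathcal{O}_{\mathbf{P}}(1)$, and universal for nontrivial extensions of $(q^S)^*\mathcal{F}$ by $(q^S)^*\mathcal{G}$ after arbitrary base change $S\to\mathbf{M}_A\times\mathbf{M}_B$. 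The whole content of the proof is therefore to establish the two sheaf-theoretic facts about the relative Ext sheaves.

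To do this I would run the same descending base-change argument as in Proposition \ref{dadiao}, but now using the $\mathrm{Ext}^i(B,A)$ entries of Lemma \ref{diao} instead of the $\mathrm{Ext}^i(A,B)$ entries. Since $\mathbb{P}^3$ has dimension $3$, the top relative sheaf $\mathscr{E}xt^3_{\pi}(\mathcal{G},\mathcal{F})$ automatically restricts to $\mathrm{Ext}^3(B_0,A_0)$ over each point $(p_0,V_0)\in\mathbf{M}_A\times\mathbf{M}_B$, and this is $0$ by Lemma \ref{diao}. Feeding this into [Lan85, Theorem 1.4] gives that $\mathscr{E}xt^2_{\pi}(\mathcal{G},\mathcal{F})$ restricts to $\mathrm{Ext}^2(B_0,A_0)=0$, hence vanishes. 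Applying [Lan85, Theorem 1.4] again, $\mathscr{E}xt^1_{\pi}(\mathcal{G},\mathcal{F})$ restricts to $\mathrm{Ext}^1(B_0,A_0)$, which by Lemma \ref{diao} is $\mathbb{C}^{10}$ for every point of $\mathbf{M}_A\times\mathbf{M}_B$; constancy of the fiber dimension then forces $\mathscr{E}xt^1_{\pi}(\mathcal{G},\mathcal{F})$ to be locally free of rank $10$. A final application of [Lan85, Theorem 1.4] shows $\mathscr{E}xt^0_{\pi}(\mathcal{G},\mathcal{F})$ restricts to $\mathrm{Hom}(B_0,A_0)=0$, so it vanishes as required, and Lange's construction applies.

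I should emphasize one structural simplification relative to Proposition \ref{dadiao}: there the argument had to be carried out over the incidence divisor $H$, because $\mathrm{Ext}^1(A,B)$ jumps between $\mathbb{C}$ on $H$ and $0$ off it, so local freeness held only after restriction. Here all the relevant dimensions $\mathrm{Hom}(B,A)$, $\mathrm{Ext}^1(B,A)$, $\mathrm{Ext}^2(B,A)$ and $\mathrm{Ext}^3(B,A)$ are constant over the entire product $\mathbb{P}^3\times(\mathbb{P}^3)^*$, so no restriction is needed and $\mathbf{P}$ is a genuine $\mathbb{P}^9$-bundle over $\mathbb{P}^3\times(\mathbb{P}^3)^*$. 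The only point demanding care is bookkeeping: one must apply the descending hypotheses of [Lan85, Theorem 1.4] in the correct order, each step using the base-change compatibility of the next higher relative Ext sheaf supplied either by vanishing or by the local freeness established at the previous step, and one must check that Lange's tautological extension on $\mathbf{P}$ indeed carries the twist $\pi_{\mathbf{P}}^*\mathcal{O}_{\mathbf{P}}(1)$ on the sub-object as written, rather than its dual. I do not expect a genuine obstacle; the proof is in fact shorter than that of Proposition \ref{dadiao}, precisely because the intermediate sheaf $\mathscr{E}xt^2_{\pi}(\mathcal{G},\mathcal{F})$ vanishes instead of being a line bundle.
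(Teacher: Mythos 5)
Your proposal is correct and is exactly what the paper intends: its entire proof of this proposition is the single sentence that it is ``completely analogous to the proof of Proposition \ref{dadiao}'', and you have carried out that analogy faithfully, running the descending base-change argument of [Lan85, Theorem 1.4] with the $\mathrm{Ext}^{i}(B,A)$ entries of Lemma \ref{diao} to get $\mathscr{E}xt^{0}_{\pi}(\mathcal{G},\mathcal{F})=0$ and $\mathscr{E}xt^{1}_{\pi}(\mathcal{G},\mathcal{F})$ locally free of rank $10$ before invoking Lange's universal extension. Your observation that the argument is in fact simpler here because all the relevant $\mathrm{Ext}$ dimensions are constant over the whole of $\mathbb{P}^{3}\times(\mathbb{P}^{3})^{*}$ is accurate.
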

\begin{proof}
The proof is completely analogous to the proof of Proposition \ref{dadiao}.
\end{proof}

The existence the above extension $\mathcal{U}_{E}$ (resp. $\mathcal{U_{F}}$) gives a flat family of $\lambda_{1}$-stable (resp. $\lambda_{2}$-stable) sheaves on $H$ (resp. $\mathbf{P}$), hence it induces a morphism $\varphi_{E}:H\longrightarrow \mathbf{M}_{1}$ (resp. $\varphi_{F}:\mathbf{P}\longrightarrow \mathbf{M}_{2}$).

\begin{propo}
(1) The induced morphism $\varphi_{E}$ is a closed embedding;

(2) The induced morphism $\varphi_{F}$ is injective on the level of sets and Zariski tangent spaces.
\label{zhongyao}
\end{propo}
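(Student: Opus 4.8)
The plan is to prove both statements by establishing injectivity on closed points and on Zariski tangent spaces, and then, for part $(1)$, upgrading to a closed embedding via properness. Since $H$ is projective and $\mathbf{M}_1$ is separated, $\varphi_E$ is automatically proper, and a proper morphism that is injective on closed points and unramified (i.e.\ injective on all Zariski tangent spaces) is a closed immersion. Thus part $(1)$ reduces to the same two injectivity statements as part $(2)$, together with this observation on properness. (Note that $\mathbf{M}_1$ is smooth of dimension $12$ by Theorem \ref{ritian}, so its tangent space at $E$ is $\mathrm{Ext}^1(E,E)$, while $\dim H=5$.)

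For injectivity on points I would recover the parametrizing data from the isomorphism class of the object. For $\varphi_E$, on the wall $W$ the object $E=\varphi_E(p,V)$ is strictly semistable with Jordan--H\"older factors $A=\mathcal{I}_p(-1)$ and $B=\mathcal{O}_V(-3)$; since these are stable of the same phase and $\mathrm{Hom}(A,B)=\mathrm{Hom}(B,A)=0$ by Lemma \ref{diao}, the unordered pair $\{A,B\}$ is determined by $E$, hence so is $(p,V)$. As $\mathrm{Ext}^1(A,B)=\mathbb{C}$ on $H$, the nontrivial extension is unique up to scalar, so $\varphi_E$ is injective. For $\varphi_F$ the same Jordan--H\"older argument recovers $(p,V)$ from $F=\varphi_F((p,V),[e])$; to recover $[e]\in\mathbb{P}(\mathrm{Ext}^1(B,A))$ I would use that $\mathrm{Hom}(A,F)=\mathrm{Hom}(F,B)=\mathbb{C}$, which follows from the defining sequence $0\to A\to F\to B\to 0$ together with $\mathrm{Hom}(A,B)=0$ and $\mathrm{Hom}(A,A)=\mathrm{Hom}(B,B)=\mathbb{C}$. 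Thus the canonical sub $A$ and quotient $B$ are unique up to scalar, so any isomorphism $F\cong F'$ restricts to scalars on $A$ and $B$ and forces the two extension classes to agree projectively.

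The tangent-space injectivity is the main computation. The differentials of $\varphi_E,\varphi_F$ are the Kodaira--Spencer maps of the universal extensions $\mathcal{U}_E,\mathcal{U}_F$, which I would compute from the long exact sequences obtained by applying $\mathrm{Hom}(E,-),\mathrm{Hom}(-,E)$ to $0\to B\to E\to A\to 0$ (resp.\ $\mathrm{Hom}(F,-),\mathrm{Hom}(-,F)$ to $0\to A\to F\to B\to 0$), using the groups in Lemma \ref{diao}. For $\varphi_E$, the source $T_{H,(p,V)}$ is the hyperplane in $\mathrm{Ext}^1(A,A)\oplus\mathrm{Ext}^1(B,B)=\mathbb{C}^3\oplus\mathbb{C}^3$ cut out by the incidence condition $p\in V$, which is exactly the condition that the class in $\mathrm{Ext}^1(A,B)$ survives to first order; along these directions $E$ deforms, and I would show the resulting map to $\mathrm{Ext}^1(E,E)$ is injective, the vanishing of the cross terms $\mathrm{Hom}(A,B)=\mathrm{Hom}(B,A)=0$ guaranteeing that deformations of $A$ and of $B$ contribute independently. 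For $\varphi_F$ there is no incidence constraint, since $\mathrm{Ext}^1(B,A)=\mathbb{C}^{10}$ is constant on $\mathbb{P}^3\times(\mathbb{P}^3)^*$; I would split $T_{\mathbf{P}}$ by the relative Euler sequence of the $\mathbb{P}^9$-bundle into the fiber directions $\mathrm{Ext}^1(B,A)/\langle e\rangle$ and the base directions $\mathrm{Ext}^1(A,A)\oplus\mathrm{Ext}^1(B,B)$, then check that the fiber directions inject through the class-changing map $\mathrm{Ext}^1(B,A)\to\mathrm{Ext}^1(F,F)$ and that the combined map to $\mathrm{Ext}^1(F,F)$ remains injective.

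The hard part will be this tangent computation: identifying the Kodaira--Spencer differential with the natural composition and multiplication maps appearing in the long exact sequences, and then showing the relevant kernels vanish using Lemma \ref{diao}. The most delicate point is the incidence encoding for $\varphi_E$: because $\mathrm{Ext}^1(A,B)$ jumps from $0$ to $\mathbb{C}$ precisely along $H$, a tangent vector of $\mathbf{M}_A\times\mathbf{M}_B$ is tangent to $H$ exactly when the corresponding first-order deformation preserves a nonzero class in $\mathrm{Ext}^1(A,B)$, and this must be matched against the connecting maps so that no tangent direction of $H$ maps to zero in $\mathrm{Ext}^1(E,E)$. Once injectivity on points and on tangent spaces is established, part $(1)$ follows by adjoining properness, while part $(2)$ is precisely the two injectivity statements.
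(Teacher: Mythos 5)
Your proposal is correct and reaches the same two injectivity statements as the paper, but by a genuinely different route for the tangent-space half. The paper argues formally with first-order families: if two tangent vectors of $H$ have the same image, the resulting isomorphism $\eta$ of deformed families over $\mathrm{Spec}\,\mathbb{C}[\varepsilon]/(\varepsilon^{2})$ (restricting to the identity on the central fiber) is shown, by composing with the projections $H\to\mathbf{M}_{A}$ and $H\to\mathbf{M}_{B}$, to induce compatible isomorphisms of the deformed sub- and quotient-families, and the universal property of Proposition \ref{dadiao} then forces the two maps $\mathrm{Spec}\,\mathbb{C}[\varepsilon]/(\varepsilon^{2})\to H$ to coincide; no $\mathrm{Ext}$ groups are computed at this stage. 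You instead compute the differential explicitly as a map of $\mathrm{Ext}$ groups and kill its kernel using Lemma \ref{diao}; this works, and in effect front-loads the content of Proposition \ref{KS} and Corollary \ref{ciyao}, where the paper does identify $T_{H,E}$ with $K_{E}=\ker\theta_{E}$ and $T_{\mathbf{P},F}$ with $K_{F}$. The one step you label ``the hard part'' and should actually write out is: if the Kodaira--Spencer class $\zeta\in\mathrm{Ext}^{1}(E,E)$ of the deformed family vanishes, then the induced classes $\zeta_{A}\in\mathrm{Ext}^{1}(A,A)$ and $\zeta_{B}\in\mathrm{Ext}^{1}(B,B)$ vanish too --- applying $\mathrm{Hom}(-,A[1])$ and $\mathrm{Hom}(B,-)$ to the triangle $B\to E\to A$ shows both obstructions to this lie in images of $\mathrm{Hom}(B,A)=0$, which is precisely the cross-term vanishing you invoke --- and since $T_{H}\hookrightarrow\mathrm{Ext}^{1}(A,A)\oplus\mathrm{Ext}^{1}(B,B)$ the tangent vector itself is zero; the analogous bookkeeping for $\varphi_{F}$ combines this with injectivity of $\mathrm{Ext}^{1}(B,A)/\mathbb{C}f\to\mathrm{Ext}^{1}(F,F)$ on the fiber directions. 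Your set-theoretic injectivity (Jordan--H\"older factors on the wall determine $(p,V)$; one-dimensionality of $\mathrm{Ext}^{1}(A,B)$, resp.\ uniqueness of the sub $A$ and quotient $B$ up to scalar, determines the class) is equivalent to the paper's appeal to Definition \ref{lihai}. Finally, you make explicit the properness upgrade (proper $+$ injective on points and tangent vectors $\Rightarrow$ closed immersion) that the paper leaves implicit in its concluding sentence; that is a small but genuine gain in rigor.
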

\begin{proof}
On the level of sets, $\varphi_{E}$ maps an extension $0\longrightarrow B\longrightarrow E\longrightarrow A\longrightarrow0$ to $E$. If we have two extensions $0\longrightarrow B\longrightarrow E\longrightarrow A\longrightarrow0$ and $0\longrightarrow B'\longrightarrow E'\longrightarrow A'\longrightarrow0$ such that $E\cong E'$ as stable sheaves, then $E'=E$ and this isomorphism is just a  scalar multiplication by some $c\in\mathbb{C}^{*}$. By the definition of a simple wall-crossing with a pair of destabilizing object, we must have $A'=A$ and $B'=B$. This implies that $\varphi_{E}$ is injective on the level of sets.

On the level of Zariski tangent spaces, a tangent vector $v$ of $H$ at a point $(p,V)$ can be represented by a morphism $\mathrm{Spec}\mathbb{C}[\varepsilon]/(\varepsilon^{2})\longrightarrow H$. By pulling back the universal extension $(1)$ to $\left(H\times\mathbb{P}^{3}\right)\times_{H}\mathrm{Spec}\mathbb{C}[\varepsilon]/(\varepsilon^{2})=\mathrm{Spec}\mathbb{C}[\varepsilon]/(\varepsilon^{2})\times\mathbb{P}^{3}$, we get an exact sequence of flat families
\begin{equation*}
0\longrightarrow\mathcal{G}_{\varepsilon}\longrightarrow\mathcal{E}_{\varepsilon}\longrightarrow\mathcal{F}_{\varepsilon}\longrightarrow0
\end{equation*}
and $\mathcal{G}_{\varepsilon}$, $\mathcal{E}_{\varepsilon}$ and $\mathcal{F}_{\varepsilon}$ restrict to $B$, $E$ and $A$ on the closed fiber respectively. In particular, $\mathcal{E}_{\varepsilon}$ is a flat family of $\lambda_{1}$-stable objects. It gives rise to a morphism $\mathrm{Spec}\mathbb{C}[\varepsilon]/(\varepsilon^{2})\longrightarrow\mathbf{M}_{1}$ corresponding to $T_{\varphi_{E},(p,V)}(v)$. Suppose we have two tangent vectors $v$, $v'$ represented by morphisms $\xi,\xi':\mathrm{Spec}\mathbb{C}[\varepsilon]/(\varepsilon^{2})\longrightarrow H$ and $T_{\varphi_{E},(p,V)}(v)=T_{\varphi_{E},(p,V)}(v')$. Then there exists an isomorphism $\eta:\mathcal{E}_{\varepsilon}\longrightarrow\mathcal{E}_{\varepsilon}'$ between the resulting flat families of $\lambda_{1}$-stable objects such that $\eta$ restricts to identity on the closed fiber. By \cite{Ina02} and \cite{Lie06}, $\eta$ corresponds to the following diagram in the derived category:
\begin{center}
$\begin{CD}
E @= E \\
@V\zeta VV @V\zeta'VV \\
E[1] @>c>> E[1],
\end{CD}$
\end{center}
where $c$ is a multiplication by some nonzero constant $c$. By composing $\xi$ and $\xi'$ with the natural projections \begin{equation*}
\mathbf{M}_{A}=\mathbb{P}^{3}\longleftarrow H\longrightarrow(\mathbb{P}^3)^{*}=\mathbf{M}_{B},
\end{equation*}
we can complete $\zeta$ and $\zeta'$ to commutative diagrams
\begin{center}
$\begin{CD}
B @>>> E @>>> A @.\qquad B @>>> E @>>> A \\
@VVV @V\zeta VV @VVV \qquad @VVV @V\zeta' VV @VVV \\
B[1] @>>> E[1] @>>> A[1] @.\qquad B[1] @>>> E[1] @>>> A[1],
\end{CD}$
\end{center}
Via the two diagrams, the above diagram of $\eta$ will induce two diagrams
\begin{center}
$\begin{CD}
B @= B @.\qquad A @= A \\
@V\zeta_{B}VV @V\zeta'_{B}VV \qquad @V\zeta_{A} VV @V\zeta'_{A}VV \\
B[1] @>c>> B[1] @.\qquad A[1] @>c>> A[1]
\end{CD}$
\end{center}
corresponding to isomorphisms $\eta_{B}:\mathcal{G}_{\varepsilon}\longrightarrow\mathcal{G}_{\varepsilon}'$ and $\eta_{A}:\mathcal{F}_{\varepsilon}\longrightarrow\mathcal{F}_{\varepsilon}'$ such that they restrict to identities on closed fiber and they make the following diagram commutative:
\begin{center}
$\begin{CD}
0@>>>\mathcal{G}_{\varepsilon}@>>>\mathcal{E}_{\varepsilon}@>>>\mathcal{F}_{\varepsilon}@>>>0\\
@. @V\eta_{B}VV @V\eta VV @V\eta_{A}VV @.\\
0@>>>\mathcal{G}_{\varepsilon}'@>>>\mathcal{E}_{\varepsilon}'@>>>\mathcal{F}_{\varepsilon}'@>>>0,
\end{CD}$
\end{center}
which implies the two morphisms $\xi$ and $\xi'$ are the same. Therefore $v=v'$ and $T_{\varphi_{E},E}$ is injective. This proves that $\varphi_{E}$ is a closed embedding. The proof of (2) is completely analogous to the above argument.
\end{proof}
Now we study the normal sequence of the embedding $\varphi_{E}:H\longrightarrow \mathbf{M}_{1}$. Fix a nontrivial extension $0\longrightarrow B\longrightarrow E\longrightarrow A\longrightarrow0$, then we have the following lemma.
\begin{lemma}
The following diagram is coming from taking the long exact sequences for $\mathrm{Hom}$ functor in two directions, it is commutative with exact rows and columns and all boundary homomorphisms are $0$.
\label{zheteng}
\end{lemma}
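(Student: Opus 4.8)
The plan is to generate the entire diagram from the single fixed short exact sequence $0\longrightarrow B\longrightarrow E\longrightarrow A\longrightarrow0$ by feeding it into the bifunctor $\mathrm{Hom}(-,-)$ in each of its two variables: applying $\mathrm{Hom}$ contravariantly in the first slot produces one family of long exact sequences and applying it covariantly in the second slot produces the other, so that the entries of the grid are exactly the groups $\mathrm{Ext}^{k}$ among $A$, $B$, $E$ recorded in Lemma \ref{diao}. With this description the two purely formal assertions are immediate. Commutativity of every square is the standard naturality of the connecting homomorphism of a long exact sequence under a morphism of short exact sequences, here the given sequence paired with itself in the two variables; I would cite this compatibility rather than chase any square by hand. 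Exactness of each row and of each column also requires no separate argument, since by construction each is a stretch of an honest long exact sequence.

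The real content is the vanishing of the connecting maps, and the key observation is that every boundary homomorphism occurring in the grid is, up to sign, Yoneda composition with the class $e\in\mathrm{Ext}^{1}(A,B)$ of the fixed extension: the boundaries coming from the first variable are precomposition $(-)\circ e$ and those from the second are postcomposition $e\circ(-)$. I would then run through these maps and eliminate most of them directly from the table of Lemma \ref{diao}: for the overwhelming majority either the source or the target is one of the groups that Lemma \ref{diao} computes to be zero, so the map is forced to vanish. The facts that $\mathrm{Hom}(A,B)=\mathrm{Hom}(B,A)=0$, that $\mathrm{Ext}^{2}(B,B)=\mathrm{Ext}^{2}(B,A)=0$, and that $\mathrm{Ext}^{3}$ vanishes identically are precisely what make this bookkeeping close in all but a handful of degrees; for instance every boundary $\mathrm{Ext}^{k}(B,A)\longrightarrow\mathrm{Ext}^{k+1}(B,B)$ dies because in each degree one of the two groups is zero.

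The step I expect to be the genuine obstacle is the small number of connecting maps whose source and target are both nonzero in the table, such as $(-)\circ e\colon\mathrm{Ext}^{1}(B,A)=\mathbb{C}^{10}\longrightarrow\mathrm{Ext}^{2}(A,A)=\mathbb{C}^{3}$, $e\circ(-)\colon\mathrm{Ext}^{1}(A,A)=\mathbb{C}^{3}\longrightarrow\mathrm{Ext}^{2}(A,B)=\mathbb{C}$, and $(-)\circ e\colon\mathrm{Ext}^{1}(B,B)=\mathbb{C}^{3}\longrightarrow\mathrm{Ext}^{2}(A,B)=\mathbb{C}$; here the dimension count alone is powerless and I must actually show that the Yoneda product is trivial. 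I would attack these either by Serre duality on $\mathbb{P}^{3}$, rewriting each pairing so that it is forced through one of the already-vanishing groups, or by using the explicit descriptions $A=\mathcal{I}_{p}(-1)$ and $B=\mathcal{O}_{V}(-3)$ with $p\in V$ (via locally free resolutions, or by exploiting that $E$ is genuinely a stable sheaf) to evaluate the composition with $e$ by hand; a short chase inside the portion of the grid whose boundaries have already been killed may also squeeze these maps to zero using the exactness of the neighbouring rows and columns. Once all the connecting maps are seen to vanish, each long exact sequence in the diagram breaks into short exact sequences, which is exactly the statement of the lemma and is precisely the input needed for the subsequent analysis of the normal sequence of $\varphi_{E}\colon H\longrightarrow\mathbf{M}_{1}$.
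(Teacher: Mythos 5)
Your overall framing---build the grid by applying $\mathrm{Hom}(-,-)$ to the fixed triangle in each variable, get commutativity from naturality and exactness for free, and then pin down the connecting maps---matches the intent of the lemma, but the execution goes wrong at exactly the step you flag as the ``genuine obstacle,'' and it goes wrong in the opposite direction from what you expect. The three connecting maps you propose to prove trivial, namely $e[1]\circ(-)\colon\mathrm{Ext}^{1}(A,A)\to\mathrm{Ext}^{2}(A,B)$, $(-)[1]\circ e\colon\mathrm{Ext}^{1}(B,B)\to\mathrm{Ext}^{2}(A,B)$ and $(-)[1]\circ e\colon\mathrm{Ext}^{1}(B,A)\to\mathrm{Ext}^{2}(A,A)$, are \emph{not} zero: with the dimensions displayed in the lemma, exactness of the first row ($\mathbb{C}\xrightarrow{0}\mathbb{C}^{2}\to\mathbb{C}^{3}\to\mathbb{C}$), the first column, and the third column forces each of them to be surjective. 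The rest of the paper depends on this nonvanishing: Lemma \ref{kao} uses that the cokernel of $\theta_{E}$ is all of $\mathrm{Ext}^{2}(A,A)$, i.e.\ that $(-)[1]\circ e\colon\mathrm{Ext}^{1}(B,A)\to\mathrm{Ext}^{2}(A,A)$ is onto, and Lemma \ref{ri} uses the surjection $\mathrm{Ext}^{1}(A,A)\oplus\mathrm{Ext}^{1}(B,B)\to\mathrm{Ext}^{2}(A,B)$. So a ``proof'' that these Yoneda products vanish would contradict the diagram you are trying to establish; the phrase ``all boundary homomorphisms are $0$'' cannot be read as ``all connecting homomorphisms vanish''---the maps actually asserted to vanish are the three arrows decorated with $0$ (such as $\mathrm{Ext}^{1}(A,B)\to\mathrm{Ext}^{1}(A,E)$), whose vanishing is equivalent to surjectivity of the preceding connecting maps.

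There is a second, related gap: you assert that all entries of the grid are recorded in Lemma \ref{diao}, but that lemma only computes $\mathrm{Ext}^{k}$ among $A$ and $B$. The groups involving $E$---$\mathrm{Ext}^{1}(A,E)=\mathbb{C}^{2}$, $\mathrm{Ext}^{1}(E,B)=\mathbb{C}^{2}$, $\mathrm{Ext}^{1}(E,E)=\mathbb{C}^{12}$, $\mathrm{Ext}^{1}(B,E)=\mathbb{C}^{13}$, $\mathrm{Ext}^{2}(E,B)=0$, etc.---are part of the content, and you cannot extract them from the long exact sequences of $B\to E\to A$ without already knowing the ranks of the connecting maps, which is circular. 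The paper breaks this circle by using the second description of $E$, the triangle $\mathcal{O}(-2)^{3}\to E\to\mathcal{O}(-3)[1]^{2}$ (equivalently the resolution $0\to\mathcal{O}(-3)^{2}\to\mathcal{O}(-2)^{3}\to E\to0$): all $\mathrm{Ext}^{k}(E,-)$ and $\mathrm{Ext}^{k}(-,E)$ are then elementary cohomology computations on $\mathbb{P}^{3}$, and once every dimension is known, exactness determines the rank of every map in the grid, including the connecting maps, with no Yoneda product ever evaluated by hand. Your proposal never invokes this resolution, which is the one genuinely new input the proof needs beyond Lemma \ref{diao}.
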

\noindent$\begin{CD}
\mathrm{Ext}^{1}(A,B)=\mathbb{C} @>0>> \mathrm{Ext}^{1}(A,E)=\mathbb{C}^{2} @>>> \mathrm{Ext}^{1}(A,A)=\mathbb{C}^{3} @>>> \mathrm{Ext}^{2}(A,B)=\mathbb{C} \\
@V0VV @VVV @VVV @VVV \\
\mathrm{Ext}^{1}(E,B)=\mathbb{C}^{2} @>>> \mathrm{Ext}^{1}(E,E)=\mathbb{C}^{12} @>>> \mathrm{Ext}^{1}(E,A)=\mathbb{C}^{10} @>>> \mathrm{Ext}^{2}(E,B)=0 \\
@VVV @VVV @VVV @VVV \\
\mathrm{Ext}^{1}(B,B)=\mathbb{C}^{3} @>>> \mathrm{Ext}^{1}(B,E)=\mathbb{C}^{13} @>>> \mathrm{Ext}^{1}(B,A)=\mathbb{C}^{10} @>>> \mathrm{Ext}^{2}(B,B)=0 \\
@VVV @VVV @VVV @VVV \\
\mathrm{Ext}^{2}(A,B)=\mathbb{C} @>0>> \mathrm{Ext}^{2}(A,E)=\mathbb{C}^{3} @>>> \mathrm{Ext}^{2}(A,A)=\mathbb{C}^{3} @>>> 0
\end{CD}$
\medskip

\begin{proof}
This diagram is a straightforward computation by using that $(A, B)=(\mathcal{I}_{p}(-1)$, $\mathcal{O}_{V}(-3))$ and that $E$ satisfies a triangle $\mathcal{O}(-2)^{3}\longrightarrow E\longrightarrow\mathcal{O}(-3)[1]^{2}$.
\end{proof}
The Kodaira-Spencer map $\mathrm{KS}:T_{\mathbf{M}_{1},E}\longrightarrow\mathrm{Ext}^{1}(E,E)$ is known to be an isomorphism by \cite{Ina02} and \cite{Lie06}. If we let $\theta_{E}$ to be the composition $
\mathrm{Ext}^{1}(E,E)\longrightarrow\mathrm{Ext}^{1}(E,A)\longrightarrow\mathrm{Ext}^{1}(B,A)$ (or $\mathrm{Ext}^{1}(E,E)\longrightarrow\mathrm{Ext}^{1}(B,E)\longrightarrow\mathrm{Ext}^{1}(B,A)$) in the diagram of Lemma \ref{zheteng}, and let the kernel of $\theta_{E}$ to be $K_{E}$, then we have
\begin{propo}
The Kodaira-Spencer map $\mathrm{KS}$ restricts to an isomorphism between $T_{H,E}$ and $K_{E}$, and we have the following commutative diagram:
\label{KS}
\end{propo}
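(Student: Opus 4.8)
The plan is to reduce the statement to a single inclusion by a dimension count, using that $\mathrm{KS}$ is already known to be an isomorphism on the ambient tangent space $T_{\mathbf{M}_{1},E}\overset{\sim}{\longrightarrow}\mathrm{Ext}^{1}(E,E)$. Since $\varphi_{E}$ is a closed embedding of the smooth $5$-dimensional incidence variety $H$ by Proposition \ref{zhongyao}$(1)$, the subspace $T_{H,E}\subseteq T_{\mathbf{M}_{1},E}$ has dimension $5$. On the other hand, reading the diagram of Lemma \ref{zheteng}, the first arrow $\mathrm{Ext}^{1}(E,E)\to\mathrm{Ext}^{1}(E,A)$ is surjective because $\mathrm{Ext}^{2}(E,B)=0$, while the kernel of $\mathrm{Ext}^{1}(E,A)\to\mathrm{Ext}^{1}(B,A)$ equals the image of the injection $\mathrm{Ext}^{1}(A,A)\hookrightarrow\mathrm{Ext}^{1}(E,A)$ (injective since $\mathrm{Hom}(B,A)=0$ by Lemma \ref{diao}), hence is $3$-dimensional. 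Therefore $\theta_{E}$ has rank $7$ and $\dim K_{E}=12-7=5$. Because $\mathrm{KS}$ is injective, it then suffices to prove the single inclusion $\mathrm{KS}(T_{H,E})\subseteq K_{E}$: an injective linear map between two $5$-dimensional spaces is automatically an isomorphism.

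For the inclusion I would take $v\in T_{H,E}$ and reuse the flat family from the proof of Proposition \ref{zhongyao}: pulling back the universal extension along $v$ gives a short exact sequence of flat families $0\to\mathcal{G}_{\varepsilon}\to\mathcal{E}_{\varepsilon}\to\mathcal{F}_{\varepsilon}\to0$ over $\mathrm{Spec}\,\mathbb{C}[\varepsilon]/(\varepsilon^{2})\times\mathbb{P}^{3}$ restricting to $0\to B\overset{i}{\to}E\overset{q}{\to}A\to0$ on the closed fiber, and $\kappa:=\mathrm{KS}(v)\in\mathrm{Ext}^{1}(E,E)$ is the deformation class of $\mathcal{E}_{\varepsilon}$. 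The essential point is that the morphism of families $\mathcal{E}_{\varepsilon}\to\mathcal{F}_{\varepsilon}$ yields, at the level of first-order classes, the identity $q\circ\kappa=\kappa_{A}\circ q$ in $\mathrm{Ext}^{1}(E,A)$, where $\kappa_{A}\in\mathrm{Ext}^{1}(A,A)$ is the class of $\mathcal{F}_{\varepsilon}$; that is, the image of $\mathrm{KS}(v)$ in $\mathrm{Ext}^{1}(E,A)$ lies in the subspace $\mathrm{Ext}^{1}(A,A)\hookrightarrow\mathrm{Ext}^{1}(E,A)$. Since this subspace is precisely the kernel of $\mathrm{Ext}^{1}(E,A)\to\mathrm{Ext}^{1}(B,A)$, we obtain $\theta_{E}(\mathrm{KS}(v))=0$, i.e. $\mathrm{KS}(v)\in K_{E}$. (Equivalently, one may use the sub-family $\mathcal{G}_{\varepsilon}$ and the dual relation $\kappa\circ i=i\circ\kappa_{B}$ together with the second description of $\theta_{E}$.)

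The commutative diagram should then be the natural normal-sequence diagram
$$\begin{CD}
0 @>>> T_{H,E} @>>> T_{\mathbf{M}_{1},E} @>>> N_{H/\mathbf{M}_{1},E} @>>> 0 \\
@. @VV{\wr}V @VV{\wr}V @VVV @. \\
0 @>>> K_{E} @>>> \mathrm{Ext}^{1}(E,E) @>{\theta_{E}}>> \mathrm{im}\,\theta_{E} @>>> 0,
\end{CD}$$
in which the two nontrivial middle vertical maps are $\mathrm{KS}$ and its restriction, the left square commutes by the inclusion just established, and the induced map on cokernels identifies the normal space $N_{H/\mathbf{M}_{1},E}$ with $\mathrm{im}\,\theta_{E}\subseteq\mathrm{Ext}^{1}(B,A)$. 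I expect the main obstacle to be the compatibility $q\circ\kappa=\kappa_{A}\circ q$: it requires matching the abstract Yoneda maps appearing in Lemma \ref{zheteng} with the geometric operations on the families $\mathcal{E}_{\varepsilon},\mathcal{F}_{\varepsilon},\mathcal{G}_{\varepsilon}$, that is, identifying the Kodaira--Spencer class with the extension class of the family and checking that a morphism of flat families induces the corresponding pre- or post-composition on deformation classes, all with consistent signs and base-change identifications.
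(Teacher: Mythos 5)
Your proposal is correct and follows essentially the same route as the paper: both establish $\dim K_{E}=5=\dim T_{H,E}$ from the diagram of Lemma \ref{zheteng} (kernel of $\theta_E$ built from $\mathrm{Ext}^{1}(E,B)$ and $\mathrm{Ext}^{1}(A,A)$), and both prove the inclusion $\mathrm{KS}(T_{H,E})\subseteq K_{E}$ by representing a tangent vector of $H$ as a commutative diagram of triangles coming from the exact sequence of flat families, so that $\theta_{E}(\mathrm{KS}(v))$ vanishes. The compatibility $q\circ\kappa=\kappa_{A}\circ q$ you flag as the main obstacle is exactly the commutativity the paper extracts from Proposition \ref{zhongyao} via the identification of morphisms of flat families with derived-category diagrams in \cite{Ina02} and \cite{Lie06}.
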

\begin{center}
$\begin{CD}
0 @>>> T_{H,E} @>>> T_{\mathbf{M}_{1},E} @>>> N_{H/\mathbf{M}_{1},E} @>>> 0\\
  @.   @VV\mathrm{KS}V         @VV\mathrm{KS}V @VVV\\
0 @>>> K_{E} @>>> \mathrm{Ext}^{1}(E,E) @>\theta_{E}>> \mathrm{Ext}^{1}(B,A)
\end{CD}$
\end{center}
\begin{proof}  
$\theta_{E}$ is the composition of $\mathrm{Ext}^{1}(E,E)\longrightarrow\mathrm{Ext}^{1}(E,A)\longrightarrow\mathrm{Ext}^{1}(B,A)$, where the first map is surjective with a two-dimensional kernel $\mathrm{Ext}^{1}(E,B)$ and the second map has a $3$-dimensional kernel $\mathrm{Ext}^{1}(A,A)$ by Lemma \ref{zheteng}. This implies $K_{E}$ is $5$-dimensional since $K_{E}$ is an extension of $\mathrm{Ext}^{1}(A,A)$ by $\mathrm{Ext}^{1}(E,B)$, so $\mathrm{dim}K_{E}=\mathrm{dim}T_{H,E}$. On the other hand, as shown in the proof of Proposition \ref{zhongyao}, a vector $v$ in $T_{H,E}$ is represented by a commutative diagram:
\begin{center}
$\begin{CD}
 B @>>> E @>>> A \\
 @VVV @V\mathrm{KS}(v) VV @VVV  \\
B[1] @>>> E[1] @>>> A[1]
\end{CD}$.
\end{center}
$\theta_{E}(\mathrm{KS}(v))$ is equal to the composition $B\longrightarrow E\overset{\mathrm{KS}(v)}{\longrightarrow} E[1]\longrightarrow A[1]$, which is zero since by using the commutativity of the diagram. Hence $T_{H,E}$ is mapped into $K_{E}$ under $\mathrm{KS}$. Since we have proved $\mathrm{dim}K_{E}=\mathrm{dim}T_{H,E}$ , $\mathrm{KS}$ canonically induces an isomorphism between them.
\end{proof}

We can also define $\theta_{F}:\mathrm{Ext}^{1}(F,F)\longrightarrow\mathrm{Ext}^{1}(A,B)$ for any nontrivial extension $0\longrightarrow A\longrightarrow F\longrightarrow B\longrightarrow0$ in a similar way. Denote its kernel by $K_{F}$, then we have :
\begin{corol}
The tangent space $T_{\mathbf{P},F}$ is canonically identified with $K_{F}$ under the Kodaira-Spencer map.
\label{ciyao}
\end{corol}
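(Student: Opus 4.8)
The plan is to follow the proof of Proposition \ref{KS} with the roles of the sub- and quotient-objects interchanged, replacing the extension $0\to B\to E\to A\to 0$ by $0\to A\to F\to B\to 0$ and $\mathrm{Ext}^1(B,A)$ by $\mathrm{Ext}^1(A,B)$. Every $F$ in the image of $\varphi_F$ is $\lambda_2$-stable, so by \cite{Ina02} and \cite{Lie06} the tangent space of the moduli functor at $F$ is $\mathrm{Ext}^1(F,F)$ and the Kodaira-Spencer map is an isomorphism $\mathrm{KS}:T_{\mathbf{M}_2,F}\longrightarrow\mathrm{Ext}^1(F,F)$. Precomposing with the tangent map $T_{\varphi_F}$, which is injective by Proposition \ref{zhongyao} (2), realizes $T_{\mathbf{P},F}$ as a $15$-dimensional subspace of $\mathrm{Ext}^1(F,F)$, since $\mathbf{P}$ is a $\mathbb{P}^9$-bundle over the smooth variety $\mathbb{P}^3\times(\mathbb{P}^3)^*$ by Proposition \ref{juru}. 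The whole content of the corollary is that this subspace is exactly $K_F$.

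First I would check that $\mathrm{KS}(T_{\mathbf{P},F})\subseteq K_F$. Exactly as in Proposition \ref{zhongyao} (2), a tangent vector $v\in T_{\mathbf{P},F}$ is represented by a map $\mathrm{Spec}\,\mathbb{C}[\varepsilon]/(\varepsilon^2)\longrightarrow\mathbf{P}$, and pulling back the universal extension $\mathcal{U}_F$ from Proposition \ref{juru} gives a short exact sequence of flat families over $\mathbb{C}[\varepsilon]/(\varepsilon^2)$ restricting to $0\to A\to F\to B\to 0$ on the closed fiber. This produces the commutative diagram
\begin{center}
$\begin{CD}
A @>>> F @>>> B \\
@VVV @V\mathrm{KS}(v)VV @VVV \\
A[1] @>>> F[1] @>>> B[1]
\end{CD}$
\end{center}
whose commutativity forces the composite $A\to F\xrightarrow{\mathrm{KS}(v)} F[1]\to B[1]$, which is precisely $\theta_F(\mathrm{KS}(v))$, to vanish (it factors through $F\to B\to B[1]$ precomposed with $A\to F$, and $A\to F\to B$ is zero). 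Hence $\mathrm{KS}(v)\in K_F$, and therefore $\dim K_F\geq 15$.

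It then suffices to prove the reverse bound $\dim K_F\leq 15$, and for this I would factor $\theta_F$ as
\begin{equation*}
\mathrm{Ext}^1(F,F)\xrightarrow{\ \alpha\ }\mathrm{Ext}^1(F,B)\xrightarrow{\ \beta\ }\mathrm{Ext}^1(A,B),
\end{equation*}
where $\alpha$ restricts the second argument to the quotient $B$ and $\beta$ restricts the first argument to the sub $A$. Applying $\mathrm{Hom}(-,B)$ to the defining sequence and using $\mathrm{Hom}(A,B)=0$ from Lemma \ref{diao} shows $\ker\beta$ is a quotient of $\mathrm{Ext}^1(B,B)=\mathbb{C}^3$, so $\dim\ker\beta\leq 3$; applying $\mathrm{Hom}(F,-)$ shows $\ker\alpha$ is a quotient of $\mathrm{Ext}^1(F,A)$. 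A short auxiliary computation with $\mathrm{Hom}(-,A)$ — in which the connecting map $\mathrm{Hom}(A,A)\to\mathrm{Ext}^1(B,A)$ is nonzero because $F$ is a nontrivial extension, killing one dimension of $\mathrm{Ext}^1(B,A)=\mathbb{C}^{10}$ — gives $\dim\mathrm{Ext}^1(F,A)\leq 9+3=12$, hence $\dim\ker\alpha\leq 12$. Since $K_F=\alpha^{-1}(\ker\beta)$, I obtain $\dim K_F\leq\dim\ker\alpha+\dim\ker\beta\leq 12+3=15$. Combined with the previous bound this yields $\dim K_F=15=\dim T_{\mathbf{P},F}$, so the injection $\mathrm{KS}|_{T_{\mathbf{P},F}}$ is an isomorphism onto $K_F$.

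The main obstacle is the bookkeeping in the two long exact sequences, that is, assembling the analogue of the diagram in Lemma \ref{zheteng} for $0\to A\to F\to B\to 0$ and tracking the connecting and Yoneda maps so that the kernel dimensions come out to $12$ and $3$ \emph{uniformly} in $(p,V)$. The one genuinely nontrivial input is that the nonvanishing of $\mathrm{Hom}(A,A)\to\mathrm{Ext}^1(B,A)$ cuts $\mathrm{Ext}^1(F,A)$ down to dimension $12$; note that when $p\in V$ the target $\mathrm{Ext}^1(A,B)$ jumps from $0$ to $\mathbb{C}$ by Lemma \ref{diao}, but this is automatically absorbed because the bound $\dim K_F\leq 15$ never used the exact rank of $\beta$. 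I should also confirm that the functorial identification $T_{\mathbf{M}_2,F}\cong\mathrm{Ext}^1(F,F)$ is applied correctly at points of the singular locus of $\mathbf{M}_2$, where $\mathrm{Ext}^1(F,F)$ itself jumps to dimension $16$.
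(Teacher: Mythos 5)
Your proposal is correct, but the half that carries the real content — surjectivity of $\mathrm{KS}|_{T_{\mathbf{P},F}}$ onto $K_{F}$ — is proved by a genuinely different route than the paper's. The paper argues constructively: given $\zeta\in K_{F}$, the vanishing of $A\to F\xrightarrow{\zeta}F[1]\to B[1]$ lets one complete $\zeta$ to a morphism of triangles by the axioms of the derived category, read the resulting diagram as a short exact sequence of flat families over $\mathrm{Spec}\,\mathbb{C}[\varepsilon]/(\varepsilon^{2})$, and then invoke the universal property of $\mathbf{P}$ from Proposition \ref{juru} to produce a tangent vector $v$ with $\mathrm{KS}(v)=\zeta$. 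You instead transplant the dimension-count strategy that the paper uses for Proposition \ref{KS}: bounding $\dim K_{F}\leqslant\dim\ker\alpha+\dim\ker\beta\leqslant 12+3=15$ via the long exact sequences and Lemma \ref{diao}, then matching against $\dim T_{\mathbf{P},F}=15$. Your count checks out (the connecting map $\mathrm{Hom}(A,A)\to\mathrm{Ext}^{1}(B,A)$ is indeed injective because the class $f$ of the nontrivial extension is its image of $\mathrm{id}_{A}$, giving $\dim\mathrm{Ext}^{1}(F,A)=12$; and the bound on $\ker\beta$ is insensitive to whether $\mathrm{Ext}^{1}(A,B)$ is $0$ or $\mathbb{C}$), and it is a nontrivial feature of your argument that you bound $\dim K_{F}$ without ever computing $\dim\mathrm{Ext}^{1}(F,F)$, which the paper only determines later (Propositions \ref{cao}--4.11, where it is $15$ or $16$ depending on the stratum) and which in fact uses this corollary as input — so there is no circularity. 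What the paper's construction buys is an explicit inverse map and, as a byproduct, the triangle-completion description of elements of $K_{F}$ that is reused in Lemmas \ref{kao} and \ref{ri}; what your count buys is avoiding the verification, glossed in the paper as ``it is not hard to check $\mathrm{KS}(v)=\zeta$,'' that the completed diagram really classifies the given class. Your closing worry about the singular locus is harmless: $T_{\mathbf{M}_2,F}\cong\mathrm{Ext}^{1}(F,F)$ is the tangent space of the moduli functor and needs no smoothness, and in any case the Kodaira--Spencer map $T_{\mathbf{P},F}\to\mathrm{Ext}^{1}(F,F)$ can be defined directly from the family $\mathcal{U}_{F}$ on $\mathbf{P}\times\mathbb{P}^{3}$ without passing through $\mathbf{M}_{2}$ at all.
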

\begin{proof}
The reason that $T_{\mathbf{P},F}$ is mapped into $K_{F}$ under the Kodaira-Spencer map is the same as in the case of Proposition \ref{KS}. Conversely, take any $\zeta\in K_{F}$, we have that the composition $A\longrightarrow F\overset{\zeta}{\longrightarrow} F[1]\longrightarrow B[1]$ is $0$. By using the universal property of a triangle in the derived category, there exists morphisms $A\longrightarrow A[1]$ and $B\longrightarrow B[1]$ such that the following diagram is commutative:
\begin{center}
$\begin{CD}
 A @>>> F @>>> B \\
 @VVV @V\zeta VV @VVV  \\
A[1] @>>> F[1] @>>> B[1]
\end{CD}$.
\end{center}
This diagram will correspond to an exact sequence of flat families on $\mathrm{Spec}\mathbb{C}[\varepsilon]/(\varepsilon^{2})\times\mathbb{P}^{3}$
\begin{equation*}
0\longrightarrow\mathcal{F}_{\varepsilon}\longrightarrow\mathcal{F}_{\varepsilon}'\longrightarrow\mathcal{G}_{\varepsilon}\longrightarrow0
\end{equation*}
where $\mathcal{F}_{\varepsilon}$, $\mathcal{F}_{\varepsilon}'$ and $\mathcal{G}_{\varepsilon}$ will restrict to $A$, $F$ and $B$ on the closed fiber. By the universal property of $\mathbf{P}$ proved in Proposition \ref{juru}, this sequence induces a morphism from $\mathrm{Spec}\mathbb{C}[\varepsilon]/(\varepsilon^{2})$ to $\mathbf{P}$ corresponding to a tangent vector $v$ of $\mathbf{P}$ at $F$. It is not hard to check $\mathrm{KS}(v)=\zeta$, so $\mathrm{KS}$ is also surjective between $T_{\mathbf{P},F}$ and $K_{F}$.
\end{proof}

We can use the exact sequence (1) to write down the following globalization of the diagram in Proposition \ref{KS}.
\begin{propo}
The following diagram has exact rows. Among the three vertical morphisms, the left one and middle one are isomorphisms, and the right one is an injection.
\begin{displaymath}
\xymatrix{0 \ar[r] & \mathcal{T}_{H} \ar[r] \ar[d] &\mathcal{T}_{\mathbf{M}_{1}}|_{H} \ar[r] \ar[d]_{\mathrm{KS}} &\mathcal{N}_{H/\mathbf{M}_{1}} \ar[r] \ar[d] &0\\
0 \ar[r] & \mathcal{K}_{E} \ar[r] & \mathscr{E}xt^{1}_{\pi_{H}}(\mathcal{U}_{E},\mathcal{U}_{E}) \ar[r] &\mathscr{E}xt^{1}_{\pi_{H}}(\mathcal{G}_{H}\otimes \pi_{H}^{*}\mathcal{L},\mathcal{F}_{H})}
\end{displaymath}
\label{duodiao}
\end{propo}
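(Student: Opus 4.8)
The plan is to realize both rows and the two asserted vertical isomorphisms by applying relative $\mathscr{E}xt$-functors to the universal extension $(1)$, and then to obtain the third vertical map and the exactness claims by a short diagram chase, thereby upgrading the pointwise Proposition \ref{KS} to a statement about sheaves on $H$. First I would construct the bottom row. Applying $\mathscr{E}xt^{\bullet}_{\pi_{H}}(\mathcal{U}_{E},-)$ and $\mathscr{E}xt^{\bullet}_{\pi_{H}}(-,\mathcal{F}_{H})$ to
\[
0\longrightarrow\mathcal{G}_{H}\otimes\pi_{H}^{*}\mathcal{L}\longrightarrow\mathcal{U}_{E}\longrightarrow\mathcal{F}_{H}\longrightarrow0
\]
of Proposition \ref{dadiao} produces the global analogue of the diagram in Lemma \ref{zheteng}; in particular it produces the composite
\[
\theta:\mathscr{E}xt^{1}_{\pi_{H}}(\mathcal{U}_{E},\mathcal{U}_{E})\longrightarrow\mathscr{E}xt^{1}_{\pi_{H}}(\mathcal{U}_{E},\mathcal{F}_{H})\longrightarrow\mathscr{E}xt^{1}_{\pi_{H}}(\mathcal{G}_{H}\otimes\pi_{H}^{*}\mathcal{L},\mathcal{F}_{H}),
\]
and I set $\mathcal{K}_{E}:=\ker\theta$, so the bottom row is left-exact by construction. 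Since all pointwise dimensions in Lemma \ref{zheteng} are constant over $H$, the same cohomology-and-base-change argument used in Proposition \ref{dadiao} shows each of these relative $\mathscr{E}xt$-sheaves is locally free and commutes with base change, restricting over a point $(p,V)$ to the corresponding $\mathrm{Ext}$-group; in particular $\theta$ restricts to $\theta_{E}$ and has constant rank, so $\mathcal{K}_{E}$ is locally free of rank $5$. For the middle vertical map I would use that $\mathbf{M}_{1}$ is a fine moduli space: $(\varphi_{E}\times\mathrm{id})^{*}\mathcal{U}$ agrees with $\mathcal{U}_{E}$ up to a twist by a line bundle pulled back from $H$, which cancels in $\mathscr{E}xt$; hence $\mathscr{E}xt^{1}_{\pi_{H}}(\mathcal{U}_{E},\mathcal{U}_{E})$ is the pullback along $\varphi_{E}$ of the relative $\mathscr{E}xt^{1}$-sheaf of $\mathcal{U}$ on $\mathbf{M}_{1}$, and the global Kodaira--Spencer isomorphism for $\mathbf{M}_{1}$ (via \cite{Ina02,Lie06}) identifies the latter with $\varphi_{E}^{*}\mathcal{T}_{\mathbf{M}_{1}}=\mathcal{T}_{\mathbf{M}_{1}}|_{H}$.

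Next I would produce the right-hand square. The composite $\theta\circ\mathrm{KS}:\mathcal{T}_{\mathbf{M}_{1}}|_{H}\to\mathscr{E}xt^{1}_{\pi_{H}}(\mathcal{G}_{H}\otimes\pi_{H}^{*}\mathcal{L},\mathcal{F}_{H})$ vanishes on the subsheaf $\mathcal{T}_{H}$: by Proposition \ref{KS} it is zero on every fiber $T_{H,E}$, and since the target is locally free of rank $10$ (its fibers are $\mathrm{Ext}^{1}(B,A)=\mathbb{C}^{10}$ by Lemma \ref{diao}) and $H$ is reduced, a map that is fiberwise zero at every point is identically zero. Consequently $\theta\circ\mathrm{KS}$ factors through $\mathcal{N}_{H/\mathbf{M}_{1}}=\mathrm{coker}(\mathcal{T}_{H}\hookrightarrow\mathcal{T}_{\mathbf{M}_{1}}|_{H})$, which defines the right vertical map and makes the right square commute; the same vanishing gives the induced left vertical map $\mathcal{T}_{H}\to\mathcal{K}_{E}$ and commutativity of the left square.

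It then remains to verify the three claims about the verticals. The middle is an isomorphism by construction. For the left map $\mathcal{T}_{H}\to\mathcal{K}_{E}$: both sheaves are locally free of rank $5$, and by Proposition \ref{KS} the map is an isomorphism on every fiber, hence an isomorphism of sheaves. For the right map: since the left and middle verticals are isomorphisms, the induced map on cokernels identifies $\mathcal{N}_{H/\mathbf{M}_{1}}=\mathrm{coker}(\mathcal{T}_{H}\to\mathcal{T}_{\mathbf{M}_{1}}|_{H})$ with $\mathrm{coker}(\mathcal{K}_{E}\to\mathscr{E}xt^{1}_{\pi_{H}}(\mathcal{U}_{E},\mathcal{U}_{E}))=\mathrm{im}(\theta)$, which sits inside $\mathscr{E}xt^{1}_{\pi_{H}}(\mathcal{G}_{H}\otimes\pi_{H}^{*}\mathcal{L},\mathcal{F}_{H})$; thus the right vertical is the inclusion of $\mathrm{im}(\theta)$ and is injective.

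I expect the main obstacle to be the base-change bookkeeping rather than the final diagram chase: one must verify carefully that every relative $\mathscr{E}xt$-sheaf involved is locally free and base-change compatible, so that the fiberwise conclusions of Proposition \ref{KS} genuinely control the maps of sheaves, and one must pin down the global Kodaira--Spencer identification together with the line-bundle twist relating $\mathcal{U}_{E}$ and $(\varphi_{E}\times\mathrm{id})^{*}\mathcal{U}$. Once these foundational points are in place, the remaining arguments (fiberwise-isomorphism implies isomorphism, and passage to cokernels) are formal.
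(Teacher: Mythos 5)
Your argument is correct and is exactly the globalization the paper has in mind: the paper states Proposition \ref{duodiao} with no proof at all, merely remarking that it is obtained by globalizing the diagram of Proposition \ref{KS} via the universal extension $(1)$, and your relative-$\mathscr{E}xt$ construction with cohomology-and-base-change supplies precisely the details the paper leaves to the reader. The points you flag as needing care are the right ones --- constancy of the fiber dimensions from Lemma \ref{zheteng} (giving local freeness and base-change compatibility as in the proof of Proposition \ref{dadiao}), and the line-bundle twist relating $\mathcal{U}_{E}$ to $\mathcal{U}_{1}|_{H\times\mathbb{P}^{3}}$, which the paper itself records only later in the elementary-modification subsection.
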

From this proposition we see that the normal bundle $\mathcal{N}_{H/\mathbf{M}_{1}}$ embedds into $\mathscr{E}xt^{1}_{\pi_{H}}(\mathcal{G}_{H}\otimes\pi_{H}^{*}\mathcal{L},\mathcal{F}_{H})$, hence its projectivization $\mathbb{P}(\mathcal{N}_{H/\mathbf{M}_{1}}^{*})$ is embedded in $\mathbb{P}(\mathscr{E}xt^{1}_{\pi_{H}}(\mathcal{G}_{H}\otimes\pi_{H}^{*}\mathcal{L},\mathcal{F}_{H})^{*})=\mathbb{P}(\mathscr{E}xt^{1}_{\pi_{H}}(\mathcal{G}_{H},\mathcal{F}_{H})^{*})$, where the latter is the preimage of $H$ under the projection $\mathbb{P}(\mathscr{E}xt^{1}_{\pi}(\mathcal{G},\mathcal{F})^{*})=\mathbf{P}\longrightarrow\mathbb{P}^3\times(\mathbb{P}^{3})^{*}$.

Next we are going to compute the dimension of the Zariski tangent space $T_{\mathbf{M}_{2},F}\cong\mathrm{Ext}^{1}(F,F)$ for a nontrivial extension $0\longrightarrow A\longrightarrow F\longrightarrow B\longrightarrow0$. First let us introduce some notations: we denote $e:A\longrightarrow B[1]$ the nontrivial extension of $A$ by $B$ and name the arrows $ B\overset{h}{\longrightarrow}E\overset{j}{\longrightarrow}A$. Similarly let $f:B\longrightarrow A[1]$ be the extension we fix and name the arrows $ A\overset{k}{\longrightarrow}F\overset{l}{\longrightarrow}B$. There are three cases and they are taken care of by the following three propositions.

\begin{propo}
If $F\in\mathbb{P}(\mathcal{N}_{H/\mathbf{M}_{1}}^{*})$, then we have the following commutative diagram with exact rows and columns. All boundary homomorphisms are $0$ except at $\mathrm{Ext}^{1}(B,A)$, where the two homomophisms $\mathrm{Ext}^{1}(F,A)\longleftarrow\mathrm{Ext}^{1}(B,A)\longrightarrow\mathrm{Ext}^{1}(B,F)$ have a same $1$-dimensional kernel $\mathbb{C}f$.
\label{cao}
\end{propo}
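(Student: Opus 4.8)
The plan is to read the whole array off the two long exact sequences attached to the triangle $A \xrightarrow{k} F \xrightarrow{l} B \xrightarrow{f} A[1]$. Applying $\mathrm{Hom}(X,-)$ to the short exact sequence $0\to A\to F\to B\to 0$ for $X\in\{B,F,A\}$ produces the covariant sequences that I take as the rows, and applying $\mathrm{Hom}(-,Y)$ to the same short exact sequence for $Y\in\{A,F,B\}$ produces the contravariant sequences that I take as the columns. With this description, commutativity of the diagram and exactness of each row and column are nothing but naturality and exactness of long exact sequences, so the whole statement reduces to computing the entries and deciding which boundary homomorphisms survive. Since $F\in\mathbb{P}(\mathcal{N}_{H/\mathbf{M}_{1}}^{*})$, the pair $(p,V)$ lies on the incidence locus $H$, that is $p\in V$; I may therefore use all the values of Lemma \ref{diao}, and crucially $\mathrm{Ext}^{1}(A,B)=\mathbb{C}$.

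First I would put in the entries that involve only $A$ and $B$ from Lemma \ref{diao}, and recover the entries involving $F$ by running each four-term sequence, feeding in the vanishings $\mathrm{Hom}(A,B)=\mathrm{Hom}(B,A)=0$, $\mathrm{Ext}^{2}(B,A)=\mathrm{Ext}^{2}(B,B)=0$ and $\mathrm{Ext}^{3}(-,-)=0$, together with $\mathrm{Hom}(A,A)=\mathrm{Hom}(B,B)=\mathrm{Hom}(F,F)=\mathbb{C}$. The conceptual heart of the statement sits at the corner $\mathrm{Ext}^{1}(B,A)$. The two boundary maps feeding into it, $\mathrm{Hom}(A,A)\to\mathrm{Ext}^{1}(B,A)$ and $\mathrm{Hom}(B,B)\to\mathrm{Ext}^{1}(B,A)$, are Yoneda multiplication by the extension class and hence send each identity to $f$; as $f\neq 0$, both have image exactly $\mathbb{C}f$. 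By exactness the two outgoing maps $\mathrm{Ext}^{1}(B,A)\to\mathrm{Ext}^{1}(F,A)$ and $\mathrm{Ext}^{1}(B,A)\to\mathrm{Ext}^{1}(B,F)$ then have the common kernel $\mathbb{C}f$. Concretely, $f$ lies in both kernels because $f\circ l=0$ and $k[1]\circ f=0$, these being consecutive arrows of the defining triangle. This is the clean, formal part.

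It remains to show that every other boundary homomorphism is zero. For the corner terms $\mathrm{Ext}^{2}(B,A)$ and for each connecting map adjacent to a vanishing $\mathrm{Hom}$ or $\mathrm{Ext}^{3}$, this is automatic. The real content is the vanishing of the one surviving interior connecting map $\mathrm{Ext}^{1}(A,B)\to\mathrm{Ext}^{2}(A,A)$, which is Yoneda multiplication $e\mapsto f\cdot e$, where $e$ generates $\mathrm{Ext}^{1}(A,B)$ and gives the object $E$ fixed above; it is represented by the spliced four-term sequence $0\to A\to F\to E\to A\to 0$, and I must prove this class is trivial in $\mathrm{Ext}^{2}(A,A)$. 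I expect this to be the main obstacle: the mirror product $e\cdot f$ lands in $\mathrm{Ext}^{2}(B,B)=0$ for free, but there is no formal reason for $f\cdot e$ to vanish, and I would settle it by an explicit computation exploiting $p\in V$, either through locally free resolutions of $\mathcal{I}_{p}(-1)$ and $\mathcal{O}_{V}(-3)$ or through Serre duality on $\mathbb{P}^{3}$. The analogous vanishing of $\mathrm{Ext}^{1}(F,B)\to\mathrm{Ext}^{2}(F,A)$ then comes for free by naturality, since restriction along $A\to F$ identifies $\mathrm{Ext}^{2}(F,A)$ with $\mathrm{Ext}^{2}(A,A)$ and intertwines the two connecting maps. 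As a check, the resulting splitting gives $\dim\mathrm{Ext}^{1}(F,F)=16$, exactly the tangent dimension forced by a transverse meeting of the $15$-dimensional $\mathbf{P}$ and the $12$-dimensional $\mathbf{B}$ along their $11$-dimensional intersection, consistent with Theorem \ref{zhu1}.
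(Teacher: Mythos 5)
Your reduction of the diagram to the two families of long exact sequences, your treatment of the corner $\mathrm{Ext}^{1}(B,A)$ (both boundary maps send the respective identities to $f$, so both outgoing maps have kernel $\mathbb{C}f$), and your identification of the single non-formal step --- the vanishing of the connecting map $\mathrm{Ext}^{1}(A,B)\to\mathrm{Ext}^{2}(A,A)$, $e\mapsto f[1]\circ e$ --- all match the paper. The gap is in how you propose to prove that vanishing. You extract from the hypothesis $F\in\mathbb{P}(\mathcal{N}_{H/\mathbf{M}_{1}}^{*})$ only the incidence condition $p\in V$, and then plan to show $f[1]\circ e=0$ by an explicit computation with resolutions of $\mathcal{I}_{p}(-1)$ and $\mathcal{O}_{V}(-3)$, or by Serre duality. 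Any such computation depends only on the pair $(A,B)$, so if it worked it would prove the vanishing for \emph{every} nonzero $f\in\mathrm{Ext}^{1}(B,A)$ with $p\in V$ --- and that is false. By Proposition \ref{KS}, membership of $F$ in $\mathbb{P}(\mathcal{N}_{H/\mathbf{M}_{1}}^{*})$ means precisely that the class $f$ lies in the image of $\theta_{E}:\mathrm{Ext}^{1}(E,E)\to\mathrm{Ext}^{1}(B,A)$, a $7$-dimensional subspace of the $10$-dimensional $\mathrm{Ext}^{1}(B,A)$; and the very next proposition in the paper (the case $F\in\mathbb{P}(\mathscr{E}xt^{1}_{\pi_{H}}(\mathcal{G}_{H},\mathcal{F}_{H})^{*})\setminus\mathbb{P}(\mathcal{N}_{H/\mathbf{M}_{1}}^{*})$, where still $p\in V$) forces that connecting map to be \emph{injective}, i.e.\ $f[1]\circ e\neq0$ there. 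So the vanishing genuinely depends on which $f$ you take inside $\mathrm{Ext}^{1}(B,A)$, not just on the incidence of $p$ and $V$, and your proposed method cannot detect this.

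The repair is to use exactly the information you discard: since $f\in\mathrm{im}\,\theta_{E}$, write $f=j[1]\circ\xi\circ h$ for some $\xi\in\mathrm{Ext}^{1}(E,E)$, where $B\overset{h}{\to}E\overset{j}{\to}A\overset{e}{\to}B[1]$ is the triangle defining $E$; then $f[1]\circ e=j[2]\circ\xi[1]\circ\bigl(h[1]\circ e\bigr)=0$ because $h$ and $e$ are consecutive arrows of that triangle. This is how the paper argues (phrased as the equivalence $e\in\mathrm{im}\,\theta_{F}\iff f\in\mathrm{im}\,\theta_{E}$, followed by $\dim\mathrm{Ext}^{1}(F,F)=\dim\ker\theta_{F}+1=16$ via Corollary \ref{ciyao}). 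The remaining pieces of your outline --- the isomorphism $k^{*}:\mathrm{Ext}^{2}(F,A)\cong\mathrm{Ext}^{2}(A,A)$ forcing the second connecting map $\mathrm{Ext}^{1}(F,B)\to\mathrm{Ext}^{2}(F,A)$ to vanish by naturality, and the consistency check $15+12-11=16$ --- are fine once this step is fixed.
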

\begin{center}
$\begin{CD}
\mathrm{Ext}^{1}(B,A)=\mathbb{C}^{10} @>>> \mathrm{Ext}^{1}(F,A)=\mathbb{C}^{12} @>>> \mathrm{Ext}^{1}(A,A)=\mathbb{C}^{3}\\
@VVV @VVV @VVV\\
\mathrm{Ext}^{1}(B,F)=\mathbb{C}^{12} @>>> \mathrm{Ext}^{1}(F,F)=\mathbb{C}^{16} @>>> \mathrm{Ext}^{1}(A,F)=\mathbb{C}^{4}\\
@VVV @VVV @VVV\\
\mathrm{Ext}^{1}(B,B)=\mathbb{C}^{3} @>>> \mathrm{Ext}^{1}(F,B)=\mathbb{C}^{4} @>>> \mathrm{Ext}^{1}(A,B)=\mathbb{C}\\
@VVV @V0VV @V0VV\\
0 @>>> \mathrm{Ext}^{2}(F,A)=\mathbb{C}^{3} @>>> \mathrm{Ext}^{2}(A,A)=\mathbb{C}^{3}\\
@VVV @VVV @VVV\\
0 @>>> \mathrm{Ext}^{2}(F,F)=\mathbb{C}^{4} @>>> \mathrm{Ext}^{2}(A,F)=\mathbb{C}^{4}\\
@VVV @VVV @VVV\\
0 @>>> \mathrm{Ext}^{2}(F,B)=\mathbb{C} @>>> \mathrm{Ext}^{2}(A,B)=\mathbb{C}
\end{CD}$
\end{center}
\begin{proof}
We show that the diagram holds if and only if $F\in\mathbb{P}(\mathcal{N}_{H/\mathbf{M}_{1}}^{*})$. If the diagram holds, then $\theta_{F}\neq0$. We can find $\zeta\in\mathrm{Ext}^{1}(F,F)$ such that $e=l[1]\circ\zeta\circ k$. Now we have $f\circ e[-1]=f\circ l\circ\zeta[-1]\circ k[-1]=0$ because $f\circ l=0$. This means $f:B\longrightarrow A[1]$ factors through $h:B\longrightarrow E$, i.e. $f=x\circ h$ for some $x:E\longrightarrow A[1]$. On the other hand, from the diagram in Lemma \ref{zheteng} we see that $\mathrm{Ext}^{1}(E,E)\overset{j_{*}}{\longrightarrow}\mathrm{Ext}^{1}(E,A)$ is surjective, hence
$x:E\longrightarrow A[1]$ lifts to some $\xi:E\longrightarrow E[1]$. So we have $f=j[1]\circ\xi\circ h$ and $f$ is in the image of $\theta_{E}$. By Proposition \ref{KS}, this means $f$ is in $\mathbb{P}(\mathcal{N}_{H/\mathbf{M}_{1}}^{*})$. Conversely, if $f$ is in $\mathbb{P}(\mathcal{N}_{H/\mathbf{M}_{1}}^{*})$, then we can write $f=j[1]\circ\xi\circ h$ for some nontrivial $\xi:E\longrightarrow E[1]$. Then $f[1]\circ e=j[2]\circ\xi[1]\circ h[1]\circ e=0$ because $h[1]\circ e=0$. This means $e:A\longrightarrow B[1]$ factors through $l[1]:F[1]\longrightarrow B[1]$, i.e. $e=l[1]\circ z$ for some $z:A\longrightarrow F[1]$. On the other hand, $\mathrm{Ext}^{1}(F,F)\overset{k^{*}}{\longrightarrow}\mathrm{Ext}^{1}(A,F)$ is surjective because its cokernel $\mathrm{Ext}^{2}(B,F)=0$. This implies that $z=\zeta\circ k$ for some $\zeta:E\longrightarrow E[1]$. So we have $e=l[1]\circ\zeta\circ k$ and $e$ is in the image of $\theta_{F}$. Therefore $\theta_{F}\neq0$. By Corollary $4.7$, the kernel of $\theta_{F}$ is $T_{\mathbf{P},F}$, which is $15$-dimensional since $\mathbf{P}$ is a $\mathbb{P}^{9}$-bundle over $\mathbb{P}^{3}\times(\mathbb{P}^{3})^{*}$. Hence $\mathrm{Ext}^{1}(F,F)=\mathbb{C}^{16}$. The rest of the diagram will follow automatically due to exactness.
\end{proof}
\begin{propo}
If $F\in \mathbb{P}(\mathscr{E}xt^{1}_{\pi_{H}}(\mathcal{G}_{H},\mathcal{F}_{H})^{*})\setminus\mathbb{P}(\mathcal{N}_{H/\mathbf{M}_{1}}^{*})$, then we have the following commutative diagram with exact rows and columns. All boundary homomorphisms are $0$ except at $\mathrm{Ext}^{1}(B,A)$, where the two homomophisms $\mathrm{Ext}^{1}(F,A)\longleftarrow\mathrm{Ext}^{1}(B,A)\longrightarrow\mathrm{Ext}^{1}(B,F)$ have a same $1$-dimensional kernel $\mathbb{C}f$.
\end{propo}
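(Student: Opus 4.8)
The plan is to follow the proof of Proposition~\ref{cao} almost verbatim, the only genuine change being that now the map $\theta_{F}$ vanishes instead of being surjective. First I would translate the hypothesis into a statement about $\theta_{F}$. Membership of $F$ in $\mathbb{P}(\mathscr{E}xt^{1}_{\pi_{H}}(\mathcal{G}_{H},\mathcal{F}_{H})^{*})$ means that the underlying pair $(p,V)$ lies on the incidence divisor $H$, so $p\in V$ and hence $\mathrm{Ext}^{1}(A,B)=\mathbb{C}$ by Lemma~\ref{diao}; thus $\theta_{F}\colon\mathrm{Ext}^{1}(F,F)\to\mathrm{Ext}^{1}(A,B)$ has a one-dimensional target and is either zero or surjective. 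The derived-category factorization argument in the proof of Proposition~\ref{cao} (the equivalences $e=l[1]\circ\zeta\circ k$ and $f=j[1]\circ\xi\circ h$, combined with Proposition~\ref{KS}) shows, for such $F$, that $\theta_{F}\neq0$ holds if and only if $F\in\mathbb{P}(\mathcal{N}_{H/\mathbf{M}_{1}}^{*})$. Since we are assuming $F\notin\mathbb{P}(\mathcal{N}_{H/\mathbf{M}_{1}}^{*})$, this forces $\theta_{F}=0$.

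The second step is the single dimension count that drives everything. By Corollary~\ref{ciyao} the Kodaira--Spencer map identifies $T_{\mathbf{P},F}$ with $K_{F}=\ker\theta_{F}$, and $\mathbf{P}$ is smooth of dimension $15$, being a $\mathbb{P}^{9}$-bundle over $\mathbb{P}^{3}\times(\mathbb{P}^{3})^{*}$. As $\theta_{F}=0$ we get $\ker\theta_{F}=\mathrm{Ext}^{1}(F,F)$, so $\mathrm{Ext}^{1}(F,F)=\mathbb{C}^{15}$, exactly one dimension smaller than in the situation of Proposition~\ref{cao}, reflecting that $F$ no longer deforms in the extra normal direction.

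With this single input I would then run the long exact sequences obtained by applying $\mathrm{Hom}(-,-)$ to $0\to A\to F\to B\to0$ in each variable and read off the remaining entries using Lemma~\ref{diao}, exactly as in Proposition~\ref{cao}. The nontriviality of the fixed extension class $f\colon B\to A[1]$ makes the connecting maps $\mathrm{Hom}(A,A)\to\mathrm{Ext}^{1}(B,A)$ and $\mathrm{Hom}(B,B)\to\mathrm{Ext}^{1}(B,A)$ both send the identity to $f$, producing the common one-dimensional kernel $\mathbb{C}f$ of $\mathrm{Ext}^{1}(B,A)\to\mathrm{Ext}^{1}(F,A)$ and $\mathrm{Ext}^{1}(B,A)\to\mathrm{Ext}^{1}(B,F)$ asserted in the statement; this part is identical to Proposition~\ref{cao}. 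The substantive difference produced by the method is that the vanishing $\theta_{F}=0$ forces the two connecting maps $\mathrm{Ext}^{1}(A,B)\to\mathrm{Ext}^{2}(A,A)$ and $\mathrm{Ext}^{1}(F,B)\to\mathrm{Ext}^{2}(F,A)$ to be injective of rank one rather than zero; tracking these through yields $\mathrm{Ext}^{1}(A,F)=\mathrm{Ext}^{2}(A,F)=\mathrm{Ext}^{2}(F,F)=\mathbb{C}^{3}$, while the entries not involving $F$ in both slots are unchanged from Proposition~\ref{cao}. As a consistency check the Euler pairing gives $\chi(F,F)=1-15+3-0=-11$, agreeing with the value $1-16+4-0$ obtained in Proposition~\ref{cao}, as it must since $\chi$ depends only on the Chern character.

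The part I expect to require the most care is the first step: reusing the factorization of Proposition~\ref{cao} in the reverse direction to conclude $\theta_{F}=0$ from $F\notin\mathbb{P}(\mathcal{N}_{H/\mathbf{M}_{1}}^{*})$, and then verifying that the resulting nonzero connecting maps at $\mathrm{Ext}^{1}(A,B)$ and $\mathrm{Ext}^{1}(F,B)$ are compatible with the commutativity of the whole double complex, so that all displayed rows and columns remain exact. Once $\mathrm{Ext}^{1}(F,F)=\mathbb{C}^{15}$ is established, everything else is bookkeeping forced by Lemma~\ref{diao} and exactness.
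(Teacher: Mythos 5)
Your argument follows the paper's proof exactly: the paper likewise deduces $\theta_{F}=0$ from the factorization argument in the proof of Proposition \ref{cao}, concludes $\mathrm{Ext}^{1}(F,F)=\mathbb{C}^{15}$ (implicitly via Corollary \ref{ciyao} and the smoothness of the $15$-dimensional $\mathbf{P}$, which you spell out), notes $\mathrm{Ext}^{1}(A,B)=\mathbb{C}$ from Lemma \ref{diao} because $F$ lies over $H$, and lets exactness determine the remaining entries. The only quibble is that the connecting map $\mathrm{Ext}^{1}(F,B)\longrightarrow\mathrm{Ext}^{2}(F,A)$ has rank one but is not injective (its source is $\mathbb{C}^{4}$); this slip in the bookkeeping does not affect the argument.
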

\begin{center}
$\begin{CD}
\mathrm{Ext}^{1}(B,A)=\mathbb{C}^{10} @>>> \mathrm{Ext}^{1}(F,A)=\mathbb{C}^{12} @>>> \mathrm{Ext}^{1}(A,A)=\mathbb{C}^{3}\\
@VVV @VVV @VVV\\
\mathrm{Ext}^{1}(B,F)=\mathbb{C}^{12} @>>> \mathrm{Ext}^{1}(F,F)=\mathbb{C}^{15} @>>> \mathrm{Ext}^{1}(A,F)=\mathbb{C}^{3}\\
@VVV @VVV @V0VV\\
\mathrm{Ext}^{1}(B,B)=\mathbb{C}^{3} @>>> \mathrm{Ext}^{1}(F,B)=\mathbb{C}^{4} @>>> \mathrm{Ext}^{1}(A,B)=\mathbb{C}\\
@VVV @VVV @VVV\\
0 @>>> \mathrm{Ext}^{2}(F,A)=\mathbb{C}^{3} @>>> \mathrm{Ext}^{2}(A,A)=\mathbb{C}^{3}\\
@VVV @VVV @VVV\\
0 @>>> \mathrm{Ext}^{2}(F,F)=\mathbb{C}^{3} @>>> \mathrm{Ext}^{2}(A,F)=\mathbb{C}^{3}\\
@VVV @VVV @VVV\\
0 @>>> \mathrm{Ext}^{2}(F,B)=\mathbb{C} @>>> \mathrm{Ext}^{2}(A,B)=\mathbb{C}\\
\end{CD}$
\end{center}
\begin{proof}
By the proof of previous proposition, we know that $\theta_{F}=0$ since $F$ is not in $\mathbb{P}(\mathcal{N}_{H/\mathbf{M}_{1}}^{*})$. Therefore $\mathrm{Ext}^{1}(F,F)=\mathbb{C}^{15}$. By Lemma \ref{diao}, we know $\mathrm{Ext}^{1}(A,B)=\mathbb{C}$, since $F$ is mapped into $H$ under the bundle projection $\mathbf{P}\longrightarrow\mathbb{P}^{3}\times(\mathbb{P}^{3})^{*}$. The rest of the diagram then follows automatically due to exactness.
\end{proof}
\begin{propo}
If $F\in \mathbf{P}\setminus\mathbb{P}(\mathscr{E}xt^{1}_{\pi_{H}}(\mathcal{G}_{H},\mathcal{F}_{H})^{*})$, then we have the following commutative diagram with exact rows and columns. All boundary homomorphisms are $0$ except at $\mathrm{Ext}^{1}(B,A)$, where the two homomophisms $\mathrm{Ext}^{1}(F,A)\longleftarrow\mathrm{Ext}^{1}(B,A)\longrightarrow\mathrm{Ext}^{1}(B,F)$ have a same $1$-dimensional kernel $\mathbb{C}f$.
\end{propo}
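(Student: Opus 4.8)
My plan is to argue exactly as in the two preceding propositions, the single new ingredient being the position of $F$. Since $F$ lies in $\mathbf{P}\setminus\mathbb{P}(\mathscr{E}xt^{1}_{\pi_{H}}(\mathcal{G}_{H},\mathcal{F}_{H})^{*})$, its image under the bundle projection $\mathbf{P}\longrightarrow\mathbb{P}^{3}\times(\mathbb{P}^{3})^{*}$ is a pair $(p,V)$ with $p\notin V$. The first step is therefore to record the Ext groups between $A$ and $B$ in this regime. By Lemma \ref{diao} we have $\mathrm{Ext}^{1}(A,B)=0$, and the same disjoint-support computation (equivalently, the constancy of $\chi(A,B)$ in the Chern characters, which forces $\chi(A,B)=0$) shows $\mathrm{Ext}^{2}(A,B)=0$ as well; every other group $\mathrm{Ext}^{i}(A,A)$, $\mathrm{Ext}^{i}(B,B)$, $\mathrm{Ext}^{i}(B,A)$ is unchanged from Lemma \ref{diao}, being homogeneous in $(p,V)$. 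This is the only point at which case $(3)$ differs numerically from the previous case.

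The one substantive computation is the dimension of $\mathrm{Ext}^{1}(F,F)$. Because its target $\mathrm{Ext}^{1}(A,B)$ now vanishes, the map $\theta_{F}\colon\mathrm{Ext}^{1}(F,F)\longrightarrow\mathrm{Ext}^{1}(A,B)$ is automatically zero, so $K_{F}=\ker\theta_{F}=\mathrm{Ext}^{1}(F,F)$. By Corollary \ref{ciyao} the Kodaira-Spencer map identifies $K_{F}$ with $T_{\mathbf{P},F}$, and $\mathbf{P}$ is smooth of dimension $15$ (a $\mathbb{P}^{9}$-bundle over the $6$-dimensional $\mathbb{P}^{3}\times(\mathbb{P}^{3})^{*}$); hence $\mathrm{Ext}^{1}(F,F)=\mathbb{C}^{15}$. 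As a sanity check, $\chi(F,F)=\chi(A,A)+\chi(A,B)+\chi(B,A)+\chi(B,B)=1+0+(-10)+(-2)=-11$ agrees with $1-15+3-0$, the same value as in cases $(1)$ and $(2)$.

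With these two inputs I would then build the diagram out of the two families of long exact sequences obtained by applying $\mathrm{RHom}(-,X)$ for $X\in\{A,F,B\}$ and $\mathrm{RHom}(Y,-)$ for $Y\in\{B,F,A\}$ to the defining triangle $A\overset{k}{\longrightarrow}F\overset{l}{\longrightarrow}B\overset{f}{\longrightarrow}A[1]$, precisely as in Lemma \ref{zheteng} and Proposition \ref{cao}. Substituting the known values and chasing exactness pins down every remaining entry; the vanishing of all connecting maps except the two into $\mathrm{Ext}^{1}(B,A)$ then follows automatically, since with $\mathrm{Ext}^{1}(F,F)=\mathbb{C}^{15}$ and $\mathrm{Ext}^{1}(A,B)=0$ the exact rows and columns leave no room for a further nonzero differential. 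Finally, the maps $l^{*}\colon\mathrm{Ext}^{1}(B,A)\longrightarrow\mathrm{Ext}^{1}(F,A)$ and $k_{*}\colon\mathrm{Ext}^{1}(B,A)\longrightarrow\mathrm{Ext}^{1}(B,F)$ have kernel equal to the image of the connecting map out of $\mathrm{Hom}(A,A)$, respectively $\mathrm{Hom}(B,B)$; each such map sends the identity to the extension class $f$, and since the extension is nontrivial this image is exactly the line $\mathbb{C}f$, giving the common $1$-dimensional kernel.

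The computation presents no conceptual obstacle; the only thing requiring care is the refinement $\mathrm{Ext}^{2}(A,B)=0$ off the incidence divisor, which is what makes the numerics close up to $\chi(F,F)=-11$ rather than $-10$. Everything else is identical to the case already treated, with the entry $\mathrm{Ext}^{1}(A,B)=\mathbb{C}$ of case $(2)$ replaced by $0$, and the proof is correspondingly short.
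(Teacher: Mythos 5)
Your argument is essentially the paper's: the two inputs are $\theta_{F}=0$ (hence $\mathrm{Ext}^{1}(F,F)=K_{F}\cong T_{\mathbf{P},F}=\mathbb{C}^{15}$ by Corollary \ref{ciyao}) and $\mathrm{Ext}^{1}(A,B)=0$ off the incidence divisor, after which the diagram is filled in by exactness; your identification of the common kernel $\mathbb{C}f$ as the image of $\mathrm{id}_{A}$, resp.\ $\mathrm{id}_{B}$, under the two connecting maps into $\mathrm{Ext}^{1}(B,A)$ is also the standard one. Where you depart from the paper is the entry $\mathrm{Ext}^{2}(A,B)$: the printed diagram (following Lemma \ref{diao}, which asserts $\mathrm{Ext}^{2}(A,B)=\mathbb{C}$ with no case distinction) keeps $\mathrm{Ext}^{2}(A,B)=\mathbb{C}$, $\mathrm{Ext}^{2}(F,B)=\mathbb{C}$ and $\mathrm{Ext}^{2}(A,F)=\mathrm{Ext}^{2}(F,F)=\mathbb{C}^{4}$, whereas you claim $\mathrm{Ext}^{2}(A,B)=0$ and hence $\mathrm{Ext}^{2}(F,B)=0$ and $\mathrm{Ext}^{2}(A,F)=\mathrm{Ext}^{2}(F,F)=\mathbb{C}^{3}$. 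On this point you are right and the printed diagram is not: for $p\notin V$ the local Ext sheaves $\mathscr{E}xt^{j}(\mathcal{I}_{p}(-1),\mathcal{O}_{V}(-3))$ vanish for $j>0$ and equal $\mathcal{O}_{V}(-2)$ for $j=0$, so $\mathrm{Ext}^{i}(A,B)=H^{i}(V,\mathcal{O}_{V}(-2))=0$ for all $i$; equivalently, your Euler-characteristic check is decisive, since the printed entries would give $\chi(F,F)=1-15+4-0=-10$, while additivity of $\chi$ in the Chern character and the diagrams of the two preceding cases force $\chi(F,F)=-11$. So your proof establishes a corrected diagram rather than the one displayed, and the correction should propagate back to Lemma \ref{diao}, where $\mathrm{Ext}^{2}(A,B)$ needs the same ``$\mathbb{C}$ if $p\in V$, $0$ otherwise'' caveat as $\mathrm{Ext}^{1}(A,B)$. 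The discrepancy is harmless for the rest of the argument, since Remark \ref{fadian} and everything downstream use only $\mathrm{Ext}^{1}(F,F)=\mathbb{C}^{15}$ from this proposition, but it is a genuine erratum that your write-up correctly identifies.
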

\begin{center}
$\begin{CD}
\mathrm{Ext}^{1}(B,A)=\mathbb{C}^{10} @>>> \mathrm{Ext}^{1}(F,A)=\mathbb{C}^{12} @>>> \mathrm{Ext}^{1}(A,A)=\mathbb{C}^{3}\\
@VVV @VVV @VVV\\
\mathrm{Ext}^{1}(B,F)=\mathbb{C}^{12} @>>> \mathrm{Ext}^{1}(F,F)=\mathbb{C}^{15} @>>> \mathrm{Ext}^{1}(A,F)=\mathbb{C}^{3}\\
@VVV @VVV @VVV\\
\mathrm{Ext}^{1}(B,B)=\mathbb{C}^{3} @>>> \mathrm{Ext}^{1}(F,B)=\mathbb{C}^{3} @>>> \mathrm{Ext}^{1}(A,B)=0\\
@VVV @V0VV @VVV\\
0 @>>> \mathrm{Ext}^{2}(F,A)=\mathbb{C}^{3} @>>> \mathrm{Ext}^{2}(A,A)=\mathbb{C}^{3}\\
@VVV @VVV @VVV\\
0 @>>> \mathrm{Ext}^{2}(F,F)=\mathbb{C}^{4} @>>> \mathrm{Ext}^{2}(A,F)=\mathbb{C}^{4}\\
@VVV @VVV @VVV\\
0 @>>> \mathrm{Ext}^{2}(F,B)=\mathbb{C} @>>> \mathrm{Ext}^{2}(A,B)=\mathbb{C}\\
\end{CD}$
\end{center}
\begin{proof}
Since $F$ is not in $\mathbb{P}(\mathcal{N}_{H/\mathbf{M}_{1}}^{*})$, we have $\theta_{F}=0$ and $\mathrm{Ext}^{1}(F,F)=\mathbb{C}^{15}$. By Lemma \ref{diao}, we know $\mathrm{Ext}^{1}(A,B)=0$ since $F$ is mapped outside $H$ under the bundle projection $\mathbf{P}\longrightarrow\mathbb{P}^{3}\times(\mathbb{P}^{3})^{*}$. The rest of the diagram then follows automatically due to exactness.
\end{proof}
\begin{remark}
From the above propositions, we can see that for $F\in \mathbf{P}\setminus\mathbb{P}(\mathcal{N}_{H/\mathbf{M}_{1}}^{*})$, $\mathbf{P}$ is smooth at $F$ and $\mathrm{dim} T_{\mathbf{P},F}=\mathrm{dim}T_{\mathbf{M}_{2},F}=15$. By Proposition \ref{zhongyao} (2), $T_{\varphi_{F},F}$ is injective. This implies $\varphi_{F}$ is an isomorphism at $F$ and $\mathbf{M}_{2}$ is smooth at $F$.
\label{fadian}
\end{remark}
\smallskip

\noindent\textbf{Elementary modification.} In this subsection, we construct a flat family of $\lambda_{2}$-stable objects on the blow-up of $\mathbf{M}_{1}$ along $H$. The key is to perform a so-called elementary modification on the pullback of universal family of $\lambda_{1}$-stable objects along the exceptional divisor with respect to the extension $(1)$ in Proposition \ref{dadiao}.

Let us first introduce some notations: denote the blow-up of $\mathbf{M}_{1}$ along $H$ by $\mathbf{B}$, the blow-up morphism $\mathbf{B}\times\mathbb{P}^{3}\longrightarrow \mathbf{M}_{1}\times\mathbb{P}^{3}$ by $b$ and its restriction to the exceptional divisor $\mathbb{P}(\mathcal{N}_{H/\mathbf{M}_{1}}^{*})\times\mathbb{P}^{3}\longrightarrow H\times\mathbb{P}^{3}$ by $b_{H}$. Denote the universal family of $\lambda_{1}$-stable objects on $\mathbf{M}_{1}\times\mathbb{P}^{3}$ by $\mathcal{U}_{1}$, then $\mathcal{U}_{1}|_{H\times\mathbb{P}^{3}}$ and $\mathcal{U}_{E}$ both induce the embedding $\varphi_{E}:H\longrightarrow\mathbf{M}_{1}$, so they differ from each other by tensoring a pullback of a line bundle from $H$ via projection. Assume $\mathcal{U}_{1}|_{H\times\mathbb{P}^{3}}=\mathcal{U}_{E}\otimes\pi_{H}^{*}\mathcal{L}'$ for some line bundle $\mathcal{L}'$ on $H$. Consider the composition of the restriction map and the pullback of surjection in (1) by $b_{H}$:
\begin{equation}
b^{*}\mathcal{U}_{1}\twoheadrightarrow b^{*}\mathcal{U}_{1}|_{\mathbb{P}(\mathcal{N}_{H/\mathbf{M}_{1}}^{*})\times\mathbb{P}^{3}}=b_{H}^{*}\mathcal{U}_{E}\otimes b_{H}^{*}\pi_{H}^{*}\mathcal{L}'\twoheadrightarrow b_{H}^{*}\mathcal{F}_{H}\otimes b_{H}^{*}\pi_{H}^{*}\mathcal{L}'\nonumber
\end{equation} 
Denote the kernel of this composition by $\mathcal{K}$ then we have:
\begin{propo}
The sheaf $\mathcal{K}$ is a flat family of $\lambda_{2}$-stable objects. 
\end{propo}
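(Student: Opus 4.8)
The plan is to recognize $\mathcal{K}$ as the elementary modification of the flat family $b^{*}\mathcal{U}_{1}$ along the exceptional divisor, determined by the surjection onto $\mathcal{Q}:=b_{H}^{*}\mathcal{F}_{H}\otimes b_{H}^{*}\pi_{H}^{*}\mathcal{L}'$, and to treat flatness and fibrewise stability separately. Writing $D:=\mathbb{P}(\mathcal{N}_{H/\mathbf{M}_{1}}^{*})$ and using the defining sequence $0\to\mathcal{K}\to b^{*}\mathcal{U}_{1}\to\mathcal{Q}\to 0$, the local flatness criterion reduces the claim to the vanishing of $\mathscr{T}or_{1}^{\mathbf{B}}(\mathcal{K},k(x))$ for $x\in D$. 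Since $b^{*}\mathcal{U}_{1}$ is flat over $\mathbf{B}$, all of its higher relative Tor sheaves vanish, and the long exact Tor sequence gives $\mathscr{T}or_{1}^{\mathbf{B}}(\mathcal{K},k(x))\cong\mathscr{T}or_{2}^{\mathbf{B}}(\mathcal{Q},k(x))$; the right-hand side vanishes because $D\times\mathbb{P}^{3}$ is Cartier in $\mathbf{B}\times\mathbb{P}^{3}$, so $\mathcal{O}_{D}$ has Tor-dimension one over $\mathcal{O}_{\mathbf{B}}$, while $\mathcal{Q}$ is flat over $D$. This is the standard flatness statement for elementary modifications and should be routine.

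Next I would compute the fibres. For $x$ with $b(x)\notin\varphi_{E}(H)$ the quotient $\mathcal{Q}$ vanishes in a neighbourhood of the fibre, so $\mathcal{K}|_{x\times\mathbb{P}^{3}}\cong\mathcal{U}_{1}|_{b(x)\times\mathbb{P}^{3}}$ is a $\lambda_{1}$-stable object lying off the destabilized locus $\varphi_{E}(H)$ of Proposition \ref{zhongyao}(1); being unaffected by the wall-crossing, it remains $\lambda_{2}$-stable. For $x\in D$ over $(p,V)\in H$, restricting the defining sequence to the fibre and using flatness of $b^{*}\mathcal{U}_{1}$ yields the four-term exact sequence
\[
0\longrightarrow\mathscr{T}or_{1}^{\mathbf{B}}(\mathcal{Q},k(x))\longrightarrow\mathcal{K}|_{x\times\mathbb{P}^{3}}\longrightarrow E\longrightarrow A\longrightarrow 0,
\]
where $E=\mathcal{U}_{1}|_{b(x)}$ and $E\to A=\mathcal{Q}|_{x}$ is the fibrewise quotient of the universal extension $(1)$, with kernel $B$. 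Because $\mathcal{Q}$ is $\mathcal{O}_{D}$-flat and $D$ is Cartier, $\mathscr{T}or_{1}^{\mathbf{B}}(\mathcal{Q},k(x))\cong A\otimes N_{D/\mathbf{B},x}^{*}\cong A$; splicing the sequence then produces
\[
0\longrightarrow A\longrightarrow\mathcal{K}|_{x\times\mathbb{P}^{3}}\longrightarrow B\longrightarrow 0,
\]
exhibiting each fibre over $D$ as an extension of $B$ by $A$.

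By the definition of a simple wall-crossing (Definition \ref{lihai}) every \emph{nontrivial} extension of $B$ by $A$ is $\lambda_{2}$-stable, so it remains only to prove that the extension above is nontrivial for every $x\in D$, and this is the main obstacle. I would compute the class of this extension in $\mathrm{Ext}^{1}(B,A)$ and identify it with the normal datum recorded by $x$: by Proposition \ref{KS} the point $x\in D=\mathbb{P}(\mathcal{N}_{H/\mathbf{M}_{1}}^{*})$ over $E$ corresponds to a nonzero class in $\mathrm{im}(\theta_{E})\subset\mathrm{Ext}^{1}(B,A)$, and the globalised embedding $\mathcal{N}_{H/\mathbf{M}_{1}}\hookrightarrow\mathscr{E}xt^{1}_{\pi_{H}}(\mathcal{G}_{H}\otimes\pi_{H}^{*}\mathcal{L},\mathcal{F}_{H})$ of Proposition \ref{duodiao} organises these classes into a nowhere-vanishing relative section. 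The crux is the naturality computation showing that the boundary class produced by the elementary modification agrees, up to an invertible scalar, with this tautological normal class; granting this, the class is nonzero precisely because we have projectivised the normal bundle, so the extension is nontrivial and $\mathcal{K}|_{x\times\mathbb{P}^{3}}$ is $\lambda_{2}$-stable. Assembling the three cases then shows that $\mathcal{K}$ is a flat family of $\lambda_{2}$-stable objects.
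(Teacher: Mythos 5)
Your overall strategy coincides with the paper's: show $\mathcal{K}$ is flat, observe the fibres away from the exceptional divisor $D=\mathbb{P}(\mathcal{N}_{H/\mathbf{M}_{1}}^{*})$ are untouched $\lambda_{1}$-stable objects, show the fibres over $D$ are extensions of $B$ by $A$, and reduce stability to nontriviality of those extensions. Your flatness argument is genuinely different from the paper's and is a legitimate alternative: the paper deduces flatness from the constancy of the Chern character of the fibres over the smooth base $\mathbf{B}$, whereas you use the local criterion together with the change-of-rings bound $\mathrm{Tor\text{-}dim}_{\mathcal{O}_{\mathbf{B}}}(\mathcal{Q})\leqslant\mathrm{Tor\text{-}dim}_{\mathcal{O}_{D}}(\mathcal{Q})+\mathrm{Tor\text{-}dim}_{\mathcal{O}_{\mathbf{B}}}(\mathcal{O}_{D})=1$; this is cleaner and does not depend on smoothness of $\mathbf{B}$. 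Your fibrewise four-term sequence and the identification $\mathscr{T}or_{1}^{\mathbf{B}}(\mathcal{Q},k(x))\cong A\otimes N_{D/\mathbf{B},x}^{*}$ are the pointwise version of the paper's sequence $(3)$, and are correct.

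The gap is the step you yourself flag and then assume: that the extension class of $0\to A\to\mathcal{K}_{x}\to B\to 0$ is, up to scalar, the class $\theta_{E}(\mathrm{KS}(x))\in\mathrm{Ext}^{1}(B,A)$ attached to the normal direction $x$. This is not a formality that can be "granted": a priori the boundary class of the elementary modification could vanish on some fibre even though $x$ is a nonzero normal direction, and the entire stability claim rests on ruling this out. The paper closes this gap by re-deriving $\mathcal{K}|_{D\times\mathbb{P}^{3}}$ as a pullback followed by a pushout: one first pulls back the sequence $0\to b^{*}\mathcal{U}_{1}\otimes\mathcal{I}_{D\times\mathbb{P}^{3}}\to b^{*}\mathcal{U}_{1}\to b_{H}^{*}\mathcal{U}_{E}\otimes b_{H}^{*}\pi_{H}^{*}\mathcal{L}'\to 0$ along the inclusion $b_{H}^{*}\mathcal{G}_{H}\otimes b_{H}^{*}\pi_{H}^{*}\mathcal{L}\otimes b_{H}^{*}\pi_{H}^{*}\mathcal{L}'\hookrightarrow b_{H}^{*}\mathcal{U}_{E}\otimes b_{H}^{*}\pi_{H}^{*}\mathcal{L}'$, then pushes out along $b^{*}\mathcal{U}_{1}\otimes\mathcal{I}_{D\times\mathbb{P}^{3}}\twoheadrightarrow b_{H}^{*}\mathcal{F}_{H}\otimes b_{H}^{*}\pi_{H}^{*}\mathcal{L}'\otimes\mathcal{N}_{D\times\mathbb{P}^{3}}^{*}$. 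On a fibre this says: start from the self-extension $0\to E\to G\to E\to 0$ representing $\mathrm{KS}(x)\in\mathrm{Ext}^{1}(E,E)$, pull back along $h:B\to E$ and push out along $j:E\to A$, which by definition produces $j[1]\circ\mathrm{KS}(x)\circ h=\theta_{E}(\mathrm{KS}(x))$. Since by Proposition \ref{KS} the map $\theta_{E}\circ\mathrm{KS}$ is injective on $N_{H/\mathbf{M}_{1},E}$, the class is nonzero for every $x\in D$ and the fibre is a nontrivial extension, hence $\lambda_{2}$-stable by Definition \ref{lihai}. You would need to supply this (or an equivalent) naturality computation for your proof to be complete.
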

\begin{proof}
$\mathcal{K}$ is a flat family of $\lambda_{2}$-stable objects outside the exceptional divisor because it is identical to $\mathcal{U}_{1}$. If we restrict the exact sequence $0\longrightarrow\mathcal{K}\longrightarrow b^{*}\mathcal{U}_{1}\longrightarrow b_{H}^{*}\mathcal{F}_{H}\otimes b_{H}^{*}\pi_{H}^{*}\mathcal{L}'\longrightarrow0$ to the exceptional divisor $\mathbb{P}(\mathcal{N}_{H/\mathbf{M}_{1}}^{*})\times\mathbb{P}^{3}$, we will get
\begin{align*}
0\longrightarrow\mathscr{T}or^{1}(b_{H}^{*}\mathcal{F}_{H}\otimes b_{H}^{*}\pi_{H}^{*}\mathcal{L}',\mathcal{O}_{\mathbb{P}(\mathcal{N}_{H/\mathbf{M}_{1}}^{*})\times\mathbb{P}^{3}})\longrightarrow\mathcal{K}|_{\mathbb{P}(\mathcal{N}_{H/\mathbf{M}_{1}}^{*})\times\mathbb{P}^{3}}&\longrightarrow\\b_{H}^{*}\mathcal{U}_{E}\otimes b_{H}^{*}\pi_{H}^{*}\mathcal{L}'\longrightarrow b_{H}^{*}\mathcal{F}_{H}\otimes b_{H}^{*}\pi_{H}^{*}\mathcal{L}'&\longrightarrow0\nonumber
\end{align*}
On the other hand, tensoring $b_{H}^{*}\mathcal{F}_{H}\otimes b_{H}^{*}\pi_{H}^{*}\mathcal{L}'$ to the exact sequence $0\longrightarrow\mathcal{I}_{\mathbb{P}(\mathcal{N}_{H/\mathbf{M}_{1}}^{*})\times\mathbb{P}^{3}}\longrightarrow\mathcal{O}\longrightarrow\mathcal{O}_{\mathbb{P}(\mathcal{N}_{H/\mathbf{M}_{1}}^{*})\times\mathbb{P}^{3}}\longrightarrow0$, we have
\begin{align*}
0\longrightarrow\mathscr{T}or^{1}(b_{H}^{*}\mathcal{F}_{H}\otimes b_{H}^{*}\pi_{H}^{*}\mathcal{L}',\mathcal{O}_{\mathbb{P}(\mathcal{N}_{H/\mathbf{M}_{1}}^{*})\times\mathbb{P}^{3}})\overset{=}{\longrightarrow} b_{H}^{*}\mathcal{F}_{H}\otimes b_{H}^{*}\pi_{H}^{*}\mathcal{L}'\otimes\mathcal{I}_{\mathbb{P}(\mathcal{N}_{H/\mathbf{M}_{1}}^{*})\times\mathbb{P}^{3}}\\\overset{0}{\longrightarrow} b_{H}^{*}\mathcal{F}_{H}\otimes b_{H}^{*}\pi_{H}^{*}\mathcal{L}'\overset{=}{\longrightarrow}  b_{H}^{*}\mathcal{F}_{H}\otimes b_{H}^{*}\pi_{H}^{*}\mathcal{L}'\longrightarrow0.\nonumber
\end{align*}
Hence 
\begin{align*}
\mathscr{T}or^{1}(b_{H}^{*}\mathcal{F}_{H}\otimes b_{H}^{*}\pi_{H}^{*}\mathcal{L}',\mathcal{O}_{\mathbb{P}(\mathcal{N}_{H/\mathbf{M}_{1}}^{*})\times\mathbb{P}^{3}})&=b_{H}^{*}\mathcal{F}_{H}\otimes b_{H}^{*}\pi_{H}^{*}\mathcal{L}'\otimes\mathcal{I}_{\mathbb{P}(\mathcal{N}_{H/\mathbf{M}_{1}}^{*})\times\mathbb{P}^{3}}\\&= b_{H}^{*}\mathcal{F}_{H}\otimes b_{H}^{*}\pi_{H}^{*}\mathcal{L}'\otimes\mathcal{N}_{\mathbb{P}(\mathcal{N}_{H/\mathbf{M}_{1}}^{*})\times\mathbb{P}^{3}}^{*}.
\end{align*}
Also notice that the kernel of
\begin{equation*}
b_{H}^{*}\mathcal{U}_{E}\otimes b_{H}^{*}\pi_{H}^{*}\mathcal{L}'\longrightarrow b_{H}^{*}\mathcal{F}_{H}\otimes b_{H}^{*}\pi_{H}^{*}\mathcal{L}'
\end{equation*} is $b_{H}^{*}\mathcal{G}_{H}\otimes b_{H}^{*}\pi_{H}^{*}\mathcal{L}\otimes b_{H}^{*}\pi_{H}^{*}\mathcal{L}'$, so $\mathcal{K}|_{\mathbb{P}(\mathcal{N}_{H/\mathbf{M}_{1}}^{*})\times\mathbb{P}^{3}}$ satisfies
\begin{align}
0\longrightarrow  b_{H}^{*}\mathcal{F}_{H}\otimes b_{H}^{*}\pi_{H}^{*}\mathcal{L}'\otimes\mathcal{N}_{\mathbb{P}(\mathcal{N}_{H/\mathbf{M}_{1}}^{*})\times\mathbb{P}^{3}}^{*}\longrightarrow\mathcal{K}|_{\mathbb{P}(\mathcal{N}_{H/\mathbf{M}_{1}}^{*})\times\mathbb{P}^{3}}&\longrightarrow\nonumber\\ b_{H}^{*}\mathcal{G}_{H}\otimes b_{H}^{*}\pi_{H}^{*}\mathcal{L}\otimes b_{H}^{*}\pi_{H}^{*}\mathcal{L}'&\longrightarrow0.
\end{align}
This means on each fiber $x\times\mathbb{P}^{3}$, the restriction $\mathcal{K}_{x}$ is an extension of $B$ by $A$. In particular $\mathcal{K}_{x}$ has the same Chern character as other fibers, therefore $\mathcal{K}$ is flat since $\mathbf{B}$ is smooth. To prove it is a family of $\lambda_{2}$-stable objects, we need to show $\mathcal{K}_{x}$ is a nontrivial extension of $B$ by $A$. Actually since $x\in\mathbb{P}(\mathcal{N}_{H/\mathbf{M}_{1}}^{*})$ represents a nonzero normal direction of $H$ in $\mathbf{M}_{1}$, we expect $\mathcal{K}_{x}$ to be $\theta_{E}(\mathrm{KS}(x))$ in $\mathrm{Ext}^{1}(B,A)$. This is indeed the case because  $\mathcal{K}|_{\mathbb{P}(\mathcal{N}_{H/\mathbf{M}_{1}}^{*})\times\mathbb{P}^{3}}$ can be interpreted in the following way: First we use the injection
\begin{equation*}
b_{H}^{*}\mathcal{G}_{H}\otimes b_{H}^{*}\pi_{H}^{*}\mathcal{L}\otimes b_{H}^{*}\pi_{H}^{*}\mathcal{L}'\longrightarrow b_{H}^{*}\mathcal{U}_{E}\otimes b_{H}^{*}\pi_{H}^{*}\mathcal{L}'
\end{equation*}
to pull back the exact sequence
\begin{equation*}
0\longrightarrow b^{*}\mathcal{U}_{1}\otimes\mathcal{I}_{\mathbb{P}(\mathcal{N}_{H/\mathbf{M}_{1}}^{*})\times\mathbb{P}^{3}}\longrightarrow b^{*}\mathcal{U}_{1}\longrightarrow b_{H}^{*}\mathcal{U}_{E}\otimes b_{H}^{*}\pi_{H}^{*}\mathcal{L}'\longrightarrow0,
\end{equation*}we get
\begin{equation*}
0\longrightarrow b^{*}\mathcal{U}_{1}\otimes\mathcal{I}_{\mathbb{P}(\mathcal{N}_{H/\mathbf{M}_{1}}^{*})\times\mathbb{P}^{3}}\longrightarrow\mathcal{K}\longrightarrow b_{H}^{*}\mathcal{G}_{H}\otimes b_{H}^{*}\pi_{H}^{*}\mathcal{L}\otimes b_{H}^{*}\pi_{H}^{*}\mathcal{L}'\longrightarrow0.
\end{equation*}Then we push out the resulting exact sequence using the surjection 
\begin{equation*}
b^{*}\mathcal{U}_{1}\otimes\mathcal{I}_{\mathbb{P}(\mathcal{N}_{H/\mathbf{M}_{1}}^{*})\times\mathbb{P}^{3}}\longrightarrow b_{H}^{*}\mathcal{F}_{H}\otimes b_{H}^{*}\pi_{H}^{*}\mathcal{L}'\otimes\mathcal{I}_{\mathbb{P}(\mathcal{N}_{H/\mathbf{M}_{1}}^{*})\times\mathbb{P}^{3}}=b_{H}^{*}\mathcal{F}_{H}\otimes b_{H}^{*}\pi_{H}^{*}\mathcal{L}'\otimes\mathcal{N}_{\mathbb{P}(\mathcal{N}_{H/\mathbf{M}_{1}}^{*})\times\mathbb{P}^{3}}^{*},
\end{equation*}
we will get $(3)$. On a fiber $x\times\mathbb{P}^{3}$, this means first we take an extension
\begin{equation*}
0\longrightarrow E\longrightarrow G\longrightarrow E\longrightarrow0
\end{equation*}representing $x\in\mathrm{Ext}^{1}(E,E)$, then do a pullback using $B\longrightarrow E$ followed by a pushout using $E\longrightarrow A$. The resulting extension
\begin{equation*}
0\longrightarrow A\longrightarrow\mathcal{K}_{x}\longrightarrow B\longrightarrow0
\end{equation*}is exactly $\theta_{E}(\mathrm{KS}(x))$. This shows that $\mathcal{K}$ is a flat family of $\lambda_{2}$-stable objects.
\end{proof}
If we denote the induced morphism of $\mathcal{K}$ by $\delta:\mathbf{B}\longrightarrow \mathbf{M}_{2}$, then
\begin{propo}
(1) The induced morphism $\delta$ is an isomorphism outside $\mathbb{P}(\mathcal{N}_{H/\mathbf{M}_{1}}^{*})$, and the restriction $\delta|_{\mathbb{P}(\mathcal{N}_{H/\mathbf{M}_{1}}^{*})}$ coincides with $\varphi_{F}|_{\mathbb{P}(\mathcal{N}_{H/\mathbf{M}_{1}}^{*})}$;

(2) The induced morphism $\delta$ is injective on the level of sets and Zariski tangent spaces.
\label{ruyao}
\end{propo}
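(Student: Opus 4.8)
The plan is to analyze $\delta$ separately on the open locus $\mathbf{B}\setminus\mathbb{P}(\mathcal{N}_{H/\mathbf{M}_1}^*)$ and on the exceptional divisor $D:=\mathbb{P}(\mathcal{N}_{H/\mathbf{M}_1}^*)$, since the family $\mathcal{K}$ has a completely different description on each. On $\mathbf{B}\setminus D$ the blow-up map $b$ is an isomorphism onto $\mathbf{M}_1\setminus H$ and $\mathcal{K}=b^*\mathcal{U}_1$; away from $H$ the stability conditions $\lambda_1$ and $\lambda_2$ select the same objects, so the corresponding moduli functors agree on this locus and $\delta$ restricts to an isomorphism from $\mathbf{B}\setminus D$ onto the open subscheme of $\mathbf{M}_2$ parametrizing $\lambda_2$-stable objects that are not nontrivial reverse extensions of $B$ by $A$. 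This gives the first half of $(1)$.

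For the coincidence $\delta|_D=\varphi_F|_D$, I would unwind the fibrewise description of $\mathcal{K}|_{D\times\mathbb{P}^3}$ obtained in the previous proposition: over $x\in D$ the object $\mathcal{K}_x$ is the reverse extension $\theta_E(\mathrm{KS}(x))\in\mathrm{Ext}^1(B,A)$. Under the inclusion $\iota\colon D\hookrightarrow\mathbb{P}(\mathscr{E}xt^1_{\pi_H}(\mathcal{G}_H,\mathcal{F}_H)^*)\subset\mathbf{P}$ of Proposition \ref{duodiao}, this is exactly the extension class carried by the corresponding point of $\mathbf{P}$, so by the universal property of $\mathbf{P}$ in Proposition \ref{juru} the families $\mathcal{K}|_{D\times\mathbb{P}^3}$ and $\iota^*\mathcal{U}_F$ induce the same morphism $D\to\mathbf{M}_2$. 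Hence $\delta|_D=\varphi_F|_D$.

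Set-theoretic injectivity in $(2)$ is then immediate: a point of $\mathbf{B}\setminus D$ maps to a $\lambda_1$-stable object, whereas a point of $D$ maps to a nontrivial reverse extension, which is $\lambda_1$-unstable, so the images of the two loci are disjoint; on each locus injectivity follows from the first half of $(1)$ and from Proposition \ref{zhongyao}$(2)$ respectively. For the injectivity of Zariski tangent maps, only the points of $D$ require work. Fix $x\in D$ with $F:=\mathcal{K}_x$; here $\dim T_{\mathbf{B},x}=12$, and by the computation $\mathrm{Ext}^1(F,F)=\mathbb{C}^{16}$ the map $\theta_F\colon\mathrm{Ext}^1(F,F)\to\mathrm{Ext}^1(A,B)=\mathbb{C}$ of Corollary \ref{ciyao} is surjective with kernel $K_F=T_{\mathbf{P},F}$ of dimension $15$. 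Using the sequence $0\to T_{D,x}\to T_{\mathbf{B},x}\to N_{D/\mathbf{B},x}\to0$ with $\dim N_{D/\mathbf{B},x}=1$: by $\delta|_D=\varphi_F|_D$ the map $T_{\delta,x}$ is injective on the $11$-dimensional subspace $T_{D,x}$ and carries it into $\mathrm{KS}(K_F)=\ker\theta_F$. Thus it suffices to show that a vector $\nu$ transverse to $D$ satisfies $\theta_F(T_{\delta,x}(\nu))\neq0$; granting this, $T_{\delta,x}$ has rank $11+1=12$ and is injective.

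The crux, and the step I expect to be the main obstacle, is the nonvanishing $\theta_F(T_{\delta,x}(\nu))\neq0$. Here I would return to the explicit elementary modification defining $\mathcal{K}$: a first-order deformation of $F$ transverse to $D$ interpolates between the reverse extension $F$ on $D$ and the $\lambda_1$-object $E$ off $D$, and the content of the pullback--pushout description of $\mathcal{K}|_{D\times\mathbb{P}^3}$ is precisely that such a deformation feeds the nonzero class $f=\theta_E(\mathrm{KS}(x))$ (nonzero because $x\in D$ is a nonzero normal direction, by Proposition \ref{KS}) into the generator of $\mathrm{Ext}^1(A,B)$. Concretely I expect $\theta_F(T_{\delta,x}(\nu))$ to recover, up to the nonzero scalar supplied by the conormal line $N^*_{D/\mathbf{B}}$, the class $e=l[1]\circ\zeta\circ k$ produced in the proof of Proposition \ref{cao}; this nonvanishing is exactly the transversality built into the elementary modification, and verifying it is where the bulk of the diagram-chasing lies.
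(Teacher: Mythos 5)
Your treatment of part $(1)$ and of set-theoretic injectivity is sound. For the first half of $(1)$ you argue exactly as the paper does ($\mathcal{K}=b^{*}\mathcal{U}_{1}$ off the exceptional divisor). For $\delta|_{\mathbb{P}(\mathcal{N}_{H/\mathbf{M}_{1}}^{*})}=\varphi_{F}|_{\mathbb{P}(\mathcal{N}_{H/\mathbf{M}_{1}}^{*})}$ you take a mildly different route: you match the two families fibrewise (using that $\mathcal{K}_{x}$ is the reverse extension $\theta_{E}(\mathrm{KS}(x))$) and invoke the universal property of $\mathbf{P}$, which works because the exceptional divisor is reduced and the objects are simple; the paper instead identifies the extension class of the sequence $(3)$ with the restriction of the identity element classifying $(2)$, i.e.\ it works at the level of the relative $\mathrm{Ext}$ sheaves rather than fibre by fibre. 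Either way is acceptable.

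The genuine gap is in the tangent-space injectivity at a point $x$ of the exceptional divisor. Your reduction is correct: $T_{\delta,x}$ is injective on the $11$-dimensional $T_{D,x}$ with image in $\ker\theta_{F}$, so everything hinges on showing $\theta_{F}(T_{\delta,x}(\nu))\neq0$ for a normal direction $\nu$. But you do not prove this; you state that you \emph{expect} it to follow from the elementary modification and that ``verifying it is where the bulk of the diagram-chasing lies.'' That is precisely the missing step, and it is not routine: the deformation of $F=\mathcal{K}_{x}$ in the direction $\nu$ is \emph{not} simply the pullback of a deformation of $E$, because the elementary modification replaces $E$ by $F$ along the divisor, so identifying $\theta_{F}(T_{\delta,x}(\nu))$ with the class $e$ requires an actual computation with the pullback--pushout description of $\mathcal{K}$ that you have not carried out. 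The paper avoids this computation entirely: it argues that if $T_{\delta,x}(\nu)$ lay in $T_{\delta,x}(T_{D,x})\subset\mathrm{im}\,T_{\varphi_{F},x}$, then the first-order arc representing $\nu$ would lift to $\mathbf{P}$ and hence factor through the fibre product $\mathbf{B}\times_{\mathbf{M}_{2}}\mathbf{P}$, whose underlying set is the exceptional divisor, forcing $\nu\in T_{D,x}$ --- a contradiction. The nonvanishing $\theta_{F}(v_{x})\neq0$ that you want to use as input is, in the paper, deduced as a \emph{consequence} of the proposition (Remark \ref{mafuyu} $(1)$), so your proposed logical order is reversed relative to the paper's and the burden of proof you have set yourself is exactly the part the paper chose not to compute directly. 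To complete your argument you must either carry out that diagram chase or substitute the factorization argument above.
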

\begin{proof}
$\delta$ is an isomorphism outside $\mathbb{P}(\mathcal{N}_{H/\mathbf{M}_{1}}^{*})$ because $\mathcal{K}$ is the same with $\mathcal{U}_{1}$. On the other hand, under the identification 
\begin{align*}
&\mathrm{Ext}^{1}\left(b_{H}^{*}\mathcal{G}_{H}\otimes b_{H}^{*}\pi_{H}^{*}\mathcal{L}\otimes b_{H}^{*}\pi_{H}^{*}\mathcal{L}',b_{H}^{*}\mathcal{F}_{H}\otimes b_{H}^{*}\pi_{H}^{*}\mathcal{L}'\otimes\mathcal{N}_{\mathbb{P}(\mathcal{N}_{H/\mathbf{M}_{1}}^{*})\times\mathbb{P}^{3}}^{*}\right)\\&=\mathrm{Ext}^{1}\left(b_{H}^{*}\mathcal{G}_{H}\otimes b_{H}^{*}\pi_{H}^{*}\mathcal{L},b_{H}^{*}\mathcal{F}_{H}\otimes\mathcal{N}_{\mathbb{P}(\mathcal{N}_{H/\mathbf{M}_{1}}^{*})\times\mathbb{P}^{3}}^{*}\right)\nonumber\\&=\mathrm{H}^{0}\left(\mathbb{P}(\mathcal{N}_{H/\mathbf{M}_{1}}^{*}),\mathscr{E}xt_{\pi_{\mathbb{P}(\mathcal{N}_{H/\mathbf{M}_{1}}^{*})}}^{1}\left(b_{H}^{*}\mathcal{G}_{H}\otimes b_{H}^{*}\pi_{H}^{*}\mathcal{L},b_{H}^{*}\mathcal{F}_{H}\otimes\pi_{\mathbb{P}(\mathcal{N}_{H/\mathbf{M}_{1}}^{*})}^{*}\mathcal{O}_{\mathbb{P}(\mathcal{N}_{H/\mathbf{M}_{1}}^{*})}(1)\right)\right)\\&=\mathrm{H}^{0}\left(H,\mathscr{E}xt_{\pi_{H}}^{1}\left(\mathcal{G}_{H}\otimes\pi_{H}^{*}\mathcal{L},\mathcal{F}_{H}\right)\otimes\mathcal{N}_{H/\mathbf{M}_{1}}^{*}\right)\\&=\mathrm{Hom}\left(\mathcal{N}_{H/\mathbf{M}_{1}},\mathscr{E}xt_{\pi_{H}}^{1}\left(\mathcal{G}_{H}\otimes\pi_{H}^{*}\mathcal{L},\mathcal{F}_{H}\right)\right),
\end{align*}
\noindent the extension $(3)$ corresponds to the injection $i$ from $\mathcal{N}_{H/\mathbf{M}_{1}}$ to $\mathscr{E}xt_{\pi_{H}}^{1}(\mathcal{G}_{H}\otimes\pi_{H}^{*}\mathcal{L},\mathcal{F}_{H})$ constructed in Proposition \ref{duodiao} via the Kodaira-Spencer map. Similarly in Proposition \ref{juru}, the extension $(2)$ corresponds to the identity $id$ in $\mathrm{Hom}(\mathscr{E}xt^{1}_{\pi}(\mathcal{G},\mathcal{F}),\mathscr{E}xt^{1}_{\pi}(\mathcal{G},\mathcal{F}))=\mathrm{Ext}^{1}(h^{*}\mathcal{G},h^{*}\mathcal{F}\otimes\pi_{\mathbf{P}}\mathcal{O}_{\mathbf{P}}(1))$. Notice that $i$ is the restriction of $id$ to $\mathcal{N}_{H/\mathbf{M}_{1}}$, this means $(3)$ is a restriction of $(2)$ to $\mathbb{P}(\mathcal{N}_{H/\mathbf{M}_{1}}^{*})\times\mathbb{P}^{3}$ up to tensoring a pullback of some line bundle on $\mathbb{P}(\mathcal{N}_{H/\mathbf{M}_{1}}^{*})$. Therefore $\delta|_{\mathbb{P}(\mathcal{N}_{H/\mathbf{M}_{1}}^{*})\times\mathbb{P}^{3}}=\varphi_{F}|_{\mathbb{P}(\mathcal{N}_{H/\mathbf{M}_{1}}^{*})\times\mathbb{P}^{3}}$. In particular, $\delta|_{\mathbb{P}(\mathcal{N}_{H/\mathbf{M}_{1}}^{*})\times\mathbb{P}^{3}}$ is injective on the level of Zariski tangent spaces since $\varphi_{F}$ is. To show $\delta$ is injective on the level of Zariski tangent spaces, it only remains to show that the normal direction $v_{x}$ of $\mathbb{P}(\mathcal{N}_{H/\mathbf{M}_{1}}^{*})$ in $\mathbf{B}$ at a point $x\in\mathbb{P}(\mathcal{N}_{H/\mathbf{M}_{1}}^{*})$ is not sent to the image of $T_{\mathbb{P}(\mathcal{N}_{H/\mathbf{M}_{1}}^{*}),x}$ under $T_{\delta,x}$. If it were so, we suppose $\xi:\mathrm{Spec}\mathbb{C}[\varepsilon]/(\varepsilon^{2})\longrightarrow\mathbf{B}$ represents $v_{x}$. Notice that we have a pullback diagram
\begin{center}
$\begin{CD}
\mathbb{P}(\mathcal{N}_{H/\mathbf{M}_{1}}^{*}) @>>> \mathbf{P}\\
@VVV @V\varphi_{F}VV\\
\mathbf{B} @>\delta>> \mathbf{M}_{2}
\end{CD}$
\end{center} 
since $\delta(\mathbf{B})\cap\varphi_{F}(\mathbf{P})=\delta(\mathbb{P}(\mathcal{N}_{H/\mathbf{M}_{1}}^{*}))$. Because $T_{\delta,x}(T_{\mathbb{P}(\mathcal{N}_{H/\mathbf{M}_{1}}^{*}),x})$ is contained in $T_{\varphi_{F},x}$, we can lift $\delta\circ\xi$ to $\xi':\mathrm{Spec}\mathbb{C}[\varepsilon]/(\varepsilon^{2})\longrightarrow \mathbf{P}$ that makes the pullback diagram above commutative, hence $\xi$ factors through $\mathbb{P}(\mathcal{N}_{H/\mathbf{M}_{1}}^{*})$. This implies $v_{x}$ is in $T_{\mathbb{P}(\mathcal{N}_{H/\mathbf{M}_{1}}^{*}),x}$, which is a contradiction.
\end{proof}
\begin{remark}
(1) The last argument also shows that the normal direction $v_{x}$ is not mapped to the image of $T_{\mathbf{P},\mathcal{K}_{x}}$ under  $T_{\varphi_{F},F}$. By Corollary $4.7$, $T_{\varphi_{F},F}(T_{\mathbf{P},\mathcal{K}_{x}})$ is the kernel of $\theta_{F}$, so we must have $\theta_{F}(v_{x})\neq0$;

(2) Since $T_{\varphi_{F},F}(T_{\mathbf{P},F})=\mathbb{C}^{15}$ and $T_{\delta,F}(T_{\mathbf{B},F})=\mathbb{C}^{12}$, the pullback diagram in the above proof also implies $T_{\varphi_{F},F}(T_{\mathbf{P},F})\cap T_{\delta,F}(T_{\mathbf{B},F})=T_{\delta,F}(T_{\mathbb{P}(\mathcal{N}_{H/\mathbf{M}_{1}}^{*}),F})=\mathbb{C}^{11}$.
\label{mafuyu}
\end{remark}
\smallskip

\noindent\textbf{Obstruction computation.} In this subsection, we study the deformation theory of complexes on the intersection of the two irreducible components of $\mathbf{M}_{2}$. We give explicit local equations defining $\mathbf{M}_{2}$ at a point in the intersection. In particular, this will imply the two irreducible components of $\mathbf{M}_{2}$ intersect transversely.

Recall that we have constructed two morphisms $\delta:\mathbf{B}\longrightarrow \mathbf{M}_{2}$ and $\varphi_{F}:\mathbf{P}\longrightarrow \mathbf{M}_{2}$, both of them are injective on the level of sets and Zariski tangent spaces. By the definition of a simple wall-crossing, any $\lambda_{2}$-stable object has to lie in the image of one of the two morphisms. Thus $\mathbf{M}_{2}$ has two irreducible components corresponding to the image of $\delta$ and $\varphi_{F}$. The intersection of the two components is the image of the exceptional divisor $\mathbb{P}(\mathcal{N}_{H/\mathbf{M}_{1}}^{*})$ by Proposition \ref{ruyao}. Outside the intersection of the two components, $\mathbf{M}_{2}$ is smooth by Remark \ref{fadian} and Remark \ref{mafuyu} (1). To study the singularity of $\mathbf{M}_{2}$, we fix an $\lambda_{2}$-semistable object $F$ in $\mathbb{P}(\mathcal{N}_{H/\mathbf{M}_{1}}^{*})$, then we have
\begin{propo}
The tangent vectors of $\mathbf{M}_{2}$ at $F$ in the subspaces $T_{\varphi_{F},F}(T_{\mathbf{P},F})$ and $T_{\delta,F}(T_{\mathbf{B},F})$ correspond to versal deformations of $F$.
\label{mua}
\end{propo}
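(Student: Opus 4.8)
The plan is to read off the local structure of $\mathbf{M}_{2}$ at $F$ from the deformation theory of $F$ as an object of $\mathrm{D}^{\mathrm{b}}(\mathbb{P}^{3})$. By \cite{Ina02} and \cite{Lie06}, in a formal (equivalently analytic) neighborhood of $F$ the moduli space $\mathbf{M}_{2}$ is isomorphic to the zero locus of a Kuranishi map $\kappa\colon\mathrm{Ext}^{1}(F,F)\longrightarrow\mathrm{Ext}^{2}(F,F)$ whose differential vanishes at the origin; in particular $T_{\mathbf{M}_{2},F}=\mathrm{Ext}^{1}(F,F)$ and all obstructions lie in $\mathrm{Ext}^{2}(F,F)$. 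What I want to establish is that the two subspaces $T_{\varphi_{F},F}(T_{\mathbf{P},F})$ and $T_{\delta,F}(T_{\mathbf{B},F})$ consist entirely of unobstructed directions, i.e. each is the tangent space of an honest smooth family of $\lambda_{2}$-stable objects through $F$; this is the content of the phrase ``correspond to versal deformations''.

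First I would treat the branch coming from $\mathbf{P}$. By Proposition \ref{juru}, $\mathbf{P}$ is a smooth projective bundle equipped with the universal extension $\mathcal{U}_{F}$, and by Proposition \ref{zhongyao}(2) the morphism $\varphi_{F}$ is injective on Zariski tangent spaces; moreover Corollary \ref{ciyao} identifies $T_{\mathbf{P},F}$ with $K_{F}\subset\mathrm{Ext}^{1}(F,F)$ via the Kodaira-Spencer map, so $\varphi_{F}$ realizes $T_{\varphi_{F},F}(T_{\mathbf{P},F})$ as a $15$-dimensional subspace of $\mathrm{Ext}^{1}(F,F)$. Since $\mathbf{P}$ is smooth at $F$, restricting $\mathcal{U}_{F}$ to the formal completion of $\mathbf{P}$ at $F$ yields a formal deformation of $F$ over a smooth base whose induced map to the Kuranishi germ is a closed immersion of a smooth $15$-dimensional subgerm. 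Every tangent vector in $T_{\varphi_{F},F}(T_{\mathbf{P},F})$ therefore lifts to this family to all orders and is unobstructed; equivalently, $\mathcal{U}_{F}$ is a versal deformation of $F$ in these directions.

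The argument for $\mathbf{B}$ is structurally identical. The variety $\mathbf{B}$ is the blow-up of the smooth $\mathbf{M}_{1}$ along the smooth center $H$, hence is itself smooth, and it carries the flat family $\mathcal{K}$ of $\lambda_{2}$-stable objects constructed by elementary modification. By Proposition \ref{ruyao}(2) the induced morphism $\delta$ is injective on Zariski tangent spaces, so $T_{\delta,F}(T_{\mathbf{B},F})$ is a $12$-dimensional subspace of $\mathrm{Ext}^{1}(F,F)$ as recorded in Remark \ref{mafuyu}(2). Restricting $\mathcal{K}$ to the formal completion of $\mathbf{B}$ at $F$ again produces a formal deformation over a smooth base realizing precisely these tangent directions, so they too are unobstructed and come from a versal deformation of $F$.

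The main obstacle is to make the passage from ``a family over a smooth base with injective Kodaira-Spencer map'' to ``unobstructedness in the image directions'' precise in the derived category setting. Concretely, one must invoke the representability and formal smoothness statements of \cite{Ina02} and \cite{Lie06} to ensure that $\mathcal{U}_{F}$ and $\mathcal{K}$ pull back along morphisms from Artinian schemes, so that the smooth germs of $\mathbf{P}$ and $\mathbf{B}$ at $F$ genuinely map into the Kuranishi germ and the chosen tangent directions lift order by order. Once this is in place the conclusion is automatic, since a closed immersion of a smooth germ into the Kuranishi germ forces the corresponding first-order directions to be unobstructed. This proposition then feeds directly into the explicit obstruction computation realizing $\mathbf{M}_{2}$ locally as the union of these two smooth branches and into the proof of their transversality.
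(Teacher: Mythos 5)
Your proposal is correct and takes essentially the same route as the paper: both arguments reduce versality to the smoothness of $\mathbf{P}$ and $\mathbf{B}$ together with the existence of the flat families $\mathcal{U}_{F}$ and $\mathcal{K}$, so that any first-order deformation in the image directions lifts through the smooth source to arbitrary Artinian thickenings. The paper phrases this as a direct infinitesimal lifting of maps from $\mathrm{Spec}\,S$ rather than via formal completions and Kuranishi germs, but the content is identical.
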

\begin{proof}
Suppose a Zariski tangent vector of $\mathbf{M}_{2}$ at $F$ in $T_{\varphi_{F},F}(T_{\mathbf{P},F})$ is represented by a morphism $\eta:\mathrm{Spec}\mathbb{C}[\varepsilon]/(\varepsilon^{2})\longrightarrow\mathbf{M}_{2}$, then $\eta$ factors through $\varphi_{F}:\mathbf{P}\longrightarrow\mathbf{M}_{2}$:
\begin{displaymath}
\xymatrix {\mathrm{Spec}\mathbb{C}[\varepsilon]/(\varepsilon^{2})\ar[r]\ar[d]^{\eta'}\ar[dr]^{\eta}&\mathrm{Spec}S\ar[dl]^{\xi} \\\mathbf{P}\ar[r]^{\varphi_{F}}&\mathbf{M}_{2}
}
\end{displaymath}
If $S$ is a finite dimensional local Artin $\mathbb{C}$-algebra with a local surjection $S\longrightarrow\mathbb{C}[\varepsilon]/(\varepsilon^{2})$, then we can lift $\eta'$ to $\xi:\mathrm{Spec}S\longrightarrow\mathbf{P}$ since $\mathbf{P}$ is smooth. By composing $\xi$ with $\varphi_{F}$, we get a lift of $\eta$. Hence $\eta$ corresponds to a versal deformation. A similar argument works for tangent vectors in $T_{\delta,F}(T_{\mathbf{B},F})$.
\end{proof} 
In order to show $T_{\varphi_{F},F}(T_{\mathbf{P},F})$ and $T_{\delta,F}(T_{\mathbf{B},F})$ are all the versal deformations of $F$, we study the quadratic part of the Kuranishi map $\kappa_{2}:T_{\mathbf{M}_{2},F}\cong\mathrm{Ext}^{1}(F,F)\longrightarrow\mathrm{Ext}^{2}(F,F)$. First we give a decomposition of $T_{\mathbf{M}_{2},F}\cong\mathrm{Ext}^{1}(F,F)$ with respect to some geometric structures. In the blow-up $\mathbf{B}$, we have $T_{\mathbf{B},F}=N_{\mathbb{P}(\mathcal{N}_{H/\mathbf{M}_{1}}^{*})/\mathbf{B},F}\oplus T_{\mathbb{P}(\mathcal{N}_{H/\mathbf{M}_{1}}^{*}),F}$ and $N_{\mathbb{P}(\mathcal{N}_{H/\mathbf{M}_{1}}^{*})/\mathbf{B},F}$ is 1-dimensional. Suppose it is generated by a vector $v_{F}$, then we have
\begin{propo}
The Zariski tangent space $T_{\mathbf{M}_{2},F}\cong\mathrm{Ext}^{1}(F,F)$ has the following decomposition
\begin{equation}
T_{\mathbf{M}_{2},F}=\mathbb{C}v_{F}\oplus T_{\mathbb{P}(N_{H/\mathbf{M}_{1},E}^{*}),F}\oplus N_{\mathbb{P}(N_{H/\mathbf{M}_{1},E}^{*})/\mathbb{P}(\mathrm{Ext}^{1}(B,A)^{*}),F}\oplus T_{H,E}\oplus N_{H/\mathbb{P}^{3}\times(\mathbb{P}^{3})^{*},E}.
\end{equation}In this decomposition,
\begin{align*}
T_{\delta,F}(T_{\mathbf{B},F})&=\mathbb{C}v_{F}\oplus T_{\mathbb{P}(N_{H/\mathbf{M}_{1},E}^{*}),F}\oplus T_{H,E}\\ T_{\varphi_{F},F}(T_{\mathbf{P},F})&=T_{\mathbb{P}(N_{H/\mathbf{M}_{1},E}^{*}),F}\oplus N_{\mathbb{P}(N_{H/\mathbf{M}_{1},E}^{*})/\mathbb{P}(\mathrm{Ext}^{1}(B,A)^{*}),F}\oplus T_{H,E}\oplus N_{H/\mathbb{P}^{3}\times(\mathbb{P}^{3})^{*},E}
\end{align*}
\label{miao}
\end{propo}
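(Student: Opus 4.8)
The plan is to assemble the five-term decomposition out of the two tangent subspaces we already control. Since $\delta$ and $\varphi_F$ are injective on Zariski tangent spaces by Proposition \ref{ruyao} $(2)$ and Proposition \ref{zhongyao} $(2)$, I regard $T_{\mathbf{B},F}$ and $T_{\mathbf{P},F}$ as genuine subspaces of $\mathrm{Ext}^1(F,F)\cong T_{\mathbf{M}_2,F}$, which is $16$-dimensional by Proposition \ref{cao}. On the blow-up side, the exceptional divisor $\mathbb{P}(\mathcal{N}_{H/\mathbf{M}_1}^*)$ is smooth of codimension one in $\mathbf{B}$, so $T_{\mathbf{B},F}=\mathbb{C}v_F\oplus T_{\mathbb{P}(\mathcal{N}_{H/\mathbf{M}_1}^*),F}$ with $\mathbb{C}v_F=N_{\mathbb{P}(\mathcal{N}_{H/\mathbf{M}_1}^*)/\mathbf{B},F}$; and since $\mathbb{P}(\mathcal{N}_{H/\mathbf{M}_1}^*)$ is a $\mathbb{P}^6$-bundle over $H$, the relative tangent sequence of this bundle (after choosing a lift of the base directions) splits $T_{\mathbb{P}(\mathcal{N}_{H/\mathbf{M}_1}^*),F}$ into the fibre part $T_{\mathbb{P}(N_{H/\mathbf{M}_1,E}^*),F}$ and the base part $T_{H,E}$. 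This reproduces the asserted formula for $T_{\delta,F}(T_{\mathbf{B},F})$.

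On the projective-bundle side, $\mathbf{P}$ is a $\mathbb{P}^9$-bundle over $\mathbb{P}^3\times(\mathbb{P}^3)^*$, so $T_{\mathbf{P},F}$ decomposes into a fibre part $T_{\mathbb{P}(\mathrm{Ext}^1(B,A)^*),F}$ and a base part $T_{\mathbb{P}^3\times(\mathbb{P}^3)^*,E}$. For the fibre part, Proposition \ref{duodiao} exhibits $\mathbb{P}(N_{H/\mathbf{M}_1,E}^*)$ as a linearly embedded $\mathbb{P}^6$ inside the fibre $\mathbb{P}(\mathrm{Ext}^1(B,A)^*)=\mathbb{P}^9$, whence $T_{\mathbb{P}(\mathrm{Ext}^1(B,A)^*),F}=T_{\mathbb{P}(N_{H/\mathbf{M}_1,E}^*),F}\oplus N_{\mathbb{P}(N_{H/\mathbf{M}_1,E}^*)/\mathbb{P}(\mathrm{Ext}^1(B,A)^*),F}$. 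For the base part, $H$ is the incidence divisor in $\mathbb{P}^3\times(\mathbb{P}^3)^*$, giving $T_{\mathbb{P}^3\times(\mathbb{P}^3)^*,E}=T_{H,E}\oplus N_{H/\mathbb{P}^3\times(\mathbb{P}^3)^*,E}$. Concatenating these four pieces yields the asserted formula for $T_{\varphi_F,F}(T_{\mathbf{P},F})$.

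It remains to glue the two descriptions inside $\mathrm{Ext}^1(F,F)$. By Proposition \ref{ruyao} $(1)$ the restrictions $\delta|_{\mathbb{P}(\mathcal{N}_{H/\mathbf{M}_1}^*)}$ and $\varphi_F|_{\mathbb{P}(\mathcal{N}_{H/\mathbf{M}_1}^*)}$ coincide, so the two summands $T_{\mathbb{P}(N_{H/\mathbf{M}_1,E}^*),F}$ and $T_{H,E}$ tangent to the exceptional divisor are the same subspaces whether seen through $T_{\delta,F}$ or through $T_{\varphi_F,F}$; together these give the $11$-dimensional overlap recorded in Remark \ref{mafuyu} $(2)$. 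The three remaining summands $\mathbb{C}v_F$, $N_{\mathbb{P}(N_{H/\mathbf{M}_1,E}^*)/\mathbb{P}(\mathrm{Ext}^1(B,A)^*),F}$ and $N_{H/\mathbb{P}^3\times(\mathbb{P}^3)^*,E}$ supply the remaining $1$, $3$ and $1$ dimensions. Since $\dim T_{\delta,F}(T_{\mathbf{B},F})=12$, $\dim T_{\varphi_F,F}(T_{\mathbf{P},F})=15$ and their intersection is $11$-dimensional, the sum of the two images has dimension $12+15-11=16=\dim\mathrm{Ext}^1(F,F)$; hence the two images already span $T_{\mathbf{M}_2,F}$ and the five listed pieces form a direct sum. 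The main obstacle is not the dimension bookkeeping but making these geometric identifications canonical: one must verify that the fibre $\mathbb{P}^6$ of the exceptional divisor sits inside the fibre $\mathbb{P}^9$ of $\mathbf{P}$ compatibly with the Kodaira--Spencer identifications of Proposition \ref{duodiao} and Proposition \ref{juru}, so that the shared summands genuinely coincide rather than merely matching in dimension. Once Proposition \ref{ruyao} $(1)$ supplies this compatibility, the decomposition follows.
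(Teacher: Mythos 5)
Your proposal is correct and follows essentially the same route as the paper: both decompose $T_{\mathbf{P},F}$ via the $\mathbb{P}^{9}$-bundle structure over $\mathbb{P}^{3}\times(\mathbb{P}^{3})^{*}$ together with the embeddings $\mathbb{P}(N_{H/\mathbf{M}_{1},E}^{*})\subset\mathbb{P}(\mathrm{Ext}^{1}(B,A)^{*})$ and $H\subset\mathbb{P}^{3}\times(\mathbb{P}^{3})^{*}$, and both rely on Remark \ref{mafuyu} to account for the direction $v_{F}$. The only cosmetic difference is that the paper isolates $\mathbb{C}v_{F}$ directly as a complement of $T_{\mathbf{P},F}=\ker\theta_{F}$ using $\theta_{F}(v_{F})\neq0$ from Remark \ref{mafuyu} (1), whereas you obtain it from the blow-up structure of $\mathbf{B}$ and the dimension count $12+15-11=16$ via Remark \ref{mafuyu} (2); these are equivalent.
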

\begin{proof}
By Remark \ref{mafuyu} (1), $\theta_{F}(v_{F})\neq0$, hence we can decompose $\mathrm{Ext}^{1}(F,F)=\mathbb{C}v_{F}\oplus T_{\mathbf{P},F}$ because the kernel of $\theta_{F}$ is $T_{\mathbf{P},F}$. On the other hand, $\mathbf{P}=\mathbb{P}(\mathscr{E}xt^{1}_{\pi}(\mathcal{G},\mathcal{F})^{*})$ is a projective bundle over $\mathbb{P}^{3}\times(\mathbb{P}^{3})^{*}$, so we have $T_{\mathbf{P},F}=T_{\mathbb{P}({\mathrm{Ext}^{1}(B,A)^{*}),F}}\oplus T_{\mathbb{P}^{3}\times(\mathbb{P}^{3})^{*},(A,B)}$. To give further decomposition, denote $E$ the nontrivial extension of $A$ by $B$, we have that $\mathbb{P}(N_{H/\mathbf{M}_{1},E}^{*})$ is embedded in $\mathbb{P}(\mathrm{Ext}^{1}(B,A)^{*})$ via the Kodaira-Spencer map by Proposition \ref{KS}, so $T_{\mathbb{P}({\mathrm{Ext}^{1}(B,A)^{*}),F}}=T_{\mathbb{P}(N_{H/\mathbf{M}_{1},E}^{*}),F}\oplus N_{\mathbb{P}(N_{H/\mathbf{M}_{1},E}^{*})/\mathbb{P}(\mathrm{Ext}^{1}(B,A)^{*}),F}$. Also notice that the incidence hyperplane $H$ is embedded in $\mathbb{P}^{3}\times(\mathbb{P}^{3})^{*}$, so $T_{\mathbb{P}^{3}\times(\mathbb{P}^{3})^{*},(A,B)}=T_{H,E}\oplus N_{H/\mathbb{P}^{3}\times(\mathbb{P}^{3})^{*},E}$. By composing all the decompositions above, we have the proposition.
\end{proof}

The importance of this decomposition is that some of the summands have direct relations with the $\mathrm{Ext}^{2}$ groups in Lemma \ref{zheteng}, Proposition \ref{KS} and Proposition \ref{cao}, which becomes crucial later when we compute $\kappa_{2}$. Fix a nontrivial  $\zeta\in\mathrm{Ext}^{1}(F,F)$. Let $e:A\longrightarrow B[1]$ correspond to the nontrivial extension $E$ and $f:B\longrightarrow A[1]$ correspond to $F$, name the arrows $ A\overset{k}{\longrightarrow}F\overset{l}{\longrightarrow}B$. Then we have the following two lemmas:
\begin{lemma}
The normal space $N_{\mathbb{P}(N_{H/\mathbf{M}_{1},E}^{*})/\mathbb{P}(\mathrm{Ext}^{1}(B,A)^{*}),F}$ can be identified with $\mathrm{Ext}^{2}(A,A)$ under a canonical isomorphism. If $\zeta$ belongs to $N_{\mathbb{P}(N_{H/\mathbf{M}_{1},E}^{*})/\mathbb{P}(\mathrm{Ext}^{1}(B,A)^{*}),F}$ in (4), then $\zeta=k[1]\circ t\circ l$ for some $t\in \mathrm{Ext}^{1}(B,A)$ such that $t[1]\circ e$ is nonzero in $\mathrm{Ext}^{2}(A,A)$.
\label{kao}
\end{lemma}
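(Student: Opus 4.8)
The plan is to reduce the entire statement to the contravariant long exact sequence attached to the triangle $B\overset{h}{\longrightarrow}E\overset{j}{\longrightarrow}A\overset{e}{\longrightarrow}B[1]$ by applying $\mathrm{Hom}(-,A)$. First I would pin down the linear-algebraic shape of the normal space. By Proposition \ref{duodiao} the subvariety $\mathbb{P}(N_{H/\mathbf{M}_{1},E}^{*})$ sits inside $\mathbb{P}(\mathrm{Ext}^{1}(B,A)^{*})$ as the projectivization of the linear subspace $N_{H/\mathbf{M}_{1},E}\subseteq\mathrm{Ext}^{1}(B,A)$ (the Kodaira--Spencer embedding of Proposition \ref{KS}). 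Hence at the point $F$, which corresponds to a line $\mathbb{C}f$ with $f\in N_{H/\mathbf{M}_{1},E}$, the normal space is canonically $\mathrm{Hom}(\mathbb{C}f,\,\mathrm{Ext}^{1}(B,A)/N_{H/\mathbf{M}_{1},E})$, and everything comes down to computing this quotient.

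To compute it I would first note that $N_{H/\mathbf{M}_{1},E}=\mathrm{im}(\theta_{E})=\mathrm{im}\bigl(h^{*}\colon\mathrm{Ext}^{1}(E,A)\longrightarrow\mathrm{Ext}^{1}(B,A)\bigr)$: indeed $\theta_{E}=h^{*}\circ j_{*}$, and $j_{*}\colon\mathrm{Ext}^{1}(E,E)\longrightarrow\mathrm{Ext}^{1}(E,A)$ is surjective by Lemma \ref{zheteng}, its cokernel $\mathrm{Ext}^{2}(E,B)$ being zero. In the long exact sequence for $\mathrm{Hom}(-,A)$ the connecting map $\partial\colon\mathrm{Ext}^{1}(B,A)\longrightarrow\mathrm{Ext}^{2}(A,A)$ is composition with $e$, namely $t\mapsto t[1]\circ e$, and exactness gives $\ker\partial=\mathrm{im}(h^{*})=N_{H/\mathbf{M}_{1},E}$. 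Since $\dim\mathrm{Ext}^{1}(B,A)=10$, $\dim N_{H/\mathbf{M}_{1},E}=7$ and $\dim\mathrm{Ext}^{2}(A,A)=3$ by Lemma \ref{diao}, a dimension count forces $\partial$ to be surjective, so it descends to a canonical isomorphism $\mathrm{Ext}^{1}(B,A)/N_{H/\mathbf{M}_{1},E}\overset{\sim}{\longrightarrow}\mathrm{Ext}^{2}(A,A)$. Combined with the trivialization of $\mathrm{Hom}(\mathbb{C}f,-)$ afforded by $f$, this yields the first assertion.

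For the second assertion I would make explicit how the fibre directions of the projective bundle $\mathbf{P}$ embed in $\mathrm{Ext}^{1}(F,F)$. Varying the extension class of $0\longrightarrow A\overset{k}{\longrightarrow}F\overset{l}{\longrightarrow}B\longrightarrow0$ inside $\mathrm{Ext}^{1}(B,A)$ corresponds, via the Kodaira--Spencer map, to $\mu\colon\mathrm{Ext}^{1}(B,A)\longrightarrow\mathrm{Ext}^{1}(F,F)$, $t\mapsto k[1]\circ t\circ l$, whose image is the fibre tangent space $T_{\mathbb{P}(\mathrm{Ext}^{1}(B,A)^{*}),F}$. Using $f\circ l=0$ together with the vanishings $\mathrm{Hom}(A,B)=0$ and $\mathrm{Hom}(A[1],B)=0$ of Lemma \ref{diao} (so that $\ker l^{*}=\mathbb{C}f$ and $k_{*}$ is injective on $\mathrm{im}\,l^{*}$), one checks $\ker\mu=\mathbb{C}f$, so $\mu$ induces an injection of $\mathrm{Ext}^{1}(B,A)/\mathbb{C}f$. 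Under this injection $T_{\mathbb{P}(N_{H/\mathbf{M}_{1},E}^{*}),F}$ is the image of $N_{H/\mathbf{M}_{1},E}$, so the normal summand in the decomposition $(4)$ of Proposition \ref{miao} is the image of those $t$ with $t\notin N_{H/\mathbf{M}_{1},E}$, i.e.\ with $\partial(t)=t[1]\circ e\neq0$. Finally, because $F\in\mathbb{P}(N_{H/\mathbf{M}_{1},E}^{*})$ forces $f\in N_{H/\mathbf{M}_{1},E}=\ker\partial$, the kernel $\mathbb{C}f$ of $\mu$ lies inside $\ker\partial$; hence the assignment $\zeta=k[1]\circ t\circ l\mapsto t[1]\circ e$ is independent of the chosen lift $t$ and is precisely the canonical isomorphism above, giving the stated form of $\zeta$ with $t[1]\circ e\neq0$.

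I expect the main obstacle to be the bookkeeping that matches the abstract normal-space computation with the concrete description inside $\mathrm{Ext}^{1}(F,F)$: one must correctly identify the connecting map $\partial$ with $t\mapsto t[1]\circ e$, determine $\ker\mu$ exactly rather than merely the obvious inclusion $\mathbb{C}f\subseteq\ker\mu$, and verify the compatibility $\ker\mu\subseteq\ker\partial$ which is what makes the correspondence $\zeta\mapsto t[1]\circ e$ well defined. The projective-space normal-bundle computation and the dimension counts are routine given Lemma \ref{diao} and Lemma \ref{zheteng}.
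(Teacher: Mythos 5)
Your proposal is correct and follows essentially the same route as the paper: both identify $N_{\mathbb{P}(N_{H/\mathbf{M}_{1},E}^{*})/\mathbb{P}(\mathrm{Ext}^{1}(B,A)^{*}),F}$ with $\mathrm{Ext}^{1}(B,A)/N_{H/\mathbf{M}_{1},E}$, recognize $N_{H/\mathbf{M}_{1},E}$ as $\mathrm{im}(\theta_{E})=\ker\bigl(t\mapsto t[1]\circ e\bigr)$ with cokernel $\mathrm{Ext}^{2}(A,A)$ read off from Lemma \ref{zheteng}, and realize the normal summand inside $\mathrm{Ext}^{1}(F,F)$ via $t\mapsto k[1]\circ t\circ l$ with kernel $\mathbb{C}f$ as in Proposition \ref{cao}. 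You merely make explicit (connecting map, dimension count, $\ker\mu=\mathbb{C}f$) what the paper reads directly off its diagrams.
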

\begin{proof}
By Lemma \ref{zheteng}, we know that the cokernel of $\theta_{E}:\mathrm{Ext}^{1}(E,E)\longrightarrow\mathrm{Ext}^{1}(B,A)$ is $\mathrm{Ext}^{2}(A,A)$. By Proposition \ref{KS}, we know that the Kodaira-Spencer map $\mathrm{KS}$ induces an isomorphism between the image of $\theta_{E}$ and $N_{H/\mathbf{M}_{1},E}$. On the other hand, $N_{\mathbb{P}(N_{H/\mathbf{M}_{1},E}^{*})/\mathbb{P}(\mathrm{Ext}^{1}(B,A)^{*}),F}$ is equal to the quotient $\mathrm{Ext}^{1}(B,A)/N_{H/\mathbf{M}_{1},E}$, so $N_{\mathbb{P}(N_{H/\mathbf{M}_{1},E}^{*})/\mathbb{P}(\mathrm{Ext}^{1}(B,A)^{*}),F}\cong\mathrm{Ext}^{2}(A,A)$. To prove the second statement, we look at the square 
\begin{center}
$\begin{CD}
\mathrm{Ext}^{1}(B,A) @>l^{*}>> \mathrm{Ext}^{1}(F,A)\\
@Vk[1]_{*}VV @Vk[1]_{*}VV \\
\mathrm{Ext}^{1}(B,F) @>l^{*}>> \mathrm{Ext}^{1}(F,F)
\end{CD}$
\end{center} in Proposition \ref{cao}. There is an injection $\mathrm{Ext}^{1}(B,A)/\mathbb{C}f\longrightarrow\mathrm{Ext}^{1}(F,F)$, which is the same as $T_{\mathbb{P}(\mathrm{Ext}^{1}(B,A)^{*}),F}\longrightarrow\mathrm{Ext}^{1}(F,F)$. Notice the fact that $N_{\mathbb{P}(N_{H/\mathbf{M}_{1},E}^{*})/\mathbb{P}(\mathrm{Ext}^{1}(B,A)^{*}),F}$ is contained in $T_{\mathbb{P}(\mathrm{Ext}^{1}(B,A)^{*}),F}$, $\zeta$ has to be in $T_{\mathbb{P}(\mathrm{Ext}^{1}(B,A)^{*}),F}$, this means 
$\zeta=k[1]\circ t\circ l$ for some $t\in\mathrm{Ext}^{1}(B,A)$. For $\zeta$ to be nontrivial and lying in $\mathrm{Ext}^{2}(A,A)$, $t$ has to be nonzero under the cokernel map $(-)[1]\circ e:\mathrm{Ext}^{1}(B,A)\longrightarrow\mathrm{Ext}^{2}(A,A)
$, so
 $t[1]\circ e\neq0$
\end{proof}
\begin{lemma}
The normal space $N_{H/\mathbb{P}^{3}\times(\mathbb{P}^{3})^{*},E}$ can be identified with $\mathrm{Ext}^{2}(A,B)$ under a canonical isomorphism. If $\zeta$ belongs to $N_{H/\mathbb{P}^{3}\times(\mathbb{P}^{3})^{*},E}$ in (4), then $\zeta$ can be completed to the following commutative diagram with $e[1]\circ t+r[1]\circ e\neq0$ in $\mathrm{Ext}^{2}(A,B)$:
\begin{center}
$\begin{CD}
A @>k>> F @>l>> B\\
@VtVV @V\zeta VV @VrVV\\
A[1] @>k[1]>> F[1] @>l[1]>> B[1]
\end{CD}$
\end{center}
\label{ri}
\end{lemma}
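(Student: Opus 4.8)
The plan is to prove the two assertions in turn: first the canonical identification of the one-dimensional normal space with $\mathrm{Ext}^2(A,B)$, and then, using that identification together with the projective-bundle structure of $\mathbf{P}$, to produce the required commutative diagram and establish the nonvanishing.

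For the canonical isomorphism, I would use that $\mathbf{M}_A\times\mathbf{M}_B=\mathbb{P}^3\times(\mathbb{P}^3)^*$ and that, under the standard identifications, $T_{\mathbb{P}^3,p}=\mathrm{Ext}^1(A,A)$ and $T_{(\mathbb{P}^3)^*,V}=\mathrm{Ext}^1(B,B)$, so that $T_{\mathbb{P}^3\times(\mathbb{P}^3)^*,E}=\mathrm{Ext}^1(A,A)\oplus\mathrm{Ext}^1(B,B)$. I would then define the composition map $\Phi\colon\mathrm{Ext}^1(A,A)\oplus\mathrm{Ext}^1(B,B)\longrightarrow\mathrm{Ext}^2(A,B)$, $(t,r)\longmapsto e[1]\circ t+r[1]\circ e$, built from the class $e\in\mathrm{Ext}^1(A,B)$ of $E$. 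By Lemma \ref{diao} the incidence hyperplane $H$ is exactly the locus where $\mathrm{Ext}^1(A,B)\neq0$, and $\Phi$ is the derivative of this incidence condition: a first-order deformation $(t,r)$ of the pair $(A,B)$ is tangent to $H$ if and only if the extension class $e$ lifts to first order, and the obstruction to this lift is $\Phi(t,r)$ by the standard Yoneda-product description of the obstruction to deforming a morphism along deformations of its source and target. Hence $\ker\Phi=T_{H,E}$. Since $H$ is a smooth hypersurface, $T_{H,E}$ has codimension one, so $\Phi$ is nonzero, hence surjective onto $\mathrm{Ext}^2(A,B)=\mathbb{C}$, and therefore descends to an isomorphism $N_{H/\mathbb{P}^3\times(\mathbb{P}^3)^*,E}\xrightarrow{\sim}\mathrm{Ext}^2(A,B)$.

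For the second assertion, let $\zeta$ lie in the summand $N_{H/\mathbb{P}^3\times(\mathbb{P}^3)^*,E}$ of the decomposition (4) of Proposition \ref{miao}. This summand sits inside $T_{\mathbf{P},F}=K_F=\ker\theta_F$ by Corollary \ref{ciyao}, so $\theta_F(\zeta)=l[1]\circ\zeta\circ k=0$. I would complete the diagram using the two triangles of $F$: applying $\mathrm{Hom}(A,-)$ to $A[1]\xrightarrow{k[1]}F[1]\xrightarrow{l[1]}B[1]$ shows that $\zeta\circ k$, being killed by $l[1]$, factors as $\zeta\circ k=k[1]\circ t$ for a unique $t\in\mathrm{Ext}^1(A,A)$ (uniqueness because $\mathrm{Hom}(A,B)=0$ makes $k[1]_*$ injective, by Lemma \ref{diao}); symmetrically, applying $\mathrm{Hom}(-,B[1])$ to $A\xrightarrow{k}F\xrightarrow{l}B$ shows that $l[1]\circ\zeta$, being killed by $k$, factors as $l[1]\circ\zeta=r\circ l$ for a unique $r\in\mathrm{Ext}^1(B,B)$. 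These factorizations are precisely the commutativity of the two squares, so $\zeta$ is completed to the asserted diagram with vertical maps $t,\zeta,r$.

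Finally, to obtain $e[1]\circ t+r[1]\circ e\neq0$, I would identify $(t,r)$ with the image of $\zeta$ under the derivative of the projection $\mathbf{P}\to\mathbb{P}^3\times(\mathbb{P}^3)^*$. Indeed the fiber direction $T_{\mathbb{P}(\mathrm{Ext}^1(B,A)^*),F}$ leaves $A$ and $B$ fixed, so it maps to $(0,0)$, and the uniqueness of $t,r$ forces them to be the base component of $\zeta$, i.e. the classes of the deformations of the sub $A$ and the quotient $B$ of $F$. Since $\zeta$ lies in the pure normal summand $N_{H/\mathbb{P}^3\times(\mathbb{P}^3)^*,E}$, this base component is a nonzero normal vector to $H$, and applying the isomorphism $\Phi$ from the first part gives $\Phi(t,r)=e[1]\circ t+r[1]\circ e\neq0$. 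The main obstacle is the first part: verifying that $\Phi$ really is the derivative of the incidence condition — equivalently that the Yoneda obstruction to deforming $e$ along $(t,r)$ is computed by $e[1]\circ t+r[1]\circ e$ — and matching its kernel with $T_{H,E}$ on the nose, the signs being pinned down by comparison with the classical differential $\langle a,\dot x\rangle+\langle\dot a,x\rangle$ of the incidence pairing.
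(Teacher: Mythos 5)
Your proposal is correct and follows essentially the same route as the paper: both identify $T_{\mathbb{P}^{3}\times(\mathbb{P}^{3})^{*},(A,B)}$ with $\mathrm{Ext}^{1}(A,A)\oplus\mathrm{Ext}^{1}(B,B)$, realize the normal space as the cokernel of $T_{H,E}$ under the map $(t,r)\mapsto e[1]\circ t+r[1]\circ e$ onto $\mathrm{Ext}^{2}(A,B)$, and complete the diagram from $\theta_{F}(\zeta)=0$ via the universal property of the triangle, concluding nonvanishing because $(t,r)$ is a nonzero normal vector. The only cosmetic difference is that the paper extracts the exact sequence $0\to K_{E}\to\mathrm{Ext}^{1}(A,A)\oplus\mathrm{Ext}^{1}(B,B)\to\mathrm{Ext}^{2}(A,B)\to0$ directly from the diagram of Lemma \ref{zheteng} together with Proposition \ref{KS}, whereas you justify the same kernel computation by interpreting the map as the obstruction to deforming the class $e$; both are fine.
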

\begin{proof}
Recall that $K_{E}$ is the kernel of $\theta_{E}$, and by Proposition \ref{KS} it can be identified with $T_{H,E}$ via the Kodaira-Spencer map. From the diagram in Lemma \ref{zheteng}, we have an exact sequence

\medskip
$\begin{CD}
0\longrightarrow K_{E}\longrightarrow\mathrm{Ext}^{1}(A,A)\oplus\mathrm{Ext}^{1}(B,B) @>(e[1]\circ-)+(-[1]\circ e)>> \mathrm{Ext}^{2}(A,B)\longrightarrow0
\end{CD}$.
\medskip

\noindent On the other hand, we have the canonical normal sequence of $H$ embedded in $\mathbb{P}^{3}\times(\mathbb{P}^{3})^{*}$
\begin{equation*}
0\longrightarrow T_{H,E}\longrightarrow T_{\mathbb{P}^{3}\times(\mathbb{P}^{3})^{*},(A,B)}\longrightarrow N_{H/\mathbb{P}^{3}\times(\mathbb{P}^{3})^{*},E}\longrightarrow0,
\end{equation*}
since $\mathrm{Ext}^{1}(A,A)\oplus\mathrm{Ext}^{1}(B,B)$ can also be identified with $T_{\mathbb{P}^{3}\times(\mathbb{P}^{3})^{*},(A,B)}$ via the Kodaira-Spencer map, this induces a canonical isomorphism between $N_{H/\mathbb{P}^{3}\times(\mathbb{P}^{3})^{*},E}$ and $\mathrm{Ext}^{2}(A,B)$.

Notice that $N_{H/\mathbb{P}^{3}\times(\mathbb{P}^{3})^{*},E}$ is contained in $T_{\mathbf{P},F}$ and the latter is kernel of $\theta_{F}$. We have $\theta_{F}(\zeta)=0$. By using the universal property of triangles, $\zeta$ can be completed to a commutative diagram:
\begin{center}
$\begin{CD}
A @>k>> F @>l>> B\\
@VtVV @V\zeta VV @VrVV\\
A[1] @>k[1]>> F[1] @>l[1]>> B[1]
\end{CD}$.
\end{center}
Since $\zeta$ is nontrivial, $(t,r)$ has to be sent to a nonzero element in $\mathrm{Ext}^{2}(A,B)$ under the last map of the exact sequence above, therefore $e[1]\circ t+r[1]\circ e\neq0$.
\end{proof}

With respect to the decomposition $(4)$, we let \begin{equation}
\zeta=u_{1}v_{F}+w_{1}+u_{2}s_{1}+u_{3}s_{2}+u_{4}s_{3}+w_{2}+u_{5}s_{4},
\end{equation}where $w_{1}\in T_{\mathbb{P}(N_{H/\mathbf{M}_{1},E}^{*}),F}$, $\{s_{1},s_{2},s_{3}\}$ forms a basis of $N_{\mathbb{P}(N_{H/\mathbf{M}_{1},E}^{*})/\mathbb{P}(\mathrm{Ext}^{1}(B,A)^{*}),F}$, $w_{2}\in T_{H,E}$, $\{s_{4}\}$ is a basis of $N_{H/\mathbb{P}^{3}\times(\mathbb{P}^{3})^{*},E}$ and $u_{i}\in\mathbb{C}$ are coefficients. $(5)$ is inspired by the explicit basis chosen in the proof of [PS85, Lemma 6]. In the next theorem, we will see that the equations cutting out versal deformations by using (5) is the same as using Piene and Schlessinger's basis in the case of deformations of ideals.
\begin{propo} The quadratic part of Kuranishi map takes the following form with respect to (5)
\begin{equation*}
\kappa_{2}(\zeta)=\zeta\cup\zeta= \sum_{i=1}^{4}u_{1}u_{i+1}(v_{F}+s_{i})\cup(v_{F}+s_{i}),
\end{equation*}
where $\cup$ is the Yoneda pairing of extensions. $\{(v_{F}+s_{i})\cup(v_{F}+s_{i})|i=1,2,3,4\}$ forms a basis of the obstruction space $\mathrm{Ext}^{2}(F,F)$.
\label{gan}
\end{propo}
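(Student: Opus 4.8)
The plan is to compute the Yoneda square $\zeta\cup\zeta$ directly from the expansion (5), using Proposition \ref{mua} to kill almost all of the terms, and then to identify the four surviving classes with a basis of $\mathrm{Ext}^2(F,F)=\mathbb{C}^4$ through the diagram of Proposition \ref{cao}. Since the quadratic part of the Kuranishi map of an object in $\mathrm{D}^{\mathrm{b}}(\mathbb{P}^3)$ is the Yoneda square $\kappa_2(\zeta)=\zeta\cup\zeta$ (by \cite{Ina02}, \cite{Lie06}), I would first polarize: writing $a_0=u_1v_F$, $a_1=w_1$, $a_2=u_2s_1+u_3s_2+u_4s_3$, $a_3=w_2$, $a_4=u_5s_4$ for the five components of $\zeta$ in (5), and setting $B(x,y)=x\cup y+y\cup x$, one has $\zeta\cup\zeta=\sum_i a_i\cup a_i+\sum_{i<j}B(a_i,a_j)$.

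The key input is Proposition \ref{mua}: the directions in $T_{\delta,F}(T_{\mathbf{B},F})$ and in $T_{\varphi_F,F}(T_{\mathbf{P},F})$ are tangent to the smooth spaces $\mathbf{B}$ and $\mathbf{P}$ and hence unobstructed, so $\kappa_2$ vanishes identically on each of these two linear subspaces; polarizing, $B$ vanishes on $T_{\mathbf{B},F}\times T_{\mathbf{B},F}$ and on $T_{\mathbf{P},F}\times T_{\mathbf{P},F}$. Reading off from Proposition \ref{miao} which summands lie where, namely $v_F,w_1,w_2\in T_{\mathbf{B},F}$ and $w_1,s_1,s_2,s_3,w_2,s_4\in T_{\mathbf{P},F}$, every diagonal term $a_i\cup a_i$ vanishes, and the only cross terms $B(a_i,a_j)$ not forced to vanish are those pairing $a_0$ (the $v_F$-direction, which lies only in $T_{\mathbf{B},F}$) with $a_2$ or $a_4$ (which lie only in $T_{\mathbf{P},F}$). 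As $v_F\cup v_F=0=s_i\cup s_i$, each survivor equals $B(v_F,s_i)=(v_F+s_i)\cup(v_F+s_i)$, and collecting the coefficients $u_1u_{i+1}$ yields the stated formula.

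It then remains to show the classes $c_i:=(v_F+s_i)\cup(v_F+s_i)$, $i=1,\dots,4$, form a basis of $\mathrm{Ext}^2(F,F)=\mathbb{C}^4$; this is the main obstacle. I would use the short exact sequence $0\to\mathrm{Ext}^2(F,A)\xrightarrow{k_*}\mathrm{Ext}^2(F,F)\xrightarrow{l_*}\mathrm{Ext}^2(F,B)\to0$ ($\mathbb{C}^3\to\mathbb{C}^4\to\mathbb{C}$) extracted from the middle column of Proposition \ref{cao}, together with the identifications $k^*\colon\mathrm{Ext}^2(F,A)\xrightarrow{\sim}\mathrm{Ext}^2(A,A)$ and $k^*\colon\mathrm{Ext}^2(F,B)\xrightarrow{\sim}\mathrm{Ext}^2(A,B)$ coming from its $\mathrm{Ext}^2$-rows and the vanishing of Lemma \ref{diao}. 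For $i=1,2,3$, the factorization $s_i=k[1]\circ t_i\circ l$ of Lemma \ref{kao}, together with $l\circ k=0$ and $\mathrm{Ext}^2(B,B)=0$, gives $l_*(c_i)=0$, so $c_1,c_2,c_3\in\ker l_*$; moreover $c_i\circ k=k[2]\circ\bigl(t_i[1]\circ\theta_F(v_F)\bigr)$, and since $\theta_F(v_F)$ is a nonzero multiple of $e$ (Remark \ref{mafuyu}(1)), $\{t_i[1]\circ e\}$ is a basis of $\mathrm{Ext}^2(A,A)$ (Lemma \ref{kao}), and $k_*\colon\mathrm{Ext}^2(A,A)\hookrightarrow\mathrm{Ext}^2(A,F)$ is injective, the $c_1,c_2,c_3$ are linearly independent, hence a basis of $\ker l_*=\mathrm{Ext}^2(F,A)$.

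For $i=4$, I would use the completion of $s_4$ from Lemma \ref{ri} (with maps $t\colon A\to A[1]$ and $r\colon B\to B[1]$) and again $\theta_F(v_F)=\mu e$ with $\mu\neq0$, to compute $l[2]\circ c_4\circ k=\mu\,(e[1]\circ t+r[1]\circ e)$, which is nonzero in $\mathrm{Ext}^2(A,B)$ precisely by Lemma \ref{ri}. Thus $l_*(c_4)\neq0$, so $c_4\notin\ker l_*=\mathrm{span}(c_1,c_2,c_3)$, and the four classes form a basis of $\mathrm{Ext}^2(F,F)$. The delicate point throughout is this bookkeeping of the shifts in the Yoneda compositions and the repeated use of the vanishing groups of Lemma \ref{diao} to discard unwanted terms; this is where the explicit basis (5), modeled on [PS85, Lemma 6], earns its keep, since it aligns each surviving class $c_i$ with the canonical identifications of Lemmas \ref{kao} and \ref{ri}.
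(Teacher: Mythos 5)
Your proposal is correct and follows essentially the same route as the paper's proof: the vanishing $v\cup v=0$ on $T_{\mathbf{B},F}$ and $T_{\varphi_F,F}(T_{\mathbf{P},F})$ from Proposition \ref{mua} (which you usefully make explicit via polarization, where the paper only says ``straightforward computation''), then linear independence of the four classes by composing with $k$ and $l$, invoking Lemma \ref{kao}, Lemma \ref{ri}, the vanishing of $\mathrm{Ext}^{2}(B,B)$, and the injectivity/isomorphism statements from the diagram of Proposition \ref{cao}. Your extra care with $\theta_F(v_F)=\mu e$, $\mu\neq 0$, rather than normalizing to $e$, is a harmless refinement of the same argument.
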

\begin{proof}
The equality $\kappa_{2}(\zeta)=\zeta\cup\zeta$ is known for complexes in \cite{Ina02}, \cite{Lie06} and \cite{KLS06}. The second equality is a straightforward computation. It only uses the fact that for any $v$ in $T_{\mathbf{B},F}$ or $T_{\mathbf{P},F}$, we have $v\cup v=0$ since $v$ is a versal deformation by Proposition \ref{mua}.

To prove the last statement, we first show that $\{(v_{F}+s_{i})\cup(v_{F}+s_{i})|i=1,2,3\}$ is linearly independent. If not, then a certain nontrivial linear combination $\sum_{i=1}^{3}a_{i}(v_{F}+s_{i})\cup(v_{F}+s_{i})=0$. We can rewrite it as $v_{F}[1]\circ s+ s[1]\circ v_{F}=0$, where $s=\sum_{i=1}^{3}a_{i}s_{i}$ is a nontrivial first deformation of $F$ in $N_{\mathbb{P}(N_{H,E}^{*})/\mathbb{P}(\mathrm{Ext}^{1}(B,A)^{*}),F}$. By Lemma \ref{kao}, we can write $s=k[1]\circ t\circ l$ for some $t\in \mathrm{Ext}^{1}(B,A)$ such that $t[1]\circ e$ is nonzero in $\mathrm{Ext}^{2}(A,A)$. Now 
\begin{align}
0&=\left(v_{F}[1]\circ s+s[1]\circ v_{F}\right)\circ k\nonumber\\&=v_{F}[1]\circ k[1]\circ t\circ l\circ k+k[2]\circ t[1]\circ l[1]\circ v_{F}\circ k.\nonumber
\end{align}
Since $l\circ k=0$ and $l[1]\circ v_{F}\circ k=\theta_{F}(v_{F})=e$, we have $k[2]\circ t[1]\circ e=0$. From the diagram in Proposition \ref{cao}, we know that $\mathrm{Ext}^{2}(A,A)\overset{k[2]_{*}}{\longrightarrow}\mathrm{Ext}^{2}(A,F)$ is an injection, hence $t[1]\circ e=0$, which is a contradiction.

It only remains to show that $(v_{F}+s_{4})\cup(v_{F}+s_{4})$ is not a linear combination of $\{(v_{F}+s_{i})\cup(v_{F}+s_{i})|i=1,2,3\}$. For this we will show for $i=1,2,3$
\begin{align*}
l[2]\circ\left((v_{F}+s_{i})\cup(v_{F}+s_{i})\right)&=0,\\ l[2]\circ\left((v_{F}+s_{4})\cup(v_{F}+s_{4})\right)&\neq0.
\end{align*}
By Lemma \ref{kao}, we can assume $s_{i}=k[1]\circ t_{i}\circ l$ for some $t_{i}\in\mathrm{Ext}^{1}(B,A)$ satisfying $t_{i}[1]\circ e\neq0$. Then
\begin{align}
& l[2]\circ((v_{F}+s_{i})\cup(v_{F}+s_{i}))\nonumber\\=& l[2]\circ v_{F}[1]\circ k[1]\circ t_{i}\circ l+l[2]\circ k[2]\circ t_{i}[1]\circ l[1]\circ v_{F}.\nonumber
\end{align}
Since $l[2]\circ v_{F}[1]\circ k[1]=e[1]$ and $l[2]\circ k[2]=0$, we have $l[2]\circ((v_{F}+s_{i})\cup(v_{F}+s_{i}))=e[1]\circ t_{i}\circ l$. Notice that $e[1]\circ t_{i}\in\mathrm{Ext}^{2}(B,B)=0$, so $l[2]\circ((v_{F}+s_{i})\cup(v_{F}+s_{i}))=0$. On the other hand, $s_{4}$ is a nontrivial element in $N_{H/\mathbb{P}^{3}\times(\mathbb{P}^{3})^{*},E}$. By Lemma \ref{ri}, $s_{4}$ can be completed to the following commutative diagram with $e[1]\circ t_{4}+r_{4}[1]\circ e\neq0$ in $\mathrm{Ext}^{2}(A,B)$:

\begin{center}
$\begin{CD}
A @>k>> F @>l>> B\\
@Vt_{4}VV @Vs_{4}VV @Vr_{4}VV\\
A[1] @>k[1]>> F[1] @>l[1]>> B[1]
\end{CD}$
\end{center}

\noindent Now
\begin{align}
& l[2]\circ((v_{F}+s_{4})\cup(v_{F}+s_{4}))\circ k\nonumber\\=& l[2]\circ v_{F}[1]\circ s_{4}\circ k+ l[2]\circ s_{4}[1]\circ v_{F}\circ k\nonumber\\=& l[2]\circ v_{F}[1]\circ k[1]\circ t_{4}+r_{4}[1]\circ l[1]\circ v_{F}\circ k\nonumber\\=& e[1]\circ t_{4}+r_{4}[1]\circ e\neq0.\nonumber
\end{align}By the diagram in Proposition \ref{cao}, $k^{*}:\mathrm{Ext}^{2}(F,B)\longrightarrow\mathrm{Ext}^{2}(A,B)$ is an isomorphism, hence $l[2]\circ((v_{F}+s_{4})\cup(v_{F}+s_{4}))\neq0$.
\end{proof}

\begin{corol}
The two irreducible components of $\mathbf{M}_{2}$ intersect transeversely.
\end{corol}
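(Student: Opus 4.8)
The plan is to read transversality straight off the Kuranishi computation of Proposition \ref{gan}, after first pinning down the global geometry. The two irreducible components of $\mathbf{M}_{2}$ are the images $\mathbf{P}'=\varphi_{F}(\mathbf{P})$ and $\mathbf{B}'=\delta(\mathbf{B})$. Since $\mathbf{P}$ and $\mathbf{B}$ are smooth projective and $\varphi_{F}$, $\delta$ are injective on points and on Zariski tangent spaces by Proposition \ref{zhongyao} and Proposition \ref{ruyao}, these maps are proper radicial unramified monomorphisms, hence closed immersions; so $\mathbf{P}'$ and $\mathbf{B}'$ are smooth of dimensions $15$ and $12$. By Proposition \ref{ruyao}(1) their intersection is the image of the exceptional divisor $\mathbb{P}(\mathcal{N}_{H/\mathbf{M}_{1}}^{*})$, which is smooth of dimension $11$.

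Next I would fix a point $F$ in this intersection and invoke deformation theory of complexes: the analytic germ of $\mathbf{M}_{2}$ at $F$ embeds in the smooth space $\mathrm{Ext}^{1}(F,F)\cong\mathbb{C}^{16}$ and is cut out by the Kuranishi map $\kappa\colon\mathrm{Ext}^{1}(F,F)\longrightarrow\mathrm{Ext}^{2}(F,F)$, whose leading term is the quadratic map of Proposition \ref{gan}. In the coordinates $(5)$ and the basis $\{(v_{F}+s_{i})\cup(v_{F}+s_{i})\}_{i=1}^{4}$ of $\mathrm{Ext}^{2}(F,F)$, this leading term reads $\kappa_{2}=(u_{1}u_{2},\,u_{1}u_{3},\,u_{1}u_{4},\,u_{1}u_{5})$. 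The ideal of quadrics they generate is $(u_{1})\cap(u_{2},u_{3},u_{4},u_{5})$, which is radical and cuts out the union of the hyperplane $\{u_{1}=0\}$ with the codimension-$4$ subspace $\{u_{2}=u_{3}=u_{4}=u_{5}=0\}$. By the decomposition in Proposition \ref{miao} these two linear subspaces are exactly $T_{\varphi_{F},F}(T_{\mathbf{P},F})$ and $T_{\delta,F}(T_{\mathbf{B},F})$, and by Remark \ref{mafuyu}(2) they meet in an $11$-dimensional space and therefore span all of $\mathrm{Ext}^{1}(F,F)$.

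To finish I would identify the tangent cone $C_{F}\mathbf{M}_{2}$ with $V(\kappa_{2})$. The containment $C_{F}\mathbf{M}_{2}\subseteq V(\kappa_{2})$ is immediate because $\kappa$ has no linear part and the initial forms of its four components are precisely the quadrics above. For the reverse containment I would use that $\mathbf{M}_{2}=\mathbf{P}'\cup\mathbf{B}'$ set-theoretically with both branches smooth, so that $C_{F}\mathbf{M}_{2}=T_{\mathbf{P}',F}\cup T_{\mathbf{B}',F}$, which is exactly $V(\kappa_{2})$. Hence, as a subgerm of the smooth ambient $\mathrm{Ext}^{1}(F,F)$, the germ of $\mathbf{M}_{2}$ at $F$ is the union of the two smooth branches $\mathbf{P}'$ and $\mathbf{B}'$ whose tangent spaces span the ambient tangent space; this is exactly transversality, and the branches meet along the smooth $11$-dimensional exceptional divisor, as required.

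The step I expect to be the main obstacle is this last identification: a priori the quadratic part $\kappa_{2}$ controls only the tangent cone, and higher-order terms of $\kappa$ could in principle make $V(\kappa)$ differ from the transverse union suggested by $V(\kappa_{2})$. What closes the gap is the independent fact, coming from the simple wall-crossing structure, that $\mathbf{M}_{2}$ is set-theoretically $\mathbf{P}'\cup\mathbf{B}'$ with both components smooth and with the tangent spaces computed in Proposition \ref{miao}. Combining this with the spanning condition $T_{\mathbf{P}',F}+T_{\mathbf{B}',F}=\mathrm{Ext}^{1}(F,F)$ forces the intersection to be transverse of the expected dimension $11$ at every point $F$ of the intersection, which yields the corollary.
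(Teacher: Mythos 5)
Your proposal is correct and follows essentially the same route as the paper: read off from Proposition \ref{gan} that $V(\kappa_{2})$ is the union of the hyperplane $\{u_{1}=0\}$ and the subspace $\{u_{2}=u_{3}=u_{4}=u_{5}=0\}$, match these with $T_{\varphi_{F},F}(T_{\mathbf{P},F})=\mathbb{C}^{15}$ and $T_{\delta,F}(T_{\mathbf{B},F})=\mathbb{C}^{12}$ via Proposition \ref{miao} and Remark \ref{mafuyu}(2), and conclude transversality from $15+12-11=16$. Your extra care in identifying the tangent cone with $V(\kappa_{2})$ (using the set-theoretic decomposition $\mathbf{M}_{2}=\varphi_{F}(\mathbf{P})\cup\delta(\mathbf{B})$ to get the reverse containment) only makes explicit what the paper's terser argument leaves implicit.
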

\begin{proof}
Proposition \ref{gan} shows that $\kappa_{2}^{-1}(0)$ is cut out by equations $u_{1}u_{2},u_{1}u_{3},u_{1}u_{4},u_{1}u_{5}$ in $\mathrm{Ext}^{1}(F,F)$, so all first order deformations that can be lifted to the second order form a $\mathbb{C}^{15}\cup\mathbb{C}^{12}$ satisfying $\mathbb{C}^{15}\cap\mathbb{C}^{12}=\mathbb{C}^{11}$ in $\mathrm{Ext}^{1}(F,F)$. But $T_{\varphi_{F},F}(T_{\mathbf{P},F})\cup T_{\delta,F}(T_{\mathbf{B},F})=\mathbb{C}^{15}\cup\mathbb{C}^{12}$ and $T_{\varphi_{F},F}(T_{\mathbf{P},F})\cap T_{\delta,F}(T_{\mathbf{B},F})=T_{\varphi_{F},F}(T_{\mathbb{P}(\mathcal{N}_{H/\mathbf{M}_{1}}^{*}),F})=\mathbb{C}^{11}$ by Remark \ref{mafuyu} (2), so indeed we have exhibited all versal deformations of $F$ and the two components of $\mathbf{M}_{2}$ intersect transversely.
\end{proof}

We end this section by proving $\mathbf{M}_{2}$ is a projective variety.
\begin{theorem}
The moduli space $\mathbf{M}_{2}$ is a projective variety.
\end{theorem}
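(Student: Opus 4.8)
The plan is to produce $\mathbf{M}_2$ first as a proper algebraic space and then promote it to a projective scheme by descending an ample line bundle along an explicit finite surjection from a projective variety.

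First I would record properness. Since the generalized Bogomolov--Gieseker inequality holds on $\mathbb{P}^3$ (Theorem \ref{GBG}) and $\lambda_2$ lies in the interior of its chamber, there is no strictly $\lambda_2$-semistable object of Chern character $v$; hence by [PT16, Theorem 4.2; Corollary 4.23] the stack $\mathcal{M}_{\lambda_2}(v)$ is a $\mathbb{C}^*$-gerbe over a proper algebraic space, namely $\mathbf{M}_2$. This settles properness, so the real content is to exhibit an ample class.

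Second, I would assemble the two embeddings of Proposition \ref{zhongyao} and Proposition \ref{ruyao} into a single morphism $f=\delta\sqcup\varphi_F\colon \mathbf{B}\sqcup\mathbf{P}\longrightarrow\mathbf{M}_2$. The source is projective, because $\mathbf{B}$ is a blow-up of the smooth projective variety $\mathbf{M}_1$ (Theorem \ref{ritian}) along a smooth centre and $\mathbf{P}$ is a projective bundle over $\mathbb{P}^3\times(\mathbb{P}^3)^*$. By the definition of a simple wall-crossing (Definition \ref{lihai}), every $\lambda_2$-stable object is a nontrivial extension of $B$ by $A$ or of $A$ by $B$, so it lies in the image of $\delta$ or of $\varphi_F$; thus $f$ is surjective. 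As both $\delta$ and $\varphi_F$ are injective on closed points, each fibre of $f$ meets each component of $\mathbf{B}\sqcup\mathbf{P}$ in at most one point, so $f$ is quasi-finite; being proper as well, $f$ is finite. A proper algebraic space receiving a finite surjective morphism from a scheme all of whose fibres lie in affine opens is itself a scheme, and here each fibre lies in a disjoint union of two affine opens (which is affine), so $\mathbf{M}_2$ is a proper scheme.

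Finally, for projectivity I would construct an ample line bundle. Because there are no strictly semistable objects, a quasi-universal family of $\lambda_2$-stable objects exists on $\mathbf{M}_2\times\mathbb{P}^3$, and from it one forms the determinant line bundle $L_{\lambda_2}$ attached to the stability condition $\lambda_2$ in the manner of the positivity lemma of Bayer and Macr\`i. That lemma shows $L_{\lambda_2}$ is nef and is strictly positive on any curve whose members are not S-equivalent; since stability is strict in the chamber of $\mathbf{M}_2$, distinct closed points are never S-equivalent, so $f^*L_{\lambda_2}$ is ample on the projective variety $\mathbf{B}\sqcup\mathbf{P}$. By descent of ampleness along the finite surjective morphism $f$ ([EGA II, 6.6.3]), $L_{\lambda_2}$ is ample, and therefore $\mathbf{M}_2$ is projective. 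The main obstacle is precisely this last step: properness and the scheme structure are essentially formal, whereas projectivity requires genuine positivity input, and one must check that the natural determinant class restricts to an ample class on both irreducible components $\mathbf{B}$ and $\mathbf{P}$ compatibly across the transverse intersection established in the preceding corollary. (Alternatively one could realize $\mathbf{M}_2$ as the Ferrand pushout of the projective schemes $\mathbf{B}$ and $\mathbf{P}$ along the projective locus $\mathbb{P}(\mathcal{N}_{H/\mathbf{M}_1}^{*})$, using transversality to identify its scheme structure, but this route still requires gluing compatible polarizations.)
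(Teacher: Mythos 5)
Your route is genuinely different from the paper's, but it has a gap at exactly the point you yourself identify as the crux. The Bayer--Macr\`{i} positivity lemma gives that the determinant class $L_{\lambda_{2}}$ is nef and has strictly positive degree on every curve whose generic members are not S-equivalent; from this you conclude that $f^{*}L_{\lambda_{2}}$ is ample on $\mathbf{B}\sqcup\mathbf{P}$. That inference is false in general: a strictly nef line bundle on a smooth projective variety need not be ample (Mumford's example on a ruled surface), and Nakai--Moishezon requires positivity of top self-intersections on subvarieties of every dimension, which the positivity lemma does not supply. In Bayer--Macr\`{i}'s own work the passage from the positivity lemma to actual projectivity is the hard step and is carried out on K3 surfaces by a Fourier--Mukai reduction to the Gieseker chamber; no such argument is available off the shelf in the present threefold setting. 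So your projectivity step, as written, asserts rather than proves the conclusion. A secondary issue: the theorem claims $\mathbf{M}_{2}$ is a \emph{variety}, and a finite surjection from a reduced projective scheme does not force the target to be reduced (consider $\mathrm{Spec}\,\mathbb{C}\to\mathrm{Spec}\,\mathbb{C}[\varepsilon]/(\varepsilon^{2})$); reducedness needs a separate argument, which the paper extracts from Proposition \ref{gan} (no non-versal first-order deformation lifts to second order, so $\mathbf{M}_{2}$ is reduced along the intersection locus).

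For comparison, the paper avoids positivity entirely. It establishes reducedness as above, obtains the scheme structure by realizing $\mathbf{M}_{2}$ as the pushout of the closed embeddings $\mathbf{B}\longleftarrow\mathbb{P}(\mathcal{N}_{H/\mathbf{M}_{1}}^{*})\longrightarrow\mathbf{P}$, which exists as a scheme by [SchK05, Lemma 3.9] (essentially your parenthetical Ferrand-pushout alternative), and then deduces projectivity a posteriori from Section 5: the Hilbert scheme $\mathbf{M}_{3}$, which is projective for free, is identified with the blow-up of $\mathbf{M}_{2}$ along a smooth center, and projectivity is transported back down the blow-up. Your properness step via [PT16] and the finite surjection $f=\delta\sqcup\varphi_{F}$ are fine observations (though descending the scheme property along a finite surjection from an algebraic space also needs a citation, e.g.\ to the fact that finite subsets of a projective scheme lie in affine opens), but without a genuine source of ampleness the argument does not close.
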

\begin{proof}
$\mathbf{M}_{2}$ is smooth outside the intersection of its two components by Remark \ref{fadian} and Remark \ref{mafuyu} (1) . For any $F\in\mathbb{P}(\mathcal{N}_{H/\mathbf{M}_{1}}^{*})$, since no first order deformation other than a versal one can be lifted to the second order, $\mathbf{M}_{2}$ is reduced at $F$. This proves $\mathbf{M}_{2}$ is reduced. Now we can view $\mathbf{M}_{2}$ as the pushout of the closed embeddings $\mathbf{B}\longleftarrow\mathbb{P}(\mathcal{N}_{H/\mathbf{M}_{1}}^{*})\longrightarrow \mathbf{P}$. In general a pushout diagram does not exist in the category of schemes, but when the two morphisms are closed embeddings it exists [SchK05, Lemma 3.9]. This proves that $\mathbf{M}_{2}$ is a scheme. The fact that $\mathbf{M}_{2}$ is projective and of finite type comes after the analysis of wall-crossing $(3)$ in the next section, where we prove that $\mathbf{M}_{3}$ is a blow-up of $\mathbf{M}_{2}$ along a smooth center contained in $\varphi_{F}(\mathbf{P})\setminus\delta(\mathbf{B})$. Since $\mathbf{M}_{3}$ is the Hilbert scheme, it is automatically projective and of finite type, so $\mathbf{M}_{2}$ is a projective variety.
\end{proof}
\section{The Third Wall-crossing}
In this section, we study the third wall-crossing and prove $(4)$ in the Main theorem. To be more precise, we will prove the following theorem. Let $V$ be a plane in $\mathbb{P}^{3}$ and $q$ be a point on $V$.
\begin{theorem}
The third wall-crossing is simple with a family of pairs of destabilizing objects $(\mathcal{O}(-1)$, $\mathcal{I}_{q/V}(-3))$.
The moduli space of semistable objects after the wall-crossing is the Hilbert scheme of twisted cubics $\mathbf{M}_{3}$. $\mathbf{M}_{3}$ is also the blow-up of $\mathbf{M}_{2}$ along a $5$-dimensional smooth center contained in $\mathbf{P}\setminus\mathbf{B}$.
\label{zhu2}
\end{theorem}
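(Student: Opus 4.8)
The plan is to mirror the strategy of Section 4, now carrying out the wall-crossing analysis on the $\mathbf{M}_2$ side and combining it with the large volume limit to identify the final moduli space. First I would record the analogue of Lemma \ref{diao} for the new destabilizing pair $(A,B)=(\mathcal{O}(-1),\mathcal{I}_{q/V}(-3))$ with $q\in V$. The expected outcome, matching the numerics of the second wall-crossing, is $\mathrm{Hom}(A,B)=\mathrm{Hom}(B,A)=0$, $\mathrm{Ext}^1(A,B)=\mathbb{C}$ and $\mathrm{Ext}^1(B,A)=\mathbb{C}^{10}$. Together with the classification from [SchB15] that $\mathcal{O}(-1)$ and $\mathcal{I}_{q/V}(-3)$ are the only semistable objects of their Chern characters across this wall and that both are stable, the three conditions of Definition \ref{lihai} follow: the vanishing $\mathrm{Hom}(A,B)=\mathrm{Hom}(B,A)=0$ together with stability forces uniqueness of the sub/quotient pair exactly as before. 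Here $\mathbf{M}_A$ is a reduced point and $\mathbf{M}_B=H$ is the incidence hyperplane, which is smooth of dimension $5$.

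Next I would identify $\mathbf{M}_3$ with the Hilbert scheme. Since the chamber of $\mathbf{M}_3$ is the one adjacent to the large volume limit for $v=\mathrm{ch}(\mathcal{I}_C)$, the large-volume-limit comparison recalled after Proposition $2.5$ identifies $\lambda_3$-stability with Gieseker stability on this Chern character. The Gieseker-stable sheaves with this reduced Hilbert polynomial are precisely the ideal sheaves of one-dimensional subschemes with Hilbert polynomial $3t+1$, so $\mathbf{M}_3\cong M_{\lambda_3}(v)$ is isomorphic to the Hilbert scheme of twisted cubics.

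Then comes the geometric heart. I would identify the modified locus $H'\subset\mathbf{M}_2$ as the family of $E$-type objects fitting in $0\to B\to E\to A\to0$; these are exactly the $\lambda_2$-stable objects destabilized after the wall by Definition \ref{lihai}$(2)$. Because $\mathbf{M}_A$ is a point and $\mathrm{Ext}^1(A,B)=\mathbb{C}$, the parameter space is $H'\cong H$, a smooth $5$-dimensional subvariety, and I would check that it is disjoint from the singular locus $\mathbf{B}\cap\mathbf{P}$, so that $H'\subset\mathbf{P}\setminus\mathbf{B}$: the relevant extensions are plane-cubic-with-point configurations, which live in the smooth component $\mathbf{P}$. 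With $H'$ in hand, I would repeat the elementary-modification construction of Section 4 essentially verbatim: form the blow-up $\mathrm{Bl}_{H'}\mathbf{M}_2$, pull back the universal family, and modify it along the exceptional divisor using the universal surjection onto the $A$-quotient. This produces a flat family of $\lambda_3$-stable objects, the fibers over the exceptional divisor being the reverse extensions $0\to A\to F\to B\to0$, and hence a morphism $\delta':\mathrm{Bl}_{H'}\mathbf{M}_2\to\mathbf{M}_3$, which by the tangent-space injectivity arguments of Proposition \ref{zhongyao} and Proposition \ref{ruyao}, together with bijectivity on points, should be an isomorphism.

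The main obstacle I expect is this last step: pinning down $H'$ inside $\mathbf{M}_2$ and proving that the elementary modification is everywhere $\lambda_3$-stable and that $\delta'$ is an isomorphism rather than merely bijective. Two features make this more tractable than Section 4. First, $H'$ lies entirely in the smooth locus $\mathbf{P}\setminus\mathbf{B}$, so the center of the blow-up avoids the singularities of $\mathbf{M}_2$ and the blow-up is smooth near the exceptional divisor; no transversality-of-two-components analysis is needed at $H'$. Second, since $\mathbf{M}_3$ is already identified with the Hilbert scheme, a bijective morphism that is injective on Zariski tangent spaces can be upgraded to an isomorphism by the same reasoning used at the end of Theorem \ref{ritian}. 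The dimension bookkeeping confirms consistency: blowing up the codimension-$10$ center $H'$ in the $15$-dimensional component $\mathbf{P}$ produces an exceptional $\mathbb{P}^9$-bundle of dimension $14$, which matches the dimension of the reverse-extension family $\mathbb{P}(\mathrm{Ext}^1(B,A))$-bundle over $H$, so the new locus is absorbed as the exceptional divisor and $\mathbf{M}_3$ retains two components of dimensions $12$ and $15$ meeting along the untouched locus $\mathbf{B}\cap\mathbf{P}$ of dimension $11$.
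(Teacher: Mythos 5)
Your overall strategy coincides with the paper's: compute the Ext groups of the new destabilizing pair, identify the modified locus $H'\cong H$ inside $\mathbf{M}_{2}$, blow up, perform the elementary modification, and show the induced morphism $\delta'$ to $\mathbf{M}_{3}$ (identified with the Hilbert scheme via the large volume limit) is an isomorphism. However, two steps are asserted where the paper has to do real work, and at least one of them is a genuine logical gap. First, your justification that $H'\subset\mathbf{P}\setminus\mathbf{B}$ --- ``the relevant extensions are plane-cubic-with-point configurations, which live in the smooth component'' --- is a heuristic, not a proof. The paper proves this by writing the long exact sequences for $\mathrm{Hom}$ in both directions applied to $0\to B\to E\to A\to 0$ with $(A,B)=(\mathcal{O}(-1),\mathcal{I}_{q/V}(-3))$, and checking that the resulting constraints are incompatible with the tangent/obstruction dimensions already computed in Section 4 for the other strata: $\mathrm{Ext}^{1}(E,E)=\mathbb{C}^{12}$ on $\mathbf{B}\setminus\mathbf{P}$ and $\mathrm{Ext}^{1}(E,E)=\mathbb{C}^{16}$, $\mathrm{Ext}^{2}(E,E)=\mathbb{C}^{4}$ on $\mathbf{P}\cap\mathbf{B}$ both violate exactness of the diagram, leaving only $\mathbf{P}\setminus\mathbf{B}$. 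Some version of this computation is needed.

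Second, and more seriously, ``bijective on points and injective on Zariski tangent spaces implies isomorphism'' is false without an upper bound on the tangent spaces of the target: such a morphism is a surjective closed immersion, which still fails to be an isomorphism if $\mathbf{M}_{3}$ is non-reduced or has excess tangent space along the image of the exceptional divisor. The ``bijective morphism between complex manifolds'' argument from Theorem \ref{ritian} is not available here, because you have not established that $\mathbf{M}_{3}$ is smooth at the new points --- and you cannot get this from obstruction vanishing, since $\mathrm{Ext}^{2}(F,F)=\mathbb{C}^{3}\neq 0$ there. The paper closes exactly this gap in Theorem \ref{haoxiang} by computing $\mathrm{Ext}^{1}(F,F)=\mathbb{C}^{15}$ for every reverse extension $0\to A\to F\to B\to 0$ via the long exact sequence diagram; this forces $T_{\delta',F}$ to be an isomorphism onto the full Zariski tangent space of $\mathbf{M}_{3}$, so $\mathbf{M}_{3}$ is smooth of dimension $15$ along the exceptional divisor and $\delta'$ is a local isomorphism there (it is an isomorphism elsewhere because the family is unchanged). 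Your closing dimension bookkeeping counts the dimension of the exceptional divisor, not of $T_{\mathbf{M}_{3},F}$, so it does not substitute for this computation.
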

We fix the family of pairs of destabilizing objects to be
\begin{equation}
\left(A,B\right)=\left(\mathcal{O}(-1),\mathcal{I}_{q/V}(-3)\right),\nonumber
\end{equation}
The method is almost the same with the previouse section, but the situation here is easier since we expect no extra components or singularities occur and $\mathbf{M}_{3}$ is a blow-up of $\mathbf{M}_{2}$ along a smooth center.

The following Hom and Ext group computations are straightforward.
\begin{lemma}
$\mathrm{Hom}(A,B)=\mathrm{Hom}(B,A)=0$, $\mathrm{Hom}(A,A)=\mathrm{Hom}(B,B)=\mathbb{C}$;

$\mathrm{Ext}^{1}(A,B)=\mathbb{C}$, $\mathrm{Ext}^{1}(A,A)=0$, $\mathrm{Ext}^{1}(B,B)=\mathbb{C}^{5}$, $\mathrm{Ext}^{1}(B,A)=\mathbb{C}^{10}$;
 
$\mathrm{Ext}^{2}(A,B)=0$, $\mathrm{Ext}^{2}(B,B)=\mathbb{C}^{2}$, $\mathrm{Ext}^{2}(A,A)=0$, $\mathrm{Ext}^{2}(B,A)=\mathbb{C}$;

$\mathrm{Ext}^{3}(A,B)=\mathrm{Ext}^{3}(A,A)=\mathrm{Ext}^{3}(B,B)=\mathrm{Ext}^{3}(B,A)=0$.
\end{lemma}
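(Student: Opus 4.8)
The plan is to reduce everything to the cohomology of line bundles on $\mathbb{P}^{3}$ and on the plane $V\cong\mathbb{P}^{2}$, using that $A=\mathcal{O}(-1)$ is a line bundle and that $B=\mathcal{I}_{q/V}(-3)$ is supported on $V$. Since twisting by a line bundle is an autoequivalence, a common twist may be dropped; in particular $\mathrm{Ext}^{i}(A,A)=H^{i}(\mathbb{P}^{3},\mathcal{O})$, which is $\mathbb{C}$ for $i=0$ and $0$ otherwise, and $\mathrm{Ext}^{i}(B,B)=\mathrm{Ext}^{i}(\mathcal{I}_{q/V},\mathcal{I}_{q/V})$. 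Because $A$ is locally free, $\mathrm{Ext}^{i}(A,B)=H^{i}(\mathbb{P}^{3},\mathcal{I}_{q/V}(-2))$; and Serre duality on $\mathbb{P}^{3}$, with $\omega_{\mathbb{P}^{3}}=\mathcal{O}(-4)$, gives $\mathrm{Ext}^{i}(B,A)\cong\mathrm{Ext}^{3-i}(A,B(-4))^{*}=H^{3-i}(\mathbb{P}^{3},\mathcal{I}_{q/V}(-6))^{*}$. So three of the four families are controlled by the cohomology of the two twists $\mathcal{I}_{q/V}(-2)$ and $\mathcal{I}_{q/V}(-6)$.

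I would extract these from the structure sequence $0\to\mathcal{I}_{q/V}\to\mathcal{O}_{V}\to\mathcal{O}_{q}\to 0$ twisted by $\mathcal{O}(k)$: since $\mathcal{O}_{q}(k)=\mathcal{O}_{q}$ and $H^{\bullet}(\mathbb{P}^{3},\mathcal{O}_{V}(k))=H^{\bullet}(\mathbb{P}^{2},\mathcal{O}(k))$, the long exact sequence expresses $H^{\bullet}(\mathcal{I}_{q/V}(k))$ through the standard cohomology of $\mathcal{O}_{\mathbb{P}^{2}}(k)$ and the evaluation map $H^{0}(\mathcal{O}_{V}(k))\to H^{0}(\mathcal{O}_{q})$. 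For $k=-2$ every $H^{\bullet}(\mathcal{O}_{\mathbb{P}^{2}}(-2))$ vanishes, yielding $H^{\bullet}(\mathcal{I}_{q/V}(-2))=(0,\mathbb{C},0,0)$ and hence $\mathrm{Hom}(A,B)=0$, $\mathrm{Ext}^{1}(A,B)=\mathbb{C}$, $\mathrm{Ext}^{\ge 2}(A,B)=0$. For $k=-6$ only $H^{2}(\mathcal{O}_{\mathbb{P}^{2}}(-6))=\mathbb{C}^{10}$ survives, giving $H^{\bullet}(\mathcal{I}_{q/V}(-6))=(0,\mathbb{C},\mathbb{C}^{10},0)$; dualizing and reversing degrees produces $\mathrm{Ext}^{\bullet}(B,A)=(0,\mathbb{C}^{10},\mathbb{C},0)$, exactly as stated.

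The substantive part is $\mathrm{Ext}^{\bullet}(B,B)=\mathrm{Ext}^{\bullet}_{\mathbb{P}^{3}}(\mathcal{I}_{q/V},\mathcal{I}_{q/V})$, where $\mathcal{I}_{q/V}$ is neither locally free nor the structure sheaf of its support. Here I would reduce to the surface $V=\mathbb{P}^{2}$. Writing $i\colon V\hookrightarrow\mathbb{P}^{3}$ and resolving $\mathcal{O}_{V}=[\mathcal{O}(-1)\xrightarrow{s}\mathcal{O}]$, one computes $Li^{*}i_{*}\mathcal{I}_{q/V}$ by tensoring this Koszul complex with $i_{*}\mathcal{I}_{q/V}$; since $s$ annihilates any $\mathcal{O}_{V}$-module, the differential vanishes and $Li^{*}i_{*}\mathcal{I}_{q/V}\cong\mathcal{I}_{q/V}\oplus\mathcal{I}_{q/V}(-1)[1]$ splits. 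Adjunction then yields the clean direct sum $\mathrm{Ext}^{i}_{\mathbb{P}^{3}}(\mathcal{I}_{q/V},\mathcal{I}_{q/V})=\mathrm{Ext}^{i}_{V}(\mathcal{I}_{q/V},\mathcal{I}_{q/V})\oplus\mathrm{Ext}^{i-1}_{V}(\mathcal{I}_{q/V},\mathcal{I}_{q/V}(1))$, with no connecting maps. On $\mathbb{P}^{2}$ the ideal of a point has the Koszul resolution $0\to\mathcal{O}(-2)\to\mathcal{O}(-1)^{2}\to\mathcal{I}_{q/V}\to 0$, which turns both surface computations into line-bundle cohomology. Simplicity of $\mathcal{I}_{q/V}$ gives the $\mathrm{Hom}$'s, Serre duality on $\mathbb{P}^{2}$ together with a slope argument kills the top $\mathrm{Ext}^{2}$'s, and the middle terms follow; one obtains $\mathrm{Ext}^{\bullet}_{V}(\mathcal{I}_{q/V},\mathcal{I}_{q/V})=(\mathbb{C},\mathbb{C}^{2},0)$ and $\mathrm{Ext}^{\bullet}_{V}(\mathcal{I}_{q/V},\mathcal{I}_{q/V}(1))=(\mathbb{C}^{3},\mathbb{C}^{2},0)$. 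Assembling the two columns gives $\mathrm{Ext}^{\bullet}(B,B)=(\mathbb{C},\mathbb{C}^{5},\mathbb{C}^{2},0)$.

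The step I expect to be most delicate is this last surface computation, especially the value $\mathrm{Ext}^{1}_{V}(\mathcal{I}_{q/V},\mathcal{I}_{q/V}(1))=\mathbb{C}^{2}$, which is precisely the source of the obstruction space $\mathrm{Ext}^{2}(B,B)$ and must be gotten right for the transversality and singularity analysis downstream. As an independent safeguard I would recompute $\mathrm{Ext}^{\bullet}(B,B)$ directly on $\mathbb{P}^{3}$: splicing $0\to\mathcal{O}(-1)\to\mathcal{I}_{q/\mathbb{P}^{3}}\to\mathcal{I}_{q/V}\to 0$ with the Koszul resolution of the point-ideal $\mathcal{I}_{q/\mathbb{P}^{3}}$ produces a locally free resolution $0\to\mathcal{O}(-3)\to\mathcal{O}(-2)^{3}\oplus\mathcal{O}(-1)\to\mathcal{O}(-1)^{3}\to\mathcal{I}_{q/V}\to 0$. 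Applying $\mathrm{Hom}(-,\mathcal{I}_{q/V})$ and using $H^{\ge 1}(\mathcal{I}_{q/V}(k))=0$ for $k\ge 1$ reduces the computation to the cohomology of an explicit three-term complex $0\to\mathbb{C}^{6}\to\mathbb{C}^{17}\to\mathbb{C}^{9}\to 0$ of multiplication maps, whose cohomology recovers $\mathrm{Hom}(B,B)=\mathbb{C}$ and $\mathrm{Ext}^{2}(B,B)=\mathbb{C}^{2}$ and confirms the table. A final consistency check via the global Euler characteristic $\chi(\mathcal{I}_{q/V},\mathcal{I}_{q/V})=-2$ validates all four families at once.
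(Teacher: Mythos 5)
Your computation is correct, and every entry matches the lemma; I verified in particular $H^{\bullet}(\mathcal{I}_{q/V}(-2))=(0,\mathbb{C},0,0)$, $H^{\bullet}(\mathcal{I}_{q/V}(-6))=(0,\mathbb{C},\mathbb{C}^{10},0)$, and the two surface columns $(\mathbb{C},\mathbb{C}^{2},0)$ and $(\mathbb{C}^{3},\mathbb{C}^{2},0)$ assembling to $\mathrm{Ext}^{\bullet}(B,B)=(\mathbb{C},\mathbb{C}^{5},\mathbb{C}^{2},0)$. There is nothing in the paper to compare against: the author simply declares these computations straightforward and records the answers, so your write-up supplies a proof the paper omits. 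The only step worth a remark is the splitting $Li^{*}i_{*}\mathcal{I}_{q/V}\cong\mathcal{I}_{q/V}\oplus\mathcal{I}_{q/V}(-1)[1]$. Your zero-differential argument is valid (resolving $\mathcal{O}_{V}$ by $[\mathcal{O}(-1)\xrightarrow{s}\mathcal{O}]$ produces a two-term complex of $\mathcal{O}_{V}$-modules with vanishing differential, which is literally the direct sum of its terms in $\mathrm{D}^{\mathrm{b}}(V)$), and as a belt-and-suspenders check the obstruction to splitting the triangle $\mathcal{I}_{q/V}(-1)[1]\to Li^{*}i_{*}\mathcal{I}_{q/V}\to\mathcal{I}_{q/V}$ lives in $\mathrm{Ext}^{2}_{V}(\mathcal{I}_{q/V},\mathcal{I}_{q/V}(-1))$, which vanishes by Serre duality on $\mathbb{P}^{2}$; even without any splitting the resulting long exact sequence has zero connecting maps and yields the same dimensions. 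Your two independent cross-checks (the explicit resolution $0\to\mathcal{O}(-3)\to\mathcal{O}(-2)^{3}\oplus\mathcal{O}(-1)\to\mathcal{O}(-1)^{3}\to\mathcal{I}_{q/V}\to0$ giving the complex $\mathbb{C}^{6}\to\mathbb{C}^{17}\to\mathbb{C}^{9}$, and $\chi(\mathcal{I}_{q/V},\mathcal{I}_{q/V})=-2$) are both consistent with the table, and getting $\mathrm{Ext}^{2}(B,B)=\mathbb{C}^{2}$ right is indeed what matters downstream for Theorem \ref{haoxiang}.
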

Similar to Proposition \ref{dadiao}, the incidence hyperplance $H$ is the moduli space of nontrivial extensions of $A$ by $B$. Similar to Proposition \ref{zhongyao}, we can construct an embedding $\varphi'_{E}:H\longrightarrow \mathbf{M}_{2}$. Since $\mathbf{M}_{2}$ has two irreducible components $\mathbf{B}$ and $\mathbf{P}$, we want to know which component $H$ lies in.
\begin{propo}
Under the induced morphism $\varphi_{E}'$, $H$ is embedded into $\mathbf{P}\setminus\mathbf{B}$.
\end{propo}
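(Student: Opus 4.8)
The plan is to show that every object $E$ in the image of $\varphi'_E$ can be rewritten as a nontrivial extension of the second-wall type, $0\to\mathcal{I}_p(-1)\to E\to\mathcal{O}_V(-3)\to0$, which by the universal property of Proposition \ref{juru} and the embedding $\varphi_F$ places $E$ in $\mathbf{P}$; and then to exclude $E$ from $\mathbf{B}$ by a tangent-space dimension count. Recall from Section 4 that $\mathbf{M}_2=\mathbf{B}\cup\mathbf{P}$ with $\mathbf{B}\cap\mathbf{P}=\mathbb{P}(\mathcal{N}^{*}_{H/\mathbf{M}_1})$, that $\mathbf{B}$ is smooth of dimension $12$, that $\mathbf{P}\setminus\mathbf{B}$ is smooth of dimension $15$ (Remark \ref{fadian}), and that along the intersection $\dim\mathrm{Ext}^1(E,E)=16$ (Proposition \ref{cao}). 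Since these three values are distinct, it suffices to produce the reverse extension and to check that $\dim\mathrm{Ext}^1(E,E)=15$.

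To build the reverse extension I would start from the defining sequence $0\to\mathcal{I}_{q/V}(-3)\xrightarrow{a}E\to\mathcal{O}(-1)\to0$, with class $e\in\mathrm{Ext}^1(\mathcal{O}(-1),\mathcal{I}_{q/V}(-3))=\mathbb{C}$, together with the inclusion $\iota:\mathcal{I}_{q/V}(-3)\hookrightarrow\mathcal{O}_V(-3)$ whose cokernel is the skyscraper $\mathcal{O}_q$. The map $\iota$ lifts to $\psi:E\to\mathcal{O}_V(-3)$ exactly when the obstruction $\iota_*e\in\mathrm{Ext}^1(\mathcal{O}(-1),\mathcal{O}_V(-3))$ vanishes, which I would verify by running $\mathrm{Hom}(\mathcal{O}(-1),-)$ on $0\to\mathcal{I}_{q/V}(-3)\to\mathcal{O}_V(-3)\to\mathcal{O}_q\to0$: since $\mathrm{Hom}(\mathcal{O}(-1),\mathcal{O}_V(-3))=H^0(\mathcal{O}_V(-2))=0$ while $\mathrm{Hom}(\mathcal{O}(-1),\mathcal{O}_q)=\mathbb{C}$, the connecting map $\mathbb{C}\to\mathrm{Ext}^1(\mathcal{O}(-1),\mathcal{I}_{q/V}(-3))=\mathbb{C}$ is injective, hence an isomorphism, so $e$ lies in its image, i.e. in $\ker(\iota_*)$. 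Thus a surjective lift $\psi$ exists (surjectivity follows because nontriviality of $e$ forces the induced $\mathcal{O}(-1)\to\mathcal{O}_q$ to be nonzero). Setting $\mathcal{K}=\ker\psi$, the composite $\mathcal{K}\hookrightarrow E\to\mathcal{O}(-1)$ is injective since its kernel is $\mathcal{K}\cap\mathcal{I}_{q/V}(-3)=\ker\iota=0$, so $\mathcal{K}$ is a rank-one subsheaf of the line bundle $\mathcal{O}(-1)$; comparing Chern characters (both destabilizing pairs sum to the same $v$) forces $\mathcal{K}\cong\mathcal{I}_p(-1)$ for a single reduced point $p$. Finally the double quotient $E/(\mathcal{K}\oplus\mathcal{I}_{q/V}(-3))$ equals both $\mathcal{O}(-1)/\mathcal{I}_p(-1)=\mathcal{O}_p$ and $\mathcal{O}_V(-3)/\mathcal{I}_{q/V}(-3)=\mathcal{O}_q$, whence $p=q\in V$. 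This exhibits $E$ as a reverse extension, so $E\in\mathbf{P}$, lying over $(q,V)\in H$ inside $\mathbb{P}(\mathscr{E}xt^1_{\pi_H}(\mathcal{G}_H,\mathcal{F}_H)^{*})$.

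It then remains to separate $E$ from the intersection $\mathbf{B}\cap\mathbf{P}=\mathbb{P}(\mathcal{N}^{*}_{H/\mathbf{M}_1})$, which I would do by computing $T_{\mathbf{M}_2,E}\cong\mathrm{Ext}^1(E,E)$ directly from the third-wall sequence $0\to\mathcal{I}_{q/V}(-3)\to E\to\mathcal{O}(-1)\to0$, feeding the groups of the preceding lemma into the two long exact sequences for $\mathrm{Hom}$ in the manner of the diagram of Lemma \ref{zheteng}. The expected value is $\dim\mathrm{Ext}^1(E,E)=15$; as $\mathbf{B}$ is smooth of dimension $12$ and the intersection carries $\dim\mathrm{Ext}^1=16$, this is incompatible with $E\in\mathbf{B}$, so $E\in\mathbf{P}\setminus\mathbf{B}$, and the construction is uniform in $(q,V)$ so the whole image of $\varphi'_E$ lands there. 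The main obstacle is the vanishing of the obstruction $\iota_*e$: this is precisely where the hypothesis $q\in V$ enters (so that $\mathcal{O}_q$ sits inside $\mathcal{O}_V(-3)$ and $\mathrm{Ext}^1(\mathcal{O}(-1),\mathcal{I}_{q/V}(-3))=\mathbb{C}$), and it is what guarantees the reverse extension exists at all; a secondary technical point is the Chern-character identification $\mathcal{K}\cong\mathcal{I}_p(-1)$, which relies on $\mathcal{K}$ embedding into the line bundle $\mathcal{O}(-1)$ so that only a single reduced point can absorb the remaining invariants.
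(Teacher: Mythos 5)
Your proposal is correct, but it does more work than the paper and reaches $E\in\mathbf{P}$ by a genuinely different route. The paper's proof never constructs the reverse extension: since $\varphi'_E$ already lands in $\mathbf{M}_2=\mathbf{B}\cup\mathbf{P}$, it suffices to rule out $\mathbf{B}\setminus\mathbf{P}$ and $\mathbf{B}\cap\mathbf{P}$, which the paper does purely by the long exact sequences of $\mathrm{Hom}(-,-)$ applied to $0\to\mathcal{I}_{q/V}(-3)\to E\to\mathcal{O}(-1)\to 0$ (its diagram forces $\mathrm{Ext}^1(E,E)=\mathbb{C}^{15}$ once all boundary maps are seen to vanish, contradicting the values $12$ and $16$ on the two excluded loci). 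Your second half is exactly this argument, so the decisive step coincides; note only that the bare long exact sequence gives $12\le\dim\mathrm{Ext}^1(E,E)\le 15$ a priori, so to exclude the value $12$ you do need the vanishing of the connecting maps (e.g. $\mathrm{Hom}(E,A)\to\mathrm{Ext}^1(E,B)$ kills $j$ because $e\circ j=0$ in the triangle), just as the paper asserts. Your first half — lifting $\iota:\mathcal{I}_{q/V}(-3)\hookrightarrow\mathcal{O}_V(-3)$ through $E$ after checking $\iota_*e=0$, and identifying $\ker\psi\cong\mathcal{I}_q(-1)$ — is sound and buys something the paper's exclusion argument does not: it exhibits the exact point of $\mathbf{P}$ over $(q,V)\in H$ that $E$ maps to, rather than merely knowing $E$ lies in $\mathbf{P}$. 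The one small point you should add there is that the resulting extension $0\to\mathcal{I}_q(-1)\to E\to\mathcal{O}_V(-3)\to 0$ is nontrivial (immediate from the $\lambda_2$-stability, hence indecomposability, of $E$), since $\mathbf{P}$ parametrizes only nontrivial extensions. With that remark included, both halves of your argument go through.
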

\begin{proof}
Take any $E\in H$, we have a nontrivial extension $0\longrightarrow B\longrightarrow E\longrightarrow A\longrightarrow0$.
 By using long exact sequences for $\mathrm{Hom}$ functor, we get the following commutative diagram with exact rows and columns, and all boundary homomorphims are $0$.

\begin{center}
$\begin{CD}
\mathrm{Ext}^{1}(A,B)=\mathbb{C} @>0>> \mathrm{Ext}^{1}(E,B)=\mathbb{C}^{5} @>>> \mathrm{Ext}^{1}(B,B)=\mathbb{C}^{5}\\
@V0VV @VVV @VVV\\
\mathrm{Ext}^{1}(A,E)=0 @>>> \mathrm{Ext}^{1}(E,E) @>>> \mathrm{Ext}^{1}(E,B)\\
@VVV @VVV @VVV\\
\mathrm{Ext}^{1}(A,A)=0 @>>> \mathrm{Ext}^{1}(E,A)=\mathbb{C}^{10} @>>> \mathrm{Ext}^{1}(B,A)=\mathbb{C}^{10}\\
@VVV @VVV @VVV\\
\mathrm{Ext}^{2}(A,B)=0 @>>> \mathrm{Ext}^{2}(E,B)=\mathbb{C}^{2} @>>> \mathrm{Ext}^{2}(B,B)=\mathbb{C}^{2}\\
@VVV @VVV @VVV\\
0 @>>> \mathrm{Ext}^{2}(E,E) @>>> \mathrm{Ext}^{2}(B,E)\\
@VVV @VVV @VVV\\
0 @>>> \mathrm{Ext}^{2}(E,A)=\mathbb{C} @>>> \mathrm{Ext}^{2}(B,A)=\mathbb{C}\\
\end{CD}$
\end{center}

\noindent If $E\in \mathbf{B}\setminus\mathbf{P}$, then $\mathrm{Ext}^{1}(E,E)=\mathbb{C}^{12}$, but this violates the exactness of the central column of the above diagram. If $E\in \mathbf{P}\cap\mathbf{B}$, then by Proposition $4.9$ we have $\mathrm{Ext}^{1}(E,E)=\mathbb{C}^{16}$ and $\mathrm{Ext}^{2}(E,E)=\mathbb{C}^{4}$, which also does not fit into the above diagram. Hence $E\in \mathbf{P}\setminus\mathbf{B}$.
\end{proof}
\begin{remark}
This proposition means that the third wall-crossing only modifies one irreducible component of $\mathbf{M}_{2}$, namely $\mathbf{P}$. It does not touch the other component $\mathbf{B}$.
\end{remark}

On the other hand, we can construct a morphism $\varphi_{F}':\mathbf{P}'\longrightarrow \mathbf{M}_{3}$ that is injective on the level of sets and Zariski tangent spaces, where $\mathbf{P}'$ is a $\mathbb{P}^{9}$-bundle over $H$ parametrizing all nontrivial extensions of $B$ by $A$. This implies that for any $F$ in the image of $\varphi_{F}'$, $\mathrm{Ext}^{1}(F,F)$ is at least $14$-dimensional since $\mathrm{dim}\mathbf{P}'=14$ and $\mathbf{P}'$ is smooth.

If we denote the blow-up of $\mathbf{M}_{2}$ along $H$ by $\mathbf{B}'$, then we can perform the elementary modification on the pullback of the universal family over $\mathbf{M}_{2}$ along the exceptional divisor of $\mathbf{B}'$ to get a flat family $\mathcal{K}'$. Similar to Proposition \ref{ruyao}, $\mathcal{K}'$ induces a morphism $\delta':\mathbf{B}'\longrightarrow \mathbf{M}_{3}$ which is injective on the level of sets and Zariski tangent spaces.
\begin{theorem}
The induced morphism $\delta'$ is an isomorphism.
\label{haoxiang}
\end{theorem}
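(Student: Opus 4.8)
The plan is to run the same machine as in Section~4, but to exploit the fact that here the modified locus $H$ sits inside the smooth locus $\mathbf{P}\setminus\mathbf{B}$ of $\mathbf{M}_2$ and has codimension exactly $10=\dim\mathrm{Ext}^1(B,A)$ there, so that the normal bundle of $H$ fills up the whole reverse-extension bundle. First I would record that $\mathbf{B}'$ is reduced: $\mathbf{M}_2$ is reduced (by the last theorem of Section~4) and $H$ lies in its smooth locus, so the blow-up $\mathbf{B}'$ along the smooth center $H$ is reduced, with exceptional divisor $E'=\mathbb{P}(\mathcal{N}_{H/\mathbf{M}_2}^{*})$ a $\mathbb{P}^9$-bundle over $H$. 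The overall strategy is to show that $\delta'$ is a surjective proper monomorphism, hence a closed immersion onto the reduced scheme $\mathbf{M}_3$, and therefore an isomorphism.

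The key geometric step is the analog of Proposition~\ref{ruyao}~(1) and Proposition~\ref{duodiao}: via the elementary modification defining $\mathcal{K}'$ and the Kodaira--Spencer identification, I would show $\mathcal{N}_{H/\mathbf{M}_2}\cong\mathscr{E}xt^1_{\pi_H}(\mathcal{G}_H,\mathcal{F}_H)$ as a full rank-$10$ bundle --- in contrast to Section~4, where the normal bundle was only a rank-$5$ subbundle, which is precisely why a second component appeared there. Consequently $E'=\mathbb{P}(\mathcal{N}_{H/\mathbf{M}_2}^{*})$ is identified with the entire reverse-extension space $\mathbf{P}'$, and $\delta'|_{E'}$ coincides with $\varphi_F'$. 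Surjectivity of $\delta'$ then follows from Definition~\ref{lihai}: every object parametrized by $\mathbf{M}_3$ is either unchanged from $\mathbf{M}_2\setminus H$ (and so lies in the image of $\mathbf{B}'\setminus E'$, where $\mathcal{K}'$ agrees with the universal family of $\mathbf{M}_2$) or is a reverse extension $0\to A\to F\to B\to 0$ lying in $\mathrm{im}(\varphi_F')=\delta'(E')$.

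Finally I would upgrade the bijection to an isomorphism. Since both spaces are proper over $\mathbb{C}$, $\delta'$ is proper; it is injective on closed points and on Zariski tangent spaces, so over $\mathbb{C}$ it is unramified and universally injective, and the diagonal criterion (the diagonal is simultaneously an open immersion and surjective, hence an isomorphism) makes $\delta'$ a monomorphism, hence a closed immersion. A surjective closed immersion onto a reduced target is an isomorphism, so it remains only to see that $\mathbf{M}_3$ is reduced; this holds because $\mathbf{M}_3$ is the Hilbert scheme of twisted cubics, or may be re-derived intrinsically by repeating the Kuranishi computation of Proposition~\ref{gan} to show that no non-versal first-order deformation lifts to second order. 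I expect the main obstacle to be the full-rank identification $\mathcal{N}_{H/\mathbf{M}_2}\cong\mathscr{E}xt^1_{\pi_H}(\mathcal{G}_H,\mathcal{F}_H)$ together with $\delta'|_{E'}=\varphi_F'$ --- that is, showing that the exceptional divisor is exactly the reverse-extension family --- since this is what forces $\mathbf{M}_3$ to acquire no new component beyond the blow-up and is the crux distinguishing this wall-crossing from the previous one.
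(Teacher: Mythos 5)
Your overall skeleton matches the paper's: $\delta'$ is an isomorphism away from the exceptional divisor because $\mathcal{K}'$ agrees with the universal family there, the exceptional divisor is identified with the reverse-extension family via the elementary modification, and surjectivity comes from Definition~\ref{lihai}. Your full-rank identification $\mathcal{N}_{H/\mathbf{M}_2}\cong\mathscr{E}xt^1_{\pi_H}(\mathcal{G}_H,\mathcal{F}_H)$ is correct and does follow from the analog of Proposition~\ref{duodiao} (a fiberwise injection of bundles of equal rank $10=\dim\mathrm{Ext}^1(B,A)=\operatorname{codim}(H,\mathbf{P})$ is an isomorphism), and it is a clean way to see why no second component appears. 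Where you diverge is the finishing move: the paper does not pass through the proper-monomorphism criterion, but instead computes directly, via the long exact sequences attached to $0\to A\to F\to B\to 0$, that $\mathrm{Ext}^1(F,F)=\mathbb{C}^{15}=\dim T_{\mathbf{B}',F}$ for every $F$ in the exceptional divisor; since $\mathbf{B}'$ is smooth of dimension $15$ near $E'$ and $T_{\delta',F}$ is injective, $\delta'$ is then a bijective morphism that is bijective on tangent spaces, hence an isomorphism (and $\mathbf{M}_3$ is smooth, so reduced, along $\delta'(E')$ for free).

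The one genuine soft spot in your version is exactly the reducedness of $\mathbf{M}_3$ that your closed-immersion argument requires. Citing ``$\mathbf{M}_3$ is the Hilbert scheme, hence reduced'' is circular here: reducedness of the Hilbert scheme of twisted cubics is part of the Piene--Schlessinger theorem the paper is reproving (projectivity is elementary, reducedness is not). Your fallback --- redoing the Kuranishi computation --- would work, but it presupposes the computation of $\mathrm{Ext}^1(F,F)$ (and in fact once you know $\mathrm{Ext}^1(F,F)=\mathbb{C}^{15}$ you need no Kuranishi argument at all, since the tangent space dimension then equals the dimension of the image and $\mathbf{M}_3$ is regular at $F$). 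So the computation you identify as a side remark is actually the crux, and the step you flag as the ``main obstacle'' (the normal-bundle identification) is the easier half. With the $\mathrm{Ext}^1(F,F)=\mathbb{C}^{15}$ diagram supplied, your argument closes up and is a valid, slightly more scheme-theoretic alternative to the paper's complex-manifold-style conclusion.
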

\begin{proof}
$\mathcal{K}'$ is the same as the universal family over $\mathbf{M}_{2}$ outside the exceptional divisor, so $\delta'$ is an isomorphism outside the exceptional divisor. For any $F$ lying in the exceptional divisor, $\delta'$ induces an injection $T_{\mathbf{B}',F}\longrightarrow\mathrm{Ext}^{1}(F,F)=T_{\mathbf{M}_{3},F}$. To prove $\delta'$ is an isomorphism at $F$, we only need to show $\mathrm{Ext}^{1}(F,F)=\mathbb{C}^{15}=T_{\mathbf{B}',F}$. Since we have an exact sequence $0\longrightarrow A\longrightarrow F\longrightarrow B\longrightarrow0$, this can be done by writing down the long exact sequences for $\mathrm{Hom}$ functor again.

\begin{center}
$\begin{CD}
\mathrm{Ext}^{1}(B,A)=\mathbb{C}^{10} @>>> \mathrm{Ext}^{1}(F,A)=\mathbb{C}^{9} @>>> \mathrm{Ext}^{1}(A,A)=0\\
@VVV @VVV @VVV\\
\mathrm{Ext}^{1}(B,F)=\mathbb{C}^{14} @>>> \mathrm{Ext}^{1}(F,F)=\mathbb{C}^{15} @>>> \mathrm{Ext}^{1}(A,F)=\mathbb{C}\\
@VVV @VVV @VVV\\
\mathrm{Ext}^{1}(B,B)=\mathbb{C}^{5} @>>> \mathrm{Ext}^{1}(F,B)=\mathbb{C}^{6} @>>> \mathrm{Ext}^{1}(A,B)=\mathbb{C}\\
@V0VV @V0VV @VVV\\
\mathrm{Ext}^{2}(B,A)=\mathbb{C} @>>> \mathrm{Ext}^{2}(F,A)=\mathbb{C} @>>> \mathrm{Ext}^{2}(A,A)=0\\
@VVV @VVV @VVV\\
\mathrm{Ext}^{2}(B,F)=\mathbb{C}^{3} @>>> \mathrm{Ext}^{2}(F,F)=\mathbb{C}^{3} @>>> \mathrm{Ext}^{2}(A,F)=0\\
@VVV @VVV @VVV\\
\mathrm{Ext}^{}(B,B)=\mathbb{C}^{2} @>>> \mathrm{Ext}^{2}(F,B)=\mathbb{C}^{2} @>>> \mathrm{Ext}^{2}(A,B)=0
\end{CD}$
\end{center}
\end{proof}


\begin{thebibliography}{9999999}

\bibitem[AP06]{AP06} D. Abramovich and A. Polishchuk. Sheaves of t-structures and valuative criteria for stable complexes. \emph{J. reine. angew. Math.} {\bf 590}, 89--€"130, 2006.


\bibitem[BMS14]{BMS14} A. Bayer, E. Macr\`{i} and P. Stellari. The Space of Stability Conditions on Abelian Threefolds, and on some Calabi-Yau Threefolds, arXiv:1410.1585.

\bibitem[BMT14]{BMT14} A. Bayer, E. Macr\`{i} and Y. Toda. Bridgeland Stability on threefolds I: Bogomolov-Gieseker type inequalities. \textit{J. Algebraic Geom.} {\bf 23}, 117--163, 2014.

\bibitem[Bon89]{Bon89} A.I. Bondal, Representation of associative algebras and coherent sheaves, \emph{Izv. Akad. Nauk SSSR Ser. Mat.} {\bf 53}, 25--€"44, 1989.

\bibitem[Bri07]{Bri07} T. Bridgeland. Stablity conditions on triangulated categories. \textit{Ann. of Math.} {\bf 166}, 317--345, 2007.

\bibitem[Bri08]{Bri08} T. Bridgeland. Stability conditions on $K3$ surfaces. \textit{Duke Math. J.} {\bf 141}, 241--291, 2008.

\bibitem[EPS87]{EPS87} G. Ellingsrud, R. Piene and S.A. Str{\o}mme. On the variety of nets of quadrics defining twisted cubics. \emph{Space Curves (Rocca di Papa, 1985)}, 84-96. Lecture Notes in Math. {\bf 1266}. Springer, 1987.

\bibitem[GHS16]{GHS16} P. Gallardo, C. Lozano Huerta and B. Schmidt. Families of elliptic curves in $\mathbb{P}^{3}$ and Bridgeland stability. \emph{In preparation}.


\bibitem[Ina02]{Ina02} M. Inaba. Toward a definition of moduli of complexes of coherent sheaves on a projective scheme. \emph{J. Math. Kyoto Univ.} {\bf 42}, 317--329, 2002.

\bibitem[Kin94]{Kin94} A.D. King. Moduli of representations of finite-dimensional algebras. \emph{Quart. J. Math.} {\bf 45}, 515--530, 1994.

\bibitem[KLS06]{KLS06} D. Kaledin, M. Lehn and C. Sorger. Singular simplectic moduli spaces. \textit{Invent. Math.} {\bf 164}, 591--614, 2006.

\bibitem[Lan83]{Lan83} H. Lange. Universal families of extensions. \textit{Journal of Algebra} {\bf 83}, 101--113, 1983.

\bibitem[Lie06]{Lie06} M. Lieblich. Moduli of complexes on a proper morphism. \emph{J. Algebraic Geom.} {\bf 15}, 175--206, 2006.

\bibitem[LLMS16]{LLMS16} M. Lahoz, M. Lehn, E. Macr\`{i} and P. Stellari. Generalized twisted cubics on a cubic fourfold as a moduli space of stable objects. \emph{In preparation}.


\bibitem[Mac14]{Mac14} E. Macr\`{i}. A generalized Bogomolov-Gieseker inequality for the three-dimensional projective space. \textit{Algebra Number Theory} {\bf 8}, 173--190, 2014.

\bibitem[PS85]{PS85} R. Piene and M. Schlessinger. On the Hilbert scheme compactification of the space of twisted cubics. \textit{Amer. J. Math.} {\bf 107}, 761--774, 1985.

\bibitem[PT16]{PT16} D. Piyaratne and Y. Toda. Moduli of Bridgeland semistable objects on 3-folds and Donaldson-Thomas invariants, arxiv:1504.01177.


\bibitem[Tra16]{Tra16} R. Tramel. New stability conditions on surfaces and new Castelnuovo-type inequalities for curves on
complete-intersection surfaces. \emph{Ph.D thesis}. University of Edinburgh. 2015.

\bibitem[SchK05]{SchK05} K. Schwede, Gluing schemes and a scheme without closed points, \emph{Contemp. Math.} {\bf 386}, 157-172, 2005.

\bibitem[SchB15]{SchB15} B. Schmidt. Bridgeland stability on threefolds - some wall crossings, arXiv:1509.04608.
 
\bibitem[Tod09]{Tod09} Y. Toda. Limit stable objects on Calabi-Yau 3-folds. \textit{Duke Math J.} {\bf 149}, 157--208, 2009.

\end{thebibliography}
\end{document}